\documentclass[english,reqno]{amsart}

\usepackage{amsmath, appendix}
\usepackage{amssymb, color}
\usepackage[margin=1.05in, marginparwidth=0.8in]{geometry}
\usepackage{mathrsfs}
\usepackage{bbm}
\usepackage{graphicx}
\usepackage{float}
\usepackage{epsf}
\usepackage{titletoc}
\usepackage{subcaption}
	\captionsetup[subfloat]{labelfont=normalfont}
	\captionsetup{width=\linewidth}
\usepackage{enumitem}

\usepackage{subeqnarray}
\usepackage{cases}

\usepackage{mdframed}
\usepackage{xcolor}

\usepackage{hyperref}
\hypersetup{
	final,
    colorlinks=true,
    linkcolor=blue,
    filecolor=magenta,
    urlcolor=cyan,
}

\usepackage{graphicx} 
\usepackage{tikz}
\usetikzlibrary{decorations.pathreplacing}
\usetikzlibrary{fadings}
\usepackage{tikz-cd}
	\usetikzlibrary{arrows,intersections,calc,patterns,angles,quotes}
	\usetikzlibrary{shapes.geometric}
	\definecolor{Blau}{RGB}{0,101,189} 
	\definecolor{BlauDunkel}{RGB}{0,82,147} 
	\definecolor{BlauHell}{RGB}{152,198,234} 
	\definecolor{BlauMittel}{RGB}{100,160,200} 
	\definecolor{Elfenbein}{RGB}{218,215,203} 
	\definecolor{Gruen}{RGB}{162,173,0} 
	\definecolor{Orange}{RGB}{227,114,34} 
	\definecolor{Grau}{gray}{0.6} 

\usepackage{algorithm}
\usepackage{algpseudocode}

\AtBeginDocument{%
   \def\MR#1{}
   \def\note#1{}
}

\numberwithin{equation}{section}

\newtheorem{lem}{Lemma}[section]
\newtheorem{thm}{Theorem}[section]

\newtheorem{remark}[thm]{Remark}
\newtheorem{assum}[thm]{Assumption}

\theoremstyle{remark}

\newcommand{\Id}{{\mathrm{Id}}}

\renewcommand{\tilde}{\widetilde}

\newcommand{\CH}{{\mathcal{H}}}

\newcommand{\mc}[1]{\mathcal{#1}}

\newcommand{\EE}{\mathbb{E}}
\newcommand{\RR}{\mathbb{R}}
\newcommand{\NN}{\mathbb{N}}
\newcommand{\PP}{\mathbb{P}}

\newcommand{\OV}{\overline{V}}
\newcommand{\OX}{\overline{X}}

\newcommand{\OY}{\overline{Y}}

\newcommand{\TE}{\mathcal{E}}

\newcommand{\CP}{\mathcal{P}}
\newcommand{\CB}{\mathcal{B}}
\newcommand{\CC}{\mathcal{C}}
\newcommand{\CT}{\mathcal{T}}

\DeclareMathOperator*{\relu}{ReLU}
\providecommand{\Id}{\mathrm{Id}}

\newcommand{\abs}[1]{\left\lvert#1\right\rvert}
\newcommand{\absnormal}[1]{\lvert#1\rvert}
\newcommand{\absbig}[1]{\big\lvert#1\big\rvert}
\newcommand{\norm}[1]{\left\lVert#1 \right\rVert}
\newcommand{\normnormal}[1]{\lVert#1 \rVert}

\DeclareMathOperator\diag{diag}

\providecommand{\argmin}{\operatorname*{\arg\min}}
\providecommand{\supp}{\operatorname*{\mathrm{supp}}}



\newif\ifunsure

\unsuretrue

\newif\ifrevised

\revisedtrue

\usepackage[foot]{amsaddr}
\author{Hui Huang}
\email{hui.huang1@ucalgary.ca}
\author{Jinniao Qiu}
\email{jinniao.qiu@ucalgary.ca}
\address[Hui Huang, Jinniao Qiu]{Department of Mathematics and Statistics, University of Calgary, Calgary, Canada}

\author{Konstantin Riedl}
\address[Konstantin Riedl]{Department of Mathematics, Technical University of Munich, Munich, Germany}
\email{konstantin.riedl@ma.tum.de}

\begin{document}
\title[Global Convergence of Particle Swarm Optimization]{On the Global Convergence of \\Particle Swarm Optimization Methods}
\maketitle
\begin{abstract}
	\noindent
	In this paper we provide a rigorous convergence analysis for the renowned particle swarm optimization method by using tools from stochastic calculus and the analysis of partial differential equations. 
	Based on a time-continuous formulation of the particle dynamics as a system of stochastic differential equations, we establish convergence to a global minimizer of a possibly nonconvex and nonsmooth objective function in two steps. 
	First, we prove consensus formation of an associated mean-field dynamics by analyzing the time-evolution of the variance of the particle distribution. 
	We then show that this consensus is close to a global minimizer by employing the asymptotic Laplace principle and a tractability condition on the energy landscape of the objective function. 
	These results allow for the usage of memory mechanisms, and hold for a rich class of objectives provided certain conditions of well-preparation of the hyperparameters and the initial datum. 
	In a second step, at least for the case without memory effects, we provide a quantitative result about the mean-field approximation  of particle swarm optimization, which specifies the convergence of the interacting particle system to the associated mean-field limit. 
	Combining these two results allows for global convergence guarantees of the numerical particle swarm optimization method with provable polynomial complexity. 
	To demonstrate the applicability of the method we propose an efficient and parallelizable implementation, which is tested in particular on a competitive and well-understood high-dimensional benchmark problem in machine learning. 
\end{abstract}\vspace{0.4cm}

{\noindent\small{\textbf{Keywords:} global optimization, high-dimensional optimization, derivative-free optimization, nonconvexity, metaheuristics, particle swarm optimization, mean-field limit, Vlasov-Fokker-Planck equations}}\vspace{0.2cm}

{\noindent\small{\textbf{AMS subject classifications:} 65C35, 65K10, 90C26, 90C56, 35Q90, 35Q83}}\vspace{0.4cm}

\section{Introduction}
\noindent
%

\noindent
In nature, collective behavior and self-organization allow complicated global patterns to emerge from simple interaction rules and random fluctuations.
Inspired by the fascinating capabilities of swarm intelligence, large multi-agent systems are employed as a tool for solving challenging problems in applied mathematics.
One classical task arising throughout science is concerned with the global optimization of a problem-dependent possibly nonconvex and nonsmooth objective function $\TE:\RR^d\to \RR$, i.e., the search for a global optimizer
\begin{equation} \label{optimization}
	x^*\in\argmin_{x\in \RR^d} \TE(x).
\end{equation}
A popular class of methods with a long history of achieving state-of-the-art performance on such problems are metaheuristics~\cite{dreo2006metaheuristics}.
They orchestrate an interplay between local and global improvement procedures, consider memory mechanisms and selection strategies, and combine random and deterministic decisions, to create a process capable of escaping local optima and performing a robust search of the solution space in order to find a global optimizer.
Initiated by seminal works on stochastic approximation~\cite{randomsearch} and random search~\cite{rastrigin63}, a big variety of such mechanisms has been introduced, analyzed and applied to numerous real-world problems.
A non-exclusive list of representatives includes evolutionary programming~\cite{fogel2006evolutionary}, genetic algorithms~\cite{holland1992adaptation}, simulated annealing~\cite{aarts1989simulated}, and particle swarm optimization~\cite{kennedy1995particle}.
Despite their tremendous empirical success, it is very difficult to provide a theoretical convergence analysis to global minimizers, mostly due to their stochastic nature and the appearance of memory effects.

In this paper we study particle swarm optimization~(PSO), which was initially introduced by Kennedy and Eberhart in the 90s~\cite{kennedy1995particle,kennedy1997particle} and is now widely recognized as an efficient method for tackling complex optimization problems~\cite{poli2007particle,lin2008particle}.
Originally, PSO solves~\eqref{optimization} by considering a group of finitely many particles, which explore the energy landscape of~$\TE$.
Each agent experiences a force towards its own personal (historical) best position as well as towards the global best position communicated in the swarm.
Although these interaction rules are seemingly simple, a complete numerical analysis of PSO is still lacking; see, e.g., \cite{witt2011theory,panigrahi2011handbook,zhang2015comprehensive} and references therein.
Recently, however, by introducing a continuous description of PSO based on a system of stochastic differential equations~(SDEs), the authors of~\cite{grassi2020particle} have paved the way for a rigorous mathematical analysis using tools from stochastic calculus and the analysis of partial differential equations~(PDEs).

To explore the domain and to form a global consensus about the minimizer $x^*$ as time passes, the formulation of PSO proposed by the authors of~\cite{grassi2020particle} uses $N$ particles, described by triplets $\big((X_t^i,Y_t^i,V_t^i)_{t\geq0}\big)_{i=1\dots,N}$, with $X_t^i$ and $V_t^i$ denoting the position and velocity, and $Y_t^i$ being a regularized version of the local (historical) best position of the $i$th agent at time~$t$.
The particles, formally stochastic processes, are initialized independently according to some common distribution~$f_0\in\mc{P}(\RR^{3d})$.
In the most general form the PSO dynamics is given by the system of SDEs, expressed in It\^o's form as
\begin{subequations} \label{eq:PSO_with_memory}
\begin{align}
	dX_t^i   &= V_t^i \,dt, \label{eq:PSO_with_memory_X}\\
	dY_{t}^i &= \kappa \left(X_{t}^i-Y_{t}^i\right) S^{\beta,\theta}\!\left(X_{t}^i, Y_{t}^i\right) dt, \label{eq:PSO_with_memory_Y}\\
	m\,dV_{t}^i &= \begin{aligned}[t] \label{eq:PSO_with_memory_V}
		 &\!-\gamma V_{t}^i \,dt
		    + \lambda_{1}\!\left(Y_{t}^i-X_{t}^i\right) dt
		    +\lambda_{2}\!\left(y_{\alpha}(\widehat{\rho}_{Y,t}^N)-X_{t}^i\right) dt \\
		 &\!+\sigma_{1} D\!\left(Y_{t}^i-X_{t}^i\right) d B_{t}^{1,i}
		    +\sigma_{2} D\!\left(y_{\alpha}(\widehat{\rho}_{Y,t}^N)-X_{t}^i\right) d B_{t}^{2,i},
	\end{aligned}
\end{align}
\end{subequations}
where $\alpha,\beta,\theta,\kappa, \gamma,m,\lambda_1,\lambda_2,\sigma_1,\sigma_2\geq 0$ are user-specified parameters.
The change of the velocity in \eqref{eq:PSO_with_memory_V} is subject to five forces.
The first term on the right-hand side models friction with a coefficient commonly chosen as $\gamma=1-m\geq 0$, where $m>0$ denotes the inertia weight.
The subsequent term can be regarded as the drift towards the local best position of the $i$th particle.
A time-continuous approximation of its evolution is given by $Y^i_t$ and described in Equation~\eqref{eq:PSO_with_memory_Y}.
It involves the operator $S^{\beta,\theta}$, given by $S^{\beta,\theta}(x,y)=1+\theta+\tanh(\beta(\TE(y)-\TE(x)))$ for $0\leq\theta\ll1$ and $\beta\gg1$, which converges to the Heaviside function as $\theta\to0$ and $\beta\to\infty$.
The concept behind Equation~\eqref{eq:PSO_with_memory_Y} can then be seen when being discretized, see Remark~\ref{rem:local_best}.
For an alternative implementation of the local best position we refer to~\cite{TW}.
\begin{remark} \label{rem:local_best}
	A time-discretization of~\eqref{eq:PSO_with_memory_Y} with $\kappa=1/(2\Delta t)$, $\theta=0$ and $\beta=\infty$ yields the update rule
	\begin{equation} \label{eq:local_best update}
		Y_{(k+1)\Delta t}^i = 
		\begin{cases}
			Y_{k\Delta t}^i, &\textrm{if } \TE(X_{(k+1)\Delta t}^i) \geq \TE(Y_{k\Delta t}^i), \\
			X_{(k+1)\Delta t}^i, &\textrm{if } \TE(X_{(k+1)\Delta t}^i) < \TE(Y_{k\Delta t}^i),
		\end{cases}
	\end{equation}
	meaning that the $i$th particle stores in $Y_{k\Delta t}^i$ the best position which it has seen up to the $k$th iteration.
	This explains the name local (historical) best position and restores the original definition from the work~\cite{kennedy1995particle}.
\end{remark}
\noindent
The last deterministic term imposes a drift towards the momentaneous consensus point $y_{\alpha}(\widehat{\rho}_{Y,t}^N)$, given by
\begin{align} \label{eq:momentaneous_consensus}
	y_{\alpha}(\widehat{\rho}_{Y,t}^N)
	:= \int_{\RR^d} y \frac{\omega_\alpha^\TE(y)}{\norm{\omega_\alpha^\TE}_{L_1(\widehat{\rho}_{Y,t}^N)}}\,d\widehat{\rho}_{Y,t}^N(y),
	\quad \textrm{ with }\quad
	\omega_\alpha^\TE(y) := \exp(-\alpha \TE(y)),
\end{align}
where $\widehat{\rho}_{Y,t}^N$ denotes the empirical measure $\widehat{\rho}_{Y,t}^N:=\frac{1}{N}\sum_{i=1}^{N}\delta_{Y_t^{i}}$ of the particles' local best positions.
The choice of the weight $\omega_\alpha^\TE$ in~\eqref{eq:momentaneous_consensus} comes from the well-known Laplace principle~\cite{miller2006applied,Dembo2010}, a classical asymptotic argument for integrals stating that for any probability measure $\varrho\in\mc{P}(\RR^d)$ it holds
\begin{equation}\label{eq:laplace_principle}
	\lim_{\alpha\to\infty}\left(-\frac{1}{\alpha}\log\left(\int_{\RR^d}\omega_\alpha^\TE(y) \, d\varrho(y)\right)\right)
	=\inf_{y\in\supp(\varrho)} \TE(y).
\end{equation}
Based thereon, $y_{\alpha}(\widehat{\rho}_{Y,t}^N)$ is expected to be a rough estimate for a global minimizer~$x^*$, which improves as $\alpha\to\infty$ and as larger regions of the domain are explored.
To feature the latter, the two remaining terms in~\eqref{eq:PSO_with_memory_V}, each associated with a drift term, are diffusion terms injecting randomness into the dynamics through independent standard Brownian motions~$\big((B_t^{1,i})_{t\geq0}\big)_{i=1,\dots,N}$ and~$\big((B_t^{2,i})_{t\geq0}\big)_{i=1,\dots,N}$.
The two commonly studied diffusion types for similar methods are isotropic~\cite{pinnau2017consensus,carrillo2018analytical,fornasier2021consensus} and anisotropic~\cite{carrillo2019consensus,fornasier2021convergence} diffusion with
\begin{equation} \label{eq:diffustion_types}
	D\!\left(y-x\right) =
	\begin{cases}
		\norm{y-x}_2 \Id, & \textrm{ for isotropic diffusion,}\\
		\diag\left(y-x\right)\!, & \textrm{ for anisotropic diffusion},
	\end{cases}
\end{equation}
where $\Id\in\RR^{d\times d}$ is the identity matrix and $\diag:\RR^d\rightarrow\RR^{d\times d}$ the operator mapping a vector onto a diagonal matrix with the vector as its diagonal.
Intuitively, the term's scaling encourages agents far from its own local best position or the globally computed consensus point to explore larger regions, whereas agents already close try to enhance their position only locally.
As the coordinate-dependent scaling of anisotropic diffusion has been proven to be highly beneficial for high-dimensional problems~\cite{carrillo2019consensus,fornasier2021anisotropic}, in what follows, we limit our analysis to this case.
An illustration of the formerly described PSO dynamics~\eqref{eq:PSO_with_memory} is given in Figure~\ref{figure:PSO_dynamics}.
\begin{figure}[!ht]
	\centering
	\includegraphics[width=0.75\textwidth, trim=0 327 0 318,clip]{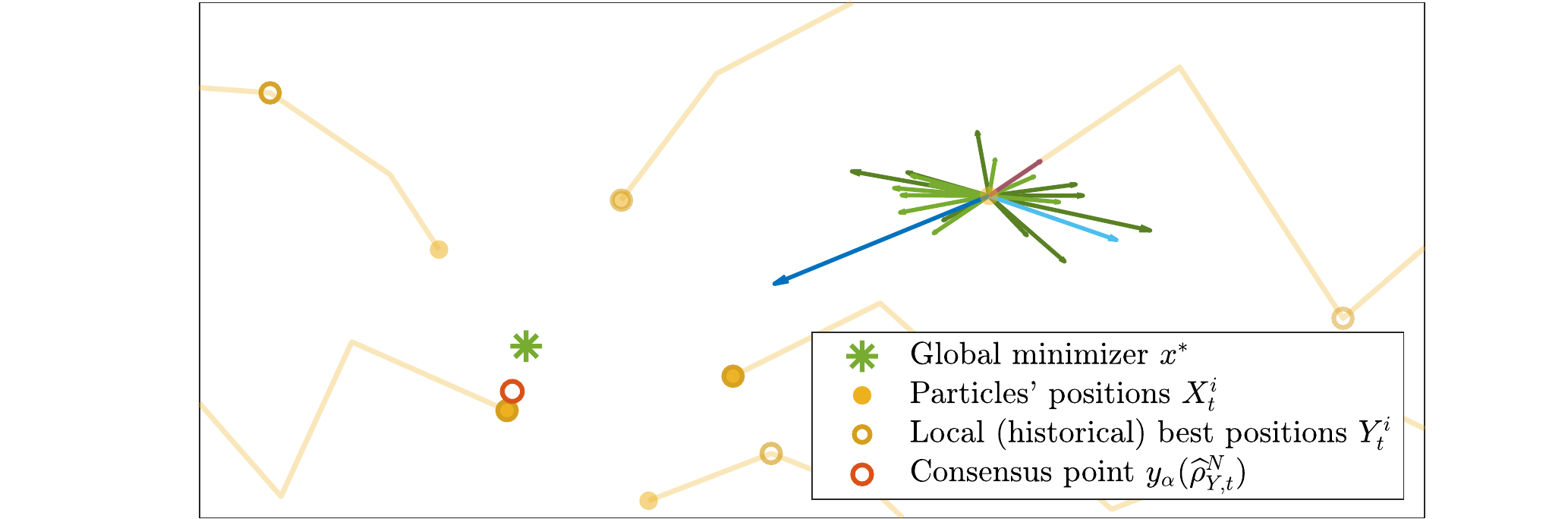}
	\caption{An illustration of the PSO dynamics.
		Agents with positions~$X^1,\dots,X^N$ (yellow dots with their trajectories) explore the energy landscape of~$\TE$ in search of the global minimizer~$x^*$ (green star).
		The dynamics of each particle is governed by five terms.
		A local drift term (light blue arrow) imposes a force towards its local best position~$Y^i_t$ (indicated by a circle).
		A global drift term (dark blue arrow) drags the agent towards a momentaneous consensus point~$y_\alpha(\widehat{\rho}_{Y,t}^N)$ (orange circle) computed as a weighted (visualized through color opacity) average of the particles' local best positions.
		Friction (purple arrow) counteracts inertia.
		The two remaining terms are diffusion terms (light and dark green arrows) associated with a respective drift term.}
	\label{figure:PSO_dynamics}
\end{figure}

A theoretical convergence analysis of PSO is possible either on the microscopic level~\eqref{eq:PSO_with_memory} or by analyzing the macroscopic behavior of the particle density through a mean-field limit, what usually admits more powerful analysis tools.
In the large particle limit an individual particle is not influenced any more by individual particles but only by the average behavior of all particles.
As shown in~\cite[Section~3.2]{grassi2021mean}, the empirical particle measure~$\widehat f^N :=\frac{1}{N}\sum_{i=1}^{N}\delta_{(X^{i},Y^{i},V^{i})}$ converges in law to the deterministic agent distribution $f\in\mc{C}([0,T],\mc{P}(\RR^{3d}))$, which weakly satisfies the nonlinear Vlasov-Fokker-Planck equation
\begin{equation} \label{eq:PSO_with_memory_weak}
\begin{split}
	\partial_{t} f_t
	+ v \cdot \nabla_{x} f_t
	& +\nabla_{y} \cdot \left(\kappa(x-y) S^{\beta,\theta}(x,y) f_t\right)
	= \nabla_{v} \cdot\Bigg(
		\frac{\gamma}{m} v f_t
		+ \frac{\lambda_{1}}{m}\left(x-y\right) f_t
		+ \frac{\lambda_{2}}{m}\left(x-y_{\alpha}(\rho_{Y,t})\right) f_t\\
		& + \left(\frac{\sigma_{1}^{2}}{2 m^{2}} \big(D\!\left(x-y\right)\!\big)^{2} + \frac{\sigma_{2}^{2}}{2m^2} \big(D\!\left(x-y_{\alpha}(\rho_{Y,t})\right)\!\big)^{2}\right) \nabla_{v} f_t
	\Bigg)
\end{split}	
\end{equation}
with initial datum $f_0$.
The mean-field limit results~\cite{bolley2011stochastic,huang2020mean,sznitman1991topics,jabin2017mean,huang2021mean} ensure that the particle system~\eqref{eq:PSO_with_memory} is well approximated by the following self-consistent mean-field McKean process
\begin{subequations} \label{eq:PSO_with_memory_mf}
\begin{align}
	d\OX_t   &= \OV_t \,dt, \label{eq:PSO_with_memory_X_mf}\\
	d\OY_{t} &= \kappa \left(\OX_{t}-\OY_{t}\right) S^{\beta,\theta}\!\left(\OX_{t}, \OY_{t}\right) dt, \label{eq:PSO_with_memory_Y_mf}\\
	m\,d\OV_{t} &= \begin{aligned}[t] \label{eq:PSO_with_memory_V_mf}
		 &\!-\gamma \OV_{t} \,dt
		    + \lambda_{1}\!\left(\OY_{t}-\OX_{t}\right) dt
		    +\lambda_{2}\!\left(y_{\alpha}(\rho_{Y,t})-\OX_{t}\right) dt \\
		 &\!+\sigma_{1} D\!\left(\OY_{t}-\OX_{t}\right) d B_{t}^1
		    +\sigma_{2} D\!\left(y_{\alpha}(\rho_{Y,t})-\OX_{t}\right) d B_{t}^2,
	\end{aligned}
\end{align}
\end{subequations}
with initial datum $(\OX_0,\OY_0,\OV_0)\sim f_0$ and the marginal law~$\rho_{Y,t}$ of $\OY_t$ given by $\rho_Y(t,\,\cdot\,)=\iint_{\RR^{d}\times\RR^{d}} df_t(x,\,\cdot\,,v)$.
Here, $f_t$ denotes the distribution of $(\OX_t,\OY_t,\OV_t)$.
This makes \eqref{eq:PSO_with_memory_weak} and \eqref{eq:PSO_with_memory_mf} nonlinear.

\subsection{Contribution}
In view of the versatility, efficiency, and wide applicability of PSO combined with its long historical tradition, a mathematical analysis of the finite particle system~\eqref{eq:PSO_with_memory} is of considerable interest.

In this work we advance the theoretical understanding of the method and contribute to the completion of a full numerical analysis of PSO by proving rigorously the convergence of PSO with memory effects to global minimizers using mean-field techniques.
More precisely, under mild regularity assumptions on the objective~$\TE$ and a well-preparation condition about the initialization~$f_0$, we analyze the behavior of the particle distribution~$f$ solving the mean-field dynamics~\eqref{eq:PSO_with_memory_mf}.
At first, it is shown that concentration is achieved at some $\tilde x$ in the sense that the marginal law w.r.t.\@~the local best position, $\rho_{Y,t}$, converges narrowly to a Dirac delta $\delta_{\tilde x}$ as $t\to\infty$.
Consecutively, we argue that, for an appropriate choice of the parameters, in particular~$\alpha\gg1$, $\TE(\tilde x)$ can be made arbitrarily close to the minimal value $\underline{\TE} := \inf_{x\in \RR^d} \TE(x)$.
A suitable tractability condition on the objective~$\TE$ eventually ensures that $\tilde x$ is close to a global minimizer.
Similar mean-field convergence results are obtained for the case without memory effects.
In this setting we are moreover able to establish the convergence of the interacting $N$-particle dynamics to its mean-field limit with a dimension-independent rate, which allows to obtain a so far unique wholistic and quantitative convergence statement of PSO.
As the mean-field approximation result does not suffer from the curse of dimensionality, we in particular prove that the numerical PSO method has polynomial complexity.
With these new results we solve the theoretical open problem about the convergence of PSO posed in~\cite{grassi2020particle}.

Furthermore, we propose an efficient and parallelizable implementation, which is particularly suited for machine learning problems by integrating modern machine learning techniques such as random mini-batch ideas as well as traditional metaheuristic-inspired techniques from genetic programming and simulated annealing.

\subsection{Prior Arts}
The convergence of PSO algorithms has been investigated by many scholars since its introduction, which has lead to several variations allowing to establish desirable properties such as consensus formation or convergence to optimal solutions.
While the matter of consensus is well-studied, see, e.g., \cite{Clerc2002Theparticleswarm,zcan1998AnalysisOA}, only few general theoretical statements regarding the properties of the found consensus are available.
Both the existence of a large number of variations of the algorithm and the lack of a rigoros global convergence analysis are attributed amongst other things, such as the stochasticity and the usage of memory mechanisms,
to the phenomenon of premature convergence of basic PSO~\cite{kennedy1995particle}, which was observed in~\cite{van2010convergence,van2007analysis} and remedied by proposing a modified version, called guaranteed convergence PSO.
Nevertheless, this adaptation only allows to prove the convergence to local optima.
In order to obtain therefrom a stochastic global search algorithm, the authors suggest to add purely stochastic particles to the swarm, which trivially makes the method capable of detecting a global optimizer, but entails a computational time which  coincides with the time required to examine every location in the search space.
Other works consider certain notions of weak convergence~\cite{bruned2018weak} or provide probabilistic guarantees of finding locally optimal solutions, meaning that eventually all particles are located almost surely at a local optimum of the objective function~\cite{schmitt2015particle}.
In \cite{poli2009mean}, similarly to our work, the expected behavior of the particles is investigated.

All of the formerly mentioned results though are obtained through the analysis of the particles' trajectories generated by a time-discretized algorithm as in~\cite[Equation~(6.3)]{grassi2021mean}.
The present paper takes a different point of view by studying the time-continuous description of the PSO model~\eqref{eq:PSO_with_memory} through the lens of the mean-field approximation~\eqref{eq:PSO_with_memory_weak}.
Analyzing the macroscopic behavior of a system through a mean-field limit instead of investigating the microscopic particle dynamics has its origins in statistical mechanics~\cite{kadanoff2009Moreisthesame}, where interactions between particles are approximated by an averaged influence.
By eliminating the correlation between the particles, a many-body problem can be reduced to a one-body problem, which is usually much easier to solve while still giving an understanding of the mechanisms at play by describing the average behavior of the particles.
These ideas, for instance, are also used to study the collective behavior of animals when forming large-scale patterns through self-organization by analyzing an associated kinetic PDE~\cite{bolley2011stochastic}.
In very recent works, this perspective of analysis has also been taken to demystify the training process of neural networks, see, e.g., \cite{montanare2018Ameanfieldview,ding2021global}, where a mean-field approximation is utilized to formulate risk minimization by stochastic gradient descent~(SGD) in terms of a gradient-flow PDE, which allows for a rigorous mathematical analysis.

The analysis technique we use follows the line of work of self-organization.
It is inspired by~\cite{carrillo2018analytical,carrillo2019consensus}, where a variance-based analysis approach has been developed for consensus-based optimization~(CBO), which follows the guiding principles of metaheuristics and in particular resembles PSO but is of much simpler nature and therefore easier to analyze.
In comparison to Equation~\eqref{eq:PSO_with_memory}, CBO methods are described by a system of first-order SDEs~\cite[Equation~(1.1)]{carrillo2018analytical} and do not contain memory mechanisms, which are responsible for both a significantly more challenging mathematical modeling and convergence analysis.

\subsection{Organization}
Sections~\ref{sec:PSO_without_memory} and~\ref{sec:PSO_with_memory} are dedicated to the analysis of PSO without and with memory mechanisms, respectively.
After providing details about the well-posedness of the mean-field dynamics, we present and discuss the main result about the convergence of the mean-field dynamics to a global minimizer of the objective function.
In Section~\ref{sec:wholisticconvergence} we then state a quantitative result about the mean-field approximation for PSO without memory effects, which enables us to obtain a wholistic convergence statement of the numerical PSO method.
Eventually, a computationally efficient implementation of PSO is proposed in Section~\ref{sec:numerics}, before Section~\ref{sec:conclusion} concludes the paper.
For the sake of reproducible research, in the GitHub repository \url{https://github.com/KonstantinRiedl/PSOAnalysis} we provide the Matlab code implementing the PSO algorithm analyzed in this work.

\section{Mean-Field Analysis of PSO without Memory Effects} \label{sec:PSO_without_memory}

\noindent
Before providing a theoretical analysis of the mean-field PSO dynamics~\eqref{eq:PSO_with_memory_weak} and~\eqref{eq:PSO_with_memory_mf}, in this section we investigate a reduced version, which does not involve memory mechanisms.
Its multi-particle formulation was proposed in~\cite[Section~3.1]{grassi2020particle} and reads
\begin{subequations} \label{eq:PSO_without_memory}
\begin{align}
	dX_t^i   &= V_t^i \,dt, \label{eq:PSO_without_memory_X}\\
	m\,dV_{t}^i &= -\gamma V_{t}^i \,dt
		    +\lambda\left(x_{\alpha}(\widehat{\rho}_{X,t}^N)-X_{t}^i\right) dt 
		    +\sigma D\!\left(x_{\alpha}(\widehat{\rho}_{X,t}^N)-X_{t}^i\right) dB_{t}^{i}.
		    \label{eq:PSO_without_memory_V}
\end{align}
\end{subequations}
Compared to the full model, each particle is characterized only by its position~$X^i$ and velocity~$V^i$.
The forces acting on a particle, i.e., influencing its velocity in Equation~\eqref{eq:PSO_without_memory_V}, are friction, acceleration through the consensus drift and diffusion as in~\eqref{eq:diffustion_types} with independent standard Brownian motions $\big((B_t^i)_{t\geq0}\big)_{i=1,\dots,N}$.
The consensus point~$x_{\alpha}(\widehat{\rho}_{X,t}^N)$ is directly computed from the current positions of the particles according to
\begin{align}
	x_{\alpha}(\widehat{\rho}_{X,t}^N)
	:= \int_{\RR^d} x \frac{\omega_\alpha^\TE(x)}{\norm{\omega_\alpha^\TE}_{L_1(\widehat{\rho}_{X,t}^N)}}\,d\widehat{\rho}_{X,t}^N(x),
\end{align}
where $\widehat{\rho}_{X,t}^N$ denotes the empirical measure $\widehat{\rho}_{X,t}^N:=\frac{1}{N}\sum_{i=1}^{N}\delta_{X_t^{i}}$ of the particles' positions.
Independent and identically distributed initial data~$\big((X_0^i,V_0^i)\sim f_0\big)_{i=1,\dots,N}$ with~$f_0\in\mc{P}(\RR^{2d})$ complement~\eqref{eq:PSO_without_memory}.

Similar to the particle system \eqref{eq:PSO_with_memory}, as $N\to\infty$, the mean-field dynamics of~\eqref{eq:PSO_without_memory} is described by the nonlinear self-consistent McKean process
\begin{subequations} \label{eq:PSO_without_memory_mf}
\begin{align}
	d\OX_t   &= \OV_t \,dt, \label{eq:PSO_without_memory_X_mf}\\
	m\,d\OV_{t} &= -\gamma \OV_{t} \,dt  
		+\lambda\left(x_{\alpha}(\rho_{X,t})-\OX_{t}\right) dt
		+\sigma D\!\left(x_{\alpha}(\rho_{X,t})-\OX_{t}\right) d B_{t}, \label{eq:PSO_without_memory_V_mf}
\end{align}
\end{subequations}
with initial datum $(\OX_0,\OV_0)\sim f_0$ and the marginal law $\rho_{X,t}$ of $\OX_t$ given by $\rho_X(t,\,\cdot\,)=\int_{\RR^d} df(t,\,\cdot\,,v)$.
A direct application of the It\^{o}-Doeblin formula shows that the law~$f\in\mc{C}([0,T],\mc{P}(\RR^{2d}))$ is a weak solution to the nonlinear Vlasov-Fokker-Planck equation
\begin{equation} \label{eq:PSO_without_memory_weak}
\begin{split}
	\partial_{t} f_t
	+ v \cdot \nabla_{x} f_t
	&= \nabla_{v} \cdot\left(
		\frac{\gamma}{m} v f_t
		+ \frac{\lambda}{m}\left(x-x_{\alpha}({\rho_{X,t}})\right) f_t
		+ \frac{\sigma^{2}}{2m^2} \big(D\!\left(x-x_{\alpha}({\rho_{X,t}})\right)\!\big)^{2} \,\nabla_{v} f_t
	\right)
\end{split}	
\end{equation}
with initial datum $f_0$.

\begin{remark}
	A separate theoretical analysis of the dynamics~\eqref{eq:PSO_without_memory} is necessary as it cannot be derived from~\eqref{eq:PSO_with_memory} in a way that also the proof technique can be adopted in a straightforward manner.
	
	It is also worth noting that Equation~\eqref{eq:PSO_without_memory} bears a certain resemblance to CBO~\cite{pinnau2017consensus,carrillo2018analytical,carrillo2019consensus,fornasier2021consensus,fornasier2021convergence}.
	Indeed, as made rigorous in~\cite{cipriani2022zero}, CBO methods can be derived from PSO in the small inertia limit~$m\to0$.
\end{remark}

Before turning towards the well-posedness of the mean-field dynamics~\eqref{eq:PSO_without_memory_mf} and presenting the main result of this section about the convergence to the global minimizer~$x^*$,
let us introduce the class of objective function~$\TE$ considered in the theoretical part of this work.
We remark that the assumptions made in what follows coincide with the ones of~\cite{carrillo2018analytical,carrillo2019consensus} as well as several subsequent works in this direction.

\begin{assum} \label{ass:assumptions}
Throughout the paper we are interested in objective functions $\TE:\RR^d\rightarrow\RR$, for which
\begin{enumerate}[label=A\arabic*,labelsep=10pt,leftmargin=35pt]
	\item\label{asm:zero_global}
		there exists $x^*\in\RR^d$ such that $\TE(x^*)=\inf_{x\in\RR^d} \TE(x)=:\underline{\TE}$,
	\item\label{asm:Lipschitz}
		there exists some constant $L_\TE>0$ such that
		\begin{equation*}
			\abs{\TE(x)-\TE(x')} \leq L_\TE\left(\abs{x}+\abs{x'}\right)\abs{x-x'}, \quad \text{for all } x,x'\in\RR^d,
		\end{equation*}
	\item\label{asm:growth}
		either $\overline\TE:=\sup_{x\in \RR^d}\TE(x)<\infty$ or there exist constants $c_\TE,R>0$ such that
		\begin{equation*}
			\TE(x)-\underline\TE \geq c_\TE\abs{x}^2, \quad \text{for all } x\in\RR^d \text{ with } \abs{x}\geq R,
		\end{equation*}
	\item\label{asm:Hessian}
		$\TE\in\mc{C}^2(\RR^d)$ with $\norm{\nabla^2\TE}_\infty\leq C_\TE$ for some constant $C_\TE>0$,
	\item\label{asm:icp}
		there exist $\eta>0$ and $\nu\in(0,\infty)$ such that for any $x\in\RR^d$ there exists a global minimizer $x^*$ of $\TE$ (which may depend on $x$) such that
		\begin{equation*}
			\abs{x-x^*} \leq (\TE(x)-\underline\TE)^{\nu}/\eta.
		\end{equation*}
\end{enumerate}
\end{assum}

Assumption~\ref{asm:zero_global} just states that the objective function~$\TE$ attains its infimum~$\underline\TE$ at some~$x^*\in\RR^d$, which may not necessarily be unique.
Assumption~\ref{asm:Lipschitz} describes the local Lipschitz-continuity of~$\TE$, entailing in particular that the objective has at most quadratic growth at infinity.
Assumption~\ref{asm:growth}, on the other hand, requires~$\TE$ to be either bounded or of at least quadratic growth in the farfield.
Together, \ref{asm:Lipschitz} and \ref{asm:growth} allow to obtain the well-posedness of the PSO model.
Assumption~\ref{asm:Hessian} is a regularity assumption about~$\TE$, which is required only for the theoretical analysis. The PSO method is a zero-order method where we do not need the gradient information of the objective function in the numerical application.
Assumption~\ref{asm:icp} should be interpreted as a tractability condition of the landscape of~$\TE$, which ensures that achieving an objective value of approximately~$\underline\TE$ guarantees closeness to a global minimizer~$x^*$ and thus eliminates cases of almost-optimal valleys in the energy landscape far away from any globally minimizing argument.
Such assumption is therefore also referred to as an inverse continuity property.

It shall be emphasized that objectives with multiple global minima of identical quality are not excluded.

\subsection{Well-Posedness of PSO without Memory Effects} \label{sec:wellposedness_without_memory}

Let us recite a well-posedness result about the mean-field PSO dynamics~\eqref{eq:PSO_without_memory_mf} and the associated Vlasov-Fokker-Planck equation~\eqref{eq:PSO_without_memory_weak}.
Its proof is analogous to the one provided for Theorem~\ref{thm:wellposedness_memory} for the full dynamics~\eqref{eq:PSO_with_memory_mf} based on the Leray-Schauder fixed point theorem. 

\begin{thm} \label{thm:wellposedness}
	Let $\TE$ satisfy Assumptions~\ref{asm:zero_global}--\ref{asm:growth} and let $m,\gamma,\lambda,\sigma,\alpha,T>0$.
	If $(\OX_0,\OV_0)$ is distributed according to $f_0\in \mc{P}_4(\RR^{2d})$, then the nonlinear SDE~\eqref{eq:PSO_without_memory_mf} admits a unique strong solution up to time~$T$ with the paths of process~$(\OX,\OV)$ valued in $ \mc{C}([0,T],\RR^{d})\times \mc{C}([0,T],\RR^{d})$.
	The associated law~$f$ has regularity~$\mc{C}([0,T],\mc{P}(\RR^{2d}))$ and is a weak solution to the Vlasov-Fokker-Planck equation~\eqref{eq:PSO_without_memory_weak}.
	In particular,
	\begin{equation} \label{eq:wellposedness_regularity}
		\sup_{t\in[0,T]} \EE[\absnormal{\OX_t}^4+\absnormal{\OV_t}^4]\leq \left(1+2\EE[\absnormal{\OX_0}^4+\absnormal{\OV_0}^4]\right)e^{CT}
	\end{equation}
	for some constant $C>0$ depending only on $m,\gamma,\lambda,\sigma,\alpha, c_\TE, R,$ and $L_\TE$.
\end{thm}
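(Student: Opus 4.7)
The plan is to adapt the Leray--Schauder fixed point strategy used for Theorem~\ref{thm:wellposedness_memory} to this simpler setting without the $Y$ process.

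I would first set up the nonlinearity as a fixed point problem. For a candidate continuous path of measures $u\in\mc{C}([0,T],\mc{P}_4(\RR^d))$, freeze the consensus point in \eqref{eq:PSO_without_memory_mf} and consider the linear SDE
\begin{equation*}
	d\tilde X_t = \tilde V_t \,dt,\qquad m\,d\tilde V_t = -\gamma \tilde V_t \,dt + \lambda\bigl(x_\alpha(u_t)-\tilde X_t\bigr)dt + \sigma D\!\bigl(x_\alpha(u_t)-\tilde X_t\bigr)dB_t,
\end{equation*}
with $(\tilde X_0,\tilde V_0)\sim f_0$. Since $t\mapsto x_\alpha(u_t)$ is a deterministic continuous curve (continuity of $x_\alpha$ in the measure variable under Assumptions \ref{asm:Lipschitz}--\ref{asm:growth} is standard) and the coefficients of the SDE are affine in $(\tilde X,\tilde V)$, the frozen system admits a unique strong solution. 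This defines a map $\mc{T}:u\mapsto\mathrm{Law}(\tilde X_\cdot)$, whose fixed points are exactly the marginals of solutions to~\eqref{eq:PSO_without_memory_mf}.

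Next I would derive a priori fourth-moment bounds. The crucial step is to control $|x_\alpha(u_t)|$ by moments of $u_t$: in the bounded-$\TE$ branch of \ref{asm:growth} one has $\|\omega_\alpha^\TE\|_{L^1(u_t)}\geq e^{-\alpha\bar\TE}$, while in the coercive branch restricting the denominator to a ball of radius $\sim\sqrt{\int|x|^2du_t}$ yields a comparable lower bound. In both cases,
\begin{equation*}
	|x_\alpha(u_t)|\;\leq\;C\Bigl(1+\sqrt{\textstyle\int|x|^2\,du_t(x)}\Bigr).
\end{equation*}
Applying It\^o to $|\tilde X_t|^4+|\tilde V_t|^4$, using Young's inequality on the cross terms, and closing with Gr\"onwall yields the estimate~\eqref{eq:wellposedness_regularity} with a constant $C$ depending only on $m,\gamma,\lambda,\sigma,\alpha,c_\TE,R,L_\TE$. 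These bounds are independent of $u$ as long as the relevant moment of $u$ stays bounded, which pins down a ball $B_M\subset\mc{C}([0,T],\mc{P}_4(\RR^d))$ (endowed with a Wasserstein-$2$ metric on the time slices, supremum in time) that is invariant under $\mc{T}$.

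Then I would verify the hypotheses of the Leray--Schauder theorem on $B_M$. Compactness of $\mc{T}$ follows from tightness (uniform fourth moments via Prokhorov) together with H\"older-$1/2$-in-time continuity of $t\mapsto\mathrm{Law}(\tilde X_t)$ in Wasserstein distance, which one reads off the SDE. Continuity of $\mc{T}$ reduces to the stability estimate $|x_\alpha(u)-x_\alpha(u')|\leq C_\alpha W_2(u,u')$, uniform on bounded subsets of $\mc{P}_4(\RR^d)$ under \ref{asm:Lipschitz}--\ref{asm:growth}, which is then propagated through the linear SDE by Gr\"onwall. The homotopy bound on $\{u:u=\tau\mc{T}(u),\,\tau\in[0,1]\}$ comes from exactly the same a priori estimates, which are $\tau$-independent. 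Leray--Schauder thus yields a fixed point $\rho_X$, and the corresponding process $(\OX,\OV)$ is the sought strong solution.

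For uniqueness I would couple two solutions via the same Brownian motion, apply It\^o to $|\OX_t-\OX_t'|^2+|\OV_t-\OV_t'|^2$, and use $|x_\alpha(\rho_{X,t})-x_\alpha(\rho_{X,t}')|^2\leq C_\alpha\,\EE|\OX_t-\OX_t'|^2$ to close by Gr\"onwall. Finally, testing the It\^o decomposition of $\phi(\OX_t,\OV_t)$ for $\phi\in\mc{C}_c^\infty(\RR^{2d})$ identifies $f_t=\mathrm{Law}(\OX_t,\OV_t)$ as a weak solution of~\eqref{eq:PSO_without_memory_weak}, and the $\mc{C}([0,T],\mc{P}(\RR^{2d}))$-regularity follows from the It\^o dynamics.

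The hard part is the stability/compactness analysis for the fixed point map: because the nonlinearity enters through the Laplace-weighted average $x_\alpha$, one must carefully combine the superlinear $L^1$-lower bound on $\omega_\alpha^\TE$ coming from \ref{asm:growth} with the fourth-moment control on $u$, uniformly over the invariant set $B_M$. Once this is established the remaining steps are routine Gr\"onwall and It\^o computations parallel to those of Theorem~\ref{thm:wellposedness_memory}.
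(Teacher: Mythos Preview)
Your strategy is correct and follows the same skeleton as the paper's proof of Theorem~\ref{thm:wellposedness_memory} (which the paper simply says to adapt): freeze the nonlinearity, solve the resulting linear SDE, obtain fourth-moment bounds, establish stability/compactness via the estimate $|x_\alpha(\mu)-x_\alpha(\mu')|\lesssim W_2(\mu,\mu')$ from \cite[Lemma~3.2]{carrillo2018analytical}, apply Leray--Schauder, and conclude uniqueness by Gr\"onwall.

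There is, however, one structural difference worth noting. You set up the fixed point on the space $\mc{C}([0,T],\mc{P}_4(\RR^d))$ of measure-valued paths, with $\mc{T}:u\mapsto\mathrm{Law}(\tilde X_\cdot)$. The paper instead freezes a \emph{deterministic curve} $u\in\mc{C}([0,T],\RR^d)$ in place of the consensus point and defines $\mc{T}u:=x_\alpha(\tilde\rho_X)\in\mc{C}([0,T],\RR^d)$. This is cleaner because $\mc{C}([0,T],\RR^d)$ is a Banach space, so the classical Leray--Schauder theorem applies directly; compactness then follows immediately from the H\"older-$1/2$ continuity of $t\mapsto x_\alpha(\tilde\rho_{X,t})$ via Arzel\`a--Ascoli, and the homotopy bound $u=\vartheta\mc{T}u$ comes from the same moment estimate combined with the growth control on $x_\alpha$ from Assumption~\ref{asm:growth}. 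Your formulation on $\mc{P}_4$ is not a linear space, so invoking Leray--Schauder there requires either an embedding into a Banach space or a metric-space fixed point theorem (e.g.\ Schauder on a compact convex set); this is doable but adds technical overhead that the paper's parameterization avoids entirely. The remaining ingredients---the moment bound~\eqref{eq:wellposedness_regularity}, the stability lemma for $x_\alpha$, and the uniqueness argument---are identical in both approaches.
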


\subsection{Convergence of PSO without Memory Effects to a Global Minimizer} \label{sec:convergence_without_memory}

A successful application of the PSO dynamics underlies the premise that the particles form consensus about a certain position~$\tilde x$.
In particular, in the mean-field limit one expects that the distribution of a particle's position~$\rho_{X,t}$ converges to a Dirac delta~$\delta_{\tilde x}$.
This entails that the variance in the position~$\EE[\absnormal{\OX_t-\EE[\OX_t]}^2]$ and the second-order moment of the velocity~$\EE[\absnormal{\OV_t}^2]$ of the averaged particle vanish.
As we show in what follows, both functionals indeed decay exponentially fast in time.
Motivated by these expectations we define the functional 
\begin{equation} \label{eq:H}
	\CH(t) := \left(\frac{\gamma}{2m}\right)^2 \absnormal{\OX_t-\EE[\OX_t]}^2+\absnormal{\OV_t}^2+\frac{\gamma}{2m}\left\langle\OX_t-\EE[\OX_t]), \OV_t\right\rangle,
\end{equation}
which we analyze in the remainder of this section.
Its last term is required from a technical perspective.
However, by proving the decay of $\EE[\CH(t)]$, one immediately obtains the same for $\EE[\absnormal{\OX_t-\EE[\OX_t]}^2+\absnormal{\OV_t}^2]$ as a consequence of the equivalence established in Lemma~\ref{lem:equivalence_H}, which follows from Young's inequality.

\begin{lem} \label{lem:equivalence_H}
	The functional $\CH(t)$ is equivalent to $\absnormal{\OX_t-\EE[\OX_t]}^2+\absnormal{\OV_t}^2$ in the sense that
	\begin{equation} \label{eq:equivalence_H}
	\begin{split}
		\frac{1}{2}\left(\frac{\gamma}{2m}\right)^2\absnormal{\OX_t-\EE[\OX_t]}^2+\frac{1}{2}\absnormal{\OV_t}^2
		\leq\CH(t)
		&\leq \frac{3}{2}\left(\left(\frac{\gamma}{2m}\right)^2+1\right)\left(\absnormal{\OX_t-\EE[\OX_t]}^2+\absnormal{\OV_t}^2\right).
	\end{split}
	\end{equation}
\end{lem}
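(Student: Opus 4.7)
The plan is to recognize $\CH(t)$ as a quadratic form in the two vectors $A := \OX_t - \EE[\OX_t]$ and $B := \OV_t$, with coefficients $c^2$, $1$, and $c$ for the terms $|A|^2$, $|B|^2$, and $\langle A,B\rangle$ respectively, where $c := \gamma/(2m)$. Both inequalities then follow from a single application of Young's inequality to the cross term, applied in opposite directions for the lower and upper estimates.

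Concretely, I would first invoke Young's inequality in the form
\begin{equation*}
   c\,\big\lvert\langle A,B\rangle\big\rvert \;=\; \big\lvert\langle cA,B\rangle\big\rvert \;\leq\; \tfrac{1}{2}c^2|A|^2 + \tfrac{1}{2}|B|^2.
\end{equation*}
For the lower bound, this yields $c\langle A,B\rangle \geq -\tfrac{1}{2}c^2|A|^2 - \tfrac{1}{2}|B|^2$, so
\begin{equation*}
   \CH(t) \;=\; c^2|A|^2 + |B|^2 + c\langle A,B\rangle \;\geq\; \tfrac{1}{2}c^2|A|^2 + \tfrac{1}{2}|B|^2,
\end{equation*}
which is exactly the left-hand inequality of \eqref{eq:equivalence_H}.

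For the upper bound, the same Young estimate gives $c\langle A,B\rangle \leq \tfrac{1}{2}c^2|A|^2 + \tfrac{1}{2}|B|^2$, so
\begin{equation*}
   \CH(t) \;\leq\; \tfrac{3}{2}c^2|A|^2 + \tfrac{3}{2}|B|^2 \;=\; \tfrac{3}{2}\bigl(c^2|A|^2 + |B|^2\bigr).
\end{equation*}
Then I would use the trivial coordinate-wise bound $c^2|A|^2 + |B|^2 \leq (c^2+1)\bigl(|A|^2+|B|^2\bigr)$, obtained by enlarging the coefficient of $|A|^2$ from $c^2$ to $c^2+1$ and the coefficient of $|B|^2$ from $1$ to $c^2+1$, to conclude the right-hand inequality of \eqref{eq:equivalence_H}.

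There is no real obstacle here; the statement is an elementary equivalence of quadratic forms, and the only thing to be careful about is keeping the signs straight when applying Young's inequality (once as $\pm c\langle A,B\rangle \leq \ldots$ for the lower bound, once as $+c\langle A,B\rangle \leq \ldots$ for the upper bound). The specific constants $1/2$ and $3/2$ arise from choosing the Young parameter $\epsilon = 1$ in such a way that the cross term is absorbed symmetrically into the diagonal terms $c^2|A|^2$ and $|B|^2$.
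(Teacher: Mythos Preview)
Your proof is correct and matches the paper's approach exactly: the paper simply states that the equivalence follows from Young's inequality, and you have spelled out precisely this argument.
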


We now derive an evolution inequality of the quantity $\EE[\CH(t)]$.
\begin{lem} \label{lem:evolution_H}
	Let $\TE$ satisfy Assumptions~\ref{asm:zero_global}--\ref{asm:growth} and let $(\OX_t,\OV_t)_{t\geq 0}$ be a solution to the nonlinear SDE~\eqref{eq:PSO_without_memory_mf}.
	Then $\EE[\CH(t)]$ with $\CH$ as defined in~\eqref{eq:H} satisfies
	\begin{equation} \label{eq:evolution_H}
		\frac{d}{dt}\EE[\CH(t)] \leq 
		-\frac{\gamma}{m}\EE[\absnormal{\OV_t}^2]
		-\left(\frac{\lambda\gamma}{2m^2}-\left(\frac{2\lambda^2}{\gamma m}+\frac{\sigma^2}{m^2}\right)\frac{2e^{-\alpha\underline\TE}}{\EE[\exp(-\alpha\TE(\OX_t))]}\right)\EE[\absnormal{\OX_t-\EE[\OX_t]}^2].
	\end{equation}
\end{lem}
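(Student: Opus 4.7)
The plan is to apply It\^{o}'s formula to each of the three quadratic-type terms in $\CH(t)$, take expectations (so the stochastic integrals vanish), carefully combine the resulting ODE for $\EE[\CH(t)]$, and then bound the ``bad'' terms involving $x_\alpha(\rho_{X,t})$ by $\EE[\absnormal{\OX_t-\EE[\OX_t]}^2]$ via a weighted Jensen argument. The specific coefficient $\gamma/(2m)$ in front of the cross term $\la \OX_t-\EE[\OX_t],\OV_t\ra$ is chosen precisely so that the ``velocity--displacement'' cross terms cancel exactly.

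First, using $d\OX_t=\OV_t\,dt$ and $\frac{d}{dt}\EE[\OX_t]=\EE[\OV_t]$, I would compute
$\frac{d}{dt}\EE\absnormal{\OX_t-\EE[\OX_t]}^2 = 2\EE\la \OX_t-\EE[\OX_t],\OV_t\ra$.
Next, applying It\^{o}'s formula to $\absnormal{\OV_t}^2$ using the SDE~\eqref{eq:PSO_without_memory_V_mf} and the fact that for anisotropic diffusion the Itô correction gives $(\sigma^2/m^2)\absnormal{x_\alpha(\rho_{X,t})-\OX_t}^2$, I obtain
$\frac{d}{dt}\EE\absnormal{\OV_t}^2 = -\frac{2\gamma}{m}\EE\absnormal{\OV_t}^2 + \frac{2\lambda}{m}\EE\la \OV_t,x_\alpha(\rho_{X,t})-\OX_t\ra + \frac{\sigma^2}{m^2}\EE\absnormal{x_\alpha(\rho_{X,t})-\OX_t}^2$.
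Applying It\^{o}'s product formula to $\la \OX_t-\EE[\OX_t],\OV_t\ra$ (the quadratic covariation vanishes since $\OX$ is of bounded variation) and using $\EE\la \OX_t-\EE[\OX_t],x_\alpha(\rho_{X,t})\ra = 0$ yields
$\frac{d}{dt}\EE\la \OX_t-\EE[\OX_t],\OV_t\ra = \EE\absnormal{\OV_t}^2 - \absnormal{\EE[\OV_t]}^2 - \frac{\gamma}{m}\EE\la \OX_t-\EE[\OX_t],\OV_t\ra - \frac{\lambda}{m}\EE\absnormal{\OX_t-\EE[\OX_t]}^2$.

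Adding these three with the weights $(\gamma/(2m))^2$, $1$, and $\gamma/(2m)$ from the definition of $\CH$, the $\EE\la \OX_t-\EE[\OX_t],\OV_t\ra$ contributions cancel by construction, leaving a $-\tfrac{3\gamma}{2m}\EE\absnormal{\OV_t}^2$ coefficient, a $-\tfrac{\lambda\gamma}{2m^2}\EE\absnormal{\OX_t-\EE[\OX_t]}^2$ coefficient, a nonpositive $-\tfrac{\gamma}{2m}\absnormal{\EE[\OV_t]}^2$ term (which I simply drop), and two leftover terms $\tfrac{2\lambda}{m}\EE\la \OV_t, x_\alpha(\rho_{X,t})-\OX_t\ra$ and $\tfrac{\sigma^2}{m^2}\EE\absnormal{x_\alpha(\rho_{X,t})-\OX_t}^2$. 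To the first I apply Young's inequality $2\la a,b\ra\leq c\absnormal{a}^2+\absnormal{b}^2/c$ with $c=\gamma/(2\lambda)$, which produces exactly $\tfrac{\gamma}{2m}\EE\absnormal{\OV_t}^2+\tfrac{2\lambda^2}{\gamma m}\EE\absnormal{x_\alpha(\rho_{X,t})-\OX_t}^2$; absorbing the velocity term turns the coefficient on $\EE\absnormal{\OV_t}^2$ from $-\tfrac{3\gamma}{2m}$ into exactly $-\tfrac{\gamma}{m}$.

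The final step is to convert $\EE\absnormal{x_\alpha(\rho_{X,t})-\OX_t}^2$ into a multiple of $\EE\absnormal{\OX_t-\EE[\OX_t]}^2$. Writing $x_\alpha(\rho_{X,t})-\OX_t = (x_\alpha(\rho_{X,t})-\EE[\OX_t])-(\OX_t-\EE[\OX_t])$ and using that the cross term vanishes in expectation, I get $\EE\absnormal{x_\alpha(\rho_{X,t})-\OX_t}^2 = \absnormal{x_\alpha(\rho_{X,t})-\EE[\OX_t]}^2+\EE\absnormal{\OX_t-\EE[\OX_t]}^2$. For the first summand I interpret $x_\alpha(\rho_{X,t})$ as the expectation of $\OX_t$ under the tilted probability measure with density $\omega_\alpha^\TE/\EE[\omega_\alpha^\TE(\OX_t)]$ and apply Jensen's inequality, obtaining $\absnormal{x_\alpha(\rho_{X,t})-\EE[\OX_t]}^2 \leq \EE[\absnormal{\OX_t-\EE[\OX_t]}^2\omega_\alpha^\TE(\OX_t)]/\EE[\omega_\alpha^\TE(\OX_t)]\leq \tfrac{e^{-\alpha\underline\TE}}{\EE[\exp(-\alpha\TE(\OX_t))]}\EE\absnormal{\OX_t-\EE[\OX_t]}^2$, where in the last step I use $\omega_\alpha^\TE\leq e^{-\alpha\underline\TE}$. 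Since this prefactor is always $\geq 1$, combining both pieces yields $\EE\absnormal{x_\alpha(\rho_{X,t})-\OX_t}^2\leq \tfrac{2e^{-\alpha\underline\TE}}{\EE[\exp(-\alpha\TE(\OX_t))]}\EE\absnormal{\OX_t-\EE[\OX_t]}^2$, which slotted into the inequality gives exactly~\eqref{eq:evolution_H}.

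I expect the main delicate points to be the precise tuning of Young's inequality so the remaining $\EE\absnormal{\OV_t}^2$-coefficient matches $-\gamma/m$, and the weighted Jensen step, which is the mechanism by which the Laplace-type factor $e^{-\alpha\underline\TE}/\EE[\exp(-\alpha\TE(\OX_t))]$ enters the estimate. Everything else is a bookkeeping exercise once the It\^{o} computations and the cancellation of cross terms are in place.
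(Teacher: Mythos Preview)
Your proof is correct and follows essentially the same approach as the paper: It\^{o}'s formula on each of the three pieces of $\CH$, exploitation of $\EE[\langle\OX_t-\EE[\OX_t],\,\text{const}\rangle]=0$, Young's inequality with the specific weight $\varepsilon=2\lambda/\gamma$ (which you chose directly rather than keeping as a free parameter until the end), and a Jensen-type bound giving $\EE\absnormal{\OX_t-x_\alpha(\rho_{X,t})}^2\leq \tfrac{2e^{-\alpha\underline\TE}}{\EE[\exp(-\alpha\TE(\OX_t))]}\EE\absnormal{\OX_t-\EE[\OX_t]}^2$. The only cosmetic difference is in this last estimate: the paper obtains it via a double-integral Jensen argument $\EE\absnormal{\OX_t-x_\alpha}^2\leq\iint\absnormal{x-x'}^2\omega_\alpha^\TE(x')\,d\rho_{X,t}(x')d\rho_{X,t}(x)\big/\!\int\omega_\alpha^\TE\,d\rho_{X,t}$, whereas you split $x_\alpha-\OX_t$ through $\EE[\OX_t]$, compute the cross term to be zero exactly, and then use the observation that the Laplace factor is always $\geq 1$ to absorb the unweighted variance term---a slightly cleaner route to the identical bound.
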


\begin{proof}
	Let us write $\delta\OX_t:=\OX_t-\EE[\OX_t]$ for short and note that the integration by parts formula gives
	\begin{equation} \label{eq:time_evolution_variance}
		\frac{d}{dt} \EE[\absnormal{\delta\OX_t}^2]
			=2\EE[\left\langle\delta\OX_t, \OV_t\right\rangle].
	\end{equation}
	Observe that the stochastic integrals have vanishing expectations as a consequence of the regularity obtained in Theorem~\ref{thm:wellposedness}. Applying the It\^{o}-Doeblin formula and Young's inequality yields 
	\begin{equation} \label{term2}
	\begin{split}
		\frac{d}{dt} \EE[\absnormal{\OV_t}^2]
		&= -\frac{2\gamma}{m}\EE[\absnormal{\OV_t}^2]
			+\frac{2\lambda}{m}\EE[\left\langle\OV_t,x_\alpha(\rho_{X,t})-\OX_t\right\rangle]
			+\frac{\sigma^2}{m^2}\EE[\absnormal{x_\alpha(\rho_{X,t})-\OX_t}^2] \\
		&\leq -\left(\frac{2\gamma}{m}-\frac{\lambda}{\varepsilon m}\right)\EE[\absnormal{\OV_t}^2]+\left(\frac{\varepsilon\lambda}{m}+\frac{\sigma^2}{m^2}\right)\EE[\absnormal{x_\alpha(\rho_{X,t})-\OX_t}^2],\quad \forall\, \varepsilon>0.
	\end{split}
	\end{equation}
	Again by employing the It\^{o}-Doeblin formula we obtain
	\begin{equation} \label{eq:time_evolution_scalarproductXV}
	\begin{split}
		\frac{d}{dt}\EE[\left\langle\delta\OX_t, \OV_t\right\rangle]
		&=\EE[\absnormal{\OV_t}^2]-\left(\EE[\OV_t]\right)^2-\frac{\gamma}{m}\EE[\left\langle\delta\OX_t,\OV_t\right\rangle]+\frac{\lambda}{m}\EE[\left\langle\delta\OX_t,x_\alpha(\rho_{X,t})-\OX_t\right\rangle]\\
		&\leq\EE[\absnormal{\OV_t}^2]-\frac{\gamma}{2m}\frac{d}{dt} \EE[\absnormal{\delta\OX_t}^2]+\frac{\lambda}{m}\EE[\left\langle\delta\OX_t,x_\alpha(\rho_{X,t})-\EE[\OX_t]\right\rangle]-\frac{\lambda}{m}\EE[\absnormal{\delta\OX_t}^2] \\
		&= \EE[\absnormal{\OV_t}^2]-\frac{\gamma}{2m}\frac{d}{dt} \EE[\absnormal{\delta\OX_t}^2]-\frac{\lambda}{m}\EE[\absnormal{\delta\OX_t}^2],
	\end{split}
	\end{equation}
	where we used the identity~\eqref{eq:time_evolution_variance} and the fact that $\EE[\left\langle\delta\OX_t,x_\alpha(\rho_{X,t})-\EE[\OX_t]\right\rangle]=0$ in the last two steps.
	We now rearrange inequality~\eqref{eq:time_evolution_scalarproductXV} to get
	\begin{equation*}
		\left(\frac{\gamma}{2m}\right)^2\frac{d}{dt} \EE[\absnormal{\delta\OX_t}^2]+\frac{\gamma}{2m}\frac{d}{dt}\EE[\left\langle\delta\OX_t, \OV_t\right\rangle]
		\leq \frac{\gamma}{2m}\EE[\absnormal{\OV_t}^2]-\frac{\lambda\gamma}{2m^2}\EE[\absnormal{\delta\OX_t}^2],
	\end{equation*}
	which, in combination with~\eqref{term2}, allows to show
	\begin{equation} \label{eq:evolution_H_aux}
		\frac{d}{dt}\EE[\CH(t)]
		\leq -\left(\frac{3\gamma}{2m}-\frac{\lambda}{\varepsilon m}\right)\EE[\absnormal{\OV_t}^2]
			-\frac{\lambda\gamma}{2m^2}\EE[\absnormal{\delta\OX_t}^2]
			+\left(\frac{\varepsilon\lambda}{m}+\frac{\sigma^2}{m^2}\right)\EE[\absnormal{\OX_t-x_\alpha(\rho_{X,t})}^2].
	\end{equation}
	In order to upper bound $\EE[\absnormal{\OX_t-x_\alpha(\rho_{X,t})}^2]$, an application of Jensen's inequality yields
	\begin{equation} \label{eq:boundX-consensuspoint}
		\EE[\absnormal{\OX_t-x_\alpha(\rho_{X,t})}^2]
		\leq 
		\frac{\iint_{\RR^{2d}}\abs{x-x'}^2\omega_\alpha^\TE(x')\,d\rho_{X,t}(x')d\rho_{X,t}(x)}{\int_{\RR^d} \omega_\alpha^\TE(x')\,d\rho_{X,t}(x')}
		\leq 
		2e^{-\alpha\underline \TE}\frac{\EE[\absnormal{\delta\OX_t}^2]}{\EE[\exp(-\alpha\TE(\OX_t))]}.
	\end{equation}
	By choosing $\varepsilon=(2\lambda)/\gamma$ in~\eqref{eq:evolution_H_aux} and utilizing the estimate~\eqref{eq:boundX-consensuspoint},
	we obtain~\eqref{eq:evolution_H} as desired.
\end{proof}

\begin{remark} \label{remark:parameter_choice}
	To obtain exponential decay of $\EE[\CH(t)]$ it is necessary to ensure the negativity of the prefactor of~$\EE[\absnormal{\OX_t-\EE[\OX_t]}^2]$ in Inequality~\eqref{eq:evolution_H} by choosing the parameters of the PSO method in a suitable manner.
	This may be achieved by choosing for any fixed time~$t$, given~$\alpha$ and arbitrary $\sigma,\gamma>0$,
	\begin{equation} \label{eq:remark:parameter_choice}
		\lambda > 4D_t^X\sigma^2/\gamma
		\quad\text{and subsequently}\quad
		m < \gamma^2/(8D_t^X\lambda),
	\end{equation}
	where we abbreviate $D_t^X=2e^{-\alpha\underline\TE}/\EE[\exp(-\alpha\TE(\OX_t))]$.
\end{remark}

In order to be able to choose the parameters in Remark~\ref{remark:parameter_choice} once at the beginning of the algorithm instead of at every time step~$t$, we need to be able to control the time-evolution of~$\EE[\exp(-\alpha\TE(\OX_t))]$.
We therefore study its time-derivative in the following lemma.

\begin{lem} \label{lem:evolution_expE2}
	Let $\TE$ satisfy Assumptions~\ref{asm:zero_global}--\ref{asm:Hessian} and let $(\OX_t,\OV_t)_{t\geq 0}$ be the solution to the nonlinear SDE~\eqref{eq:PSO_without_memory_mf}.
	Then it holds that
	\begin{equation}
		\frac{d^2}{dt^2}\left(\EE[\exp(-\alpha\TE(\OX_t))]\right)^2
		\geq -\frac{\gamma}{m} \frac{d}{dt} \left(\EE[\exp(-\alpha\TE(\OX_t))]\right)^2 
		 	-4\alpha e^{-2\alpha \underline{\TE}} C_\TE \left(1+2\frac{\lambda}{m}\left(\frac{2m}{\gamma}\right)^{2} \right)\EE[\CH(t)].
			\label{eq-inequality_lemE2}
	\end{equation}
\end{lem}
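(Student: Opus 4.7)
\smallskip
\noindent\textbf{Proof plan for Lemma~\ref{lem:evolution_expE2}.}
Abbreviate $E(t):=\EE[\exp(-\alpha\TE(\OX_t))]$, so that $\tfrac{d^2}{dt^2}E^2+\tfrac{\gamma}{m}\tfrac{d}{dt}E^2 = 2(E')^2+2E\bigl(E''+\tfrac{\gamma}{m}E'\bigr)$. The plan is to compute $E'$ and $E''+\tfrac{\gamma}{m}E'$ explicitly via It\^o's formula, drop the manifestly nonnegative pieces, and estimate the two remaining terms in terms of $\EE[\CH(t)]$ using the Hessian bound from \ref{asm:Hessian} and Lemma~\ref{lem:equivalence_H}.

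Since $d\OX_t=\OV_t\,dt$ carries no diffusion, a first application of It\^o yields $E'(t)=-\alpha\EE\bigl[e^{-\alpha\TE(\OX_t)}\nabla\TE(\OX_t)\cdot\OV_t\bigr]$. A second application of It\^o to the linear-in-$v$ integrand $\phi(x,v):=e^{-\alpha\TE(x)}\nabla\TE(x)\cdot v$ (whose Hessian in $v$ vanishes, so no diffusion correction arises from the velocity SDE either) gives, after collecting the $-\tfrac{\gamma}{m}E'$ term on the left,
\begin{equation*}
E''(t)+\tfrac{\gamma}{m}E'(t)
= \alpha^2\EE\bigl[e^{-\alpha\TE}(\nabla\TE\cdot\OV)^2\bigr]
-\alpha\EE\bigl[e^{-\alpha\TE}\OV^{\mathsf T}\nabla^2\TE\,\OV\bigr]
-\tfrac{\alpha\lambda}{m}\EE\bigl[e^{-\alpha\TE}\nabla\TE\cdot(x_\alpha(\rho_{X,t})-\OX_t)\bigr].
\end{equation*}
The terms $2(E')^2\ge 0$ and $2E\alpha^2\EE[e^{-\alpha\TE}(\nabla\TE\cdot\OV)^2]\ge0$ are simply discarded, reducing the task to an upper bound on the absolute values of the two remaining contributions.

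For the Hessian-velocity term, $\|\nabla^2\TE\|_\infty\le C_\TE$ gives $|\OV^{\mathsf T}\nabla^2\TE\,\OV|\le C_\TE|\OV|^2$, and combining $E(t)\le e^{-\alpha\underline\TE}$, $e^{-\alpha\TE}\le e^{-\alpha\underline\TE}$ with the estimate $|\OV|^2\le 2\CH$ from Lemma~\ref{lem:equivalence_H} produces $2E\alpha|\EE[e^{-\alpha\TE}\OV^{\mathsf T}\nabla^2\TE\,\OV]|\le 4\alpha C_\TE e^{-2\alpha\underline\TE}\EE[\CH(t)]$. The main obstacle is the consensus-drift term, since $\nabla\TE$ itself is not bounded. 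The key idea is to unfold the definition of $x_\alpha(\rho_{X,t})$ through an independent copy $\OX'$ of $\OX$,
\begin{equation*}
E\cdot\EE\bigl[e^{-\alpha\TE(\OX)}\nabla\TE(\OX)\cdot(x_\alpha(\rho_{X,t})-\OX)\bigr]
= \EE\EE'\bigl[e^{-\alpha\TE(\OX)}e^{-\alpha\TE(\OX')}\nabla\TE(\OX)\cdot(\OX'-\OX)\bigr],
\end{equation*}
and then to symmetrize in $(\OX,\OX')$, which eliminates the unbounded factor:
\begin{equation*}
= -\tfrac{1}{2}\EE\EE'\bigl[e^{-\alpha\TE(\OX)}e^{-\alpha\TE(\OX')}(\nabla\TE(\OX)-\nabla\TE(\OX'))\cdot(\OX-\OX')\bigr].
\end{equation*}
Since $\|\nabla^2\TE\|_\infty\le C_\TE$ implies $|(\nabla\TE(\OX)-\nabla\TE(\OX'))\cdot(\OX-\OX')|\le C_\TE|\OX-\OX'|^2$, bounding both exponentials by $e^{-\alpha\underline\TE}$ and using $\EE\EE'[|\OX-\OX'|^2]=2\EE[|\OX-\EE\OX|^2]$ yields $|E\cdot\EE[e^{-\alpha\TE}\nabla\TE\cdot(x_\alpha-\OX)]|\le C_\TE e^{-2\alpha\underline\TE}\EE[|\OX_t-\EE\OX_t|^2]$. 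A final appeal to Lemma~\ref{lem:equivalence_H} in the form $|\OX_t-\EE\OX_t|^2\le 2(2m/\gamma)^2\CH(t)$ turns this into $\tfrac{4\alpha\lambda C_\TE e^{-2\alpha\underline\TE}}{m}(2m/\gamma)^2\EE[\CH(t)]$ after multiplication by $2\alpha\lambda/m$. Summing the two estimates gives the claimed inequality (with constant at most as large as the stated $1+2(\lambda/m)(2m/\gamma)^2$). The main technical obstacle is precisely this step: without the symmetrization trick one is forced to invoke $|\nabla\TE|^2\le 2C_\TE(\TE-\underline\TE)$, which produces terms of constant order in $\EE[\CH(t)]$ and fails to close the estimate.
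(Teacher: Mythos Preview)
Your proposal is correct and follows essentially the same architecture as the paper's proof: compute $E'$ and $E''$ via It\^o, recognize the $-\tfrac{\gamma}{m}E'$ term inside $E''$, discard the nonnegative pieces $2(E')^2$ and $\alpha^2\EE[e^{-\alpha\TE}(\nabla\TE\cdot\OV)^2]$, bound the Hessian-velocity term via \ref{asm:Hessian}, and finally reduce the consensus-drift term to $\EE[\absnormal{\OX_t-\EE\OX_t}^2]$ before invoking Lemma~\ref{lem:equivalence_H}.

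The only substantive difference is in how the consensus-drift term is handled. The paper exploits the centering identity $\EE[e^{-\alpha\TE(\OX_t)}\langle\nabla\TE(x_\alpha),\OX_t-x_\alpha\rangle]=0$ (which is immediate from the definition of $x_\alpha$ as a $\omega_\alpha^\TE$-weighted mean), so that $T_2$ becomes $-\EE[e^{-\alpha\TE}\langle\nabla\TE(\OX_t)-\nabla\TE(x_\alpha),\OX_t-x_\alpha\rangle]$, and then passes through the estimate~\eqref{eq:boundX-consensuspoint} for $\EE[\absnormal{\OX_t-x_\alpha}^2]$. Your independent-copy symmetrization achieves the same cancellation of the unbounded $\nabla\TE$ factor by a different but equally valid device, and in fact lands directly on $\EE[\absnormal{\OX_t-\EE\OX_t}^2]$ without the detour through $\EE[\absnormal{\OX_t-x_\alpha}^2]$; this saves a factor of $2$ and explains why your constant $1+\tfrac{\lambda}{m}(2m/\gamma)^2$ is slightly sharper than the paper's $1+2\tfrac{\lambda}{m}(2m/\gamma)^2$.
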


\begin{proof}
We first note that
\begin{equation} \label{eq:evolution_expE2}
\begin{split}
	\frac{1}{2}\frac{d^2}{dt^2}\left(\EE[\exp(-\alpha\TE(\OX_t))]\right)^2
	&= \frac{d}{dt}\left(\EE[\exp(-\alpha\TE(\OX_t))] \, \frac{d}{dt}\EE[\exp(-\alpha\TE(\OX_t))]\right) \\
	&= \left(\frac{d}{dt}\EE[\exp(-\alpha\TE(\OX_t))]\right)^2
		+\EE[\exp(-\alpha\TE(\OX_t))]\frac{d^2}{dt^2}\EE[\exp(-\alpha\TE(\OX_t))] \\
	&\geq \EE[\exp(-\alpha\TE(\OX_t))]\frac{d^2}{dt^2}\EE[\exp(-\alpha\TE(\OX_t))],
\end{split}
\end{equation}	
leaving the second time-derivative of $\EE[\exp(-\alpha\TE(\OX_t))]$ to be lower bounded.
To do so, we start with its first derivative.
Applying the It\^{o}-Doeblin formula twice and noting that stochastic integrals have vanishing expectations as a consequence of the regularity obtained in Theorem~\ref{thm:wellposedness}, we have
\begin{equation} \label{eq:evolution_expE_firstderivative}
\begin{split}
	&\frac{d}{dt}\EE[\exp(-\alpha\TE(\OX_t))]
	=-\alpha\EE[\exp(-\alpha\TE(\OX_t)) \langle\nabla\TE(\OX_t),\OV_t\rangle] \\
	&\qquad=-\alpha\EE[\exp(-\alpha\TE(\OX_0))\langle\nabla\TE(\OX_0),\OV_0 \rangle]
		- \alpha \EE\left[ \int_0^t d\left\langle \exp(-\alpha\TE(\OX_s))\nabla \TE(\OX_s), \OV_s \right\rangle \right] \\
	&\qquad=-\alpha\EE[\exp(-\alpha\TE(\OX_0))\langle\nabla\TE(\OX_0),\OV_0 \rangle]
		- \alpha \EE\left[\int_0^t \left\langle \exp(-\alpha\TE(\OX_s)) \OV_s, \nabla^2 \TE(\OX_s) \OV_s \right\rangle ds\right] \\
		&\qquad\quad\, + \alpha^2 \EE\left[\int_0^t \exp(-\alpha\TE(\OX_s)) \abs{\left\langle\nabla\TE(\OX_s), \OV_s \right\rangle}^2 ds\right]
		-\alpha \EE\left[\int_0^t \exp(-\alpha\TE(\OX_s)) \left\langle \nabla \TE(\OX_s), -\frac{\gamma}{m}\OV_s \right\rangle ds\right] \\
		&\qquad\quad\, -\alpha \EE\left[\int_0^t \exp(-\alpha\TE(\OX_s)) \left\langle \nabla \TE(\OX_s), \frac{\lambda}{m}(x_\alpha(\rho_{X,s})-\OX_s) \right\rangle ds \right].
\end{split}
\end{equation}
Differentiating  both sides of \eqref{eq:evolution_expE_firstderivative} with respect to the time~$t$ yields
\begin{equation} \label{eq:evolution_expE_secondderivative}
\begin{split}
	\frac{d^2}{dt^2}\EE[\exp(-\alpha\TE(\OX_t))]
	&=-\alpha \EE[\langle  \exp(-\alpha\TE(\OX_t))\OV_t,\nabla^2\TE(\OX_t)\OV_t \rangle]
		+ \alpha^2 \EE[\exp(-\alpha\TE(\OX_t)) \absnormal{\langle \nabla\TE(\OX_t), \OV_t \rangle}^2]\\
	&\quad\, + \frac{\alpha\gamma}{m} \EE[\exp(-\alpha\TE(\OX_t)) \langle \nabla\TE(\OX_t), \OV_t \rangle ]
		- \frac{\alpha\lambda}{m} \EE[ \exp(-\alpha\TE(\OX_t)) \langle \nabla \TE(\OX_t), x_\alpha(\rho_{X,t})-\OX_t \rangle]\\
	& \geq -\frac{\gamma}{m} \frac{d}{dt}\EE[\exp(-\alpha\TE(\OX_t))]
		-\alpha\underbrace{\EE[\langle \exp(-\alpha\TE(\OX_t))\OV_t,\nabla^2\TE(\OX_t)\OV_t \rangle]}_{T_1}\\
	&\quad\, -\frac{\alpha\lambda}{m}\underbrace{\EE[\exp(-\alpha\TE(\OX_t)) \langle \nabla\TE(\OX_t), x_\alpha(\rho_{X,t})-\OX_t \rangle]}_{T_2},
\end{split}
\end{equation}
where we employed the first line of~\eqref{eq:evolution_expE_firstderivative} in the last step.
It remains to upper bound the terms~$T_1$ and~$T_2$.
Making use of Assumptions~\ref{asm:zero_global} and~\ref{asm:Hessian}, we immediately obtain
\begin{equation} \label{eq:auxiliary_T1}
	T_1
	\leq \EE[\exp(-\alpha\TE(\OX_t)) \normnormal{\nabla^2 \TE}_\infty \absnormal{\OV_t}^2]
	\leq e^{-\alpha \underline{\TE}} C_\TE \EE[\absnormal{\OV_t}^2].
\end{equation}
For $T_2$, again under Assumptions~\ref{asm:zero_global} and~\ref{asm:Hessian}, we first note that 
\begin{equation} \label{eq:auxiliary_T2_prel}
\begin{split}
	T_2
	= -\EE\!\left[\exp(-\alpha\TE(\OX_t))\left\langle \nabla \TE(\OX_t)-\nabla \TE(x_\alpha(\rho_{X,t})), \OX_t-x_\alpha(\rho_{X,t}) \right\rangle\right] 
	\leq e^{-\alpha\underline\TE} C_\TE\EE[\absnormal{\OX_t-x_\alpha(\rho_{X,t})}^2],
\end{split}
\end{equation}
where the equality is a consequence of $\EE[\exp(-\alpha\TE(\OX_t))\langle\nabla\TE(x_\alpha(\rho_{X,t})), \OX_t-x_\alpha(\rho_{X,t})\rangle]=0$, which follows from the definition of~$x_\alpha(\rho_{X,t})$.
Bounding $\EE[\absnormal{\OX_t-x_\alpha(\rho_{X,t})}^2]$ as in~\eqref{eq:boundX-consensuspoint} we can further bound \eqref{eq:auxiliary_T2_prel} as
\begin{equation} \label{eq:auxiliary_T2}
	T_2
	\leq e^{-
		\alpha\underline \TE}C_\TE\EE[\absnormal{\OX_t-x_\alpha(\rho_{X,t})}^2]
	\leq 2e^{-2\alpha\underline \TE}C_\TE\frac{\EE[\absnormal{\OX_t-\EE[\OX_t]}^2]}{\EE[\exp(-\alpha\TE(\OX_t))]}.
\end{equation}
Collecting the estimates~\eqref{eq:auxiliary_T1} and~\eqref{eq:auxiliary_T2} within~\eqref{eq:evolution_expE_secondderivative} and inserting the result into~\eqref{eq:evolution_expE2} give
\begin{equation*}
\begin{split}
	\frac{1}{2}\frac{d^2}{dt^2}\left(\EE[\exp(-\alpha\TE(\OX_t))]\right)^2
	& \geq -\frac{\gamma}{m}\EE[\exp(-\alpha\TE(\OX_t))] \frac{d}{dt}\EE[\exp(-\alpha\TE(\OX_t))] \\
	&\quad\, -\EE[\exp(-\alpha\TE(\OX_t))]\alpha C_\TE e^{-\alpha \underline{\TE}} \EE [\absnormal{\OV_t}^2]
	 -\frac{2\alpha\lambda}{m} e^{-2\alpha\underline \TE}C_\TE\EE[\absnormal{\OX_t-\EE[\OX_t]}^2] \\
	 & \geq -\frac{\gamma}{2m} \frac{d}{dt} \left(\EE[\exp(-\alpha\TE(\OX_t))]\right)^2 
	 	-\alpha e^{-2\alpha \underline{\TE}} C_\TE \left(\EE [\absnormal{\OV_t}^2]+\frac{2\lambda}{m} \EE[\absnormal{\OX_t-\EE[\OX_t]}^2]\right),
\end{split}
\end{equation*}
which yields the statement after employing the lower bound of~\eqref{eq:equivalence_H} as in Lemma~\ref{lem:equivalence_H}.
\end{proof}

We are now ready to state and prove the main result about the convergence of the mean-field PSO dynamics~\eqref{eq:PSO_without_memory_mf} without memory mechanisms to the global minimizer~$x^*$.

\begin{thm} \label{thm:convergence}
	Let $\TE$ satisfy Assumptions~\ref{asm:zero_global}--\ref{asm:Hessian} and let $(\OX_t,\OV_t)_{t\geq 0}$ be a solution to the nonlinear SDE~\eqref{eq:PSO_without_memory_mf}.
	Moreover, let us assume the well-preparation of the initial datum~$\OX_0$ and~$\OV_0$ in the sense that
	\begin{enumerate}[label=P\arabic*,labelsep=10pt,leftmargin=35pt]
		\item\label{asm:well_preparedness_parameters}
			$\mu>0$ with
			\begin{equation*}
				\mu:=\frac{\lambda\gamma}{2m^2}-\left(\frac{2\lambda^2}{\gamma m}+\frac{\sigma^2}{m^2}\right)\frac{4e^{-\alpha\underline\TE}}{\EE[\exp(-\alpha\TE(\OX_0))]},
			\end{equation*}
		\item\label{asm:well_preparedness_initial}
			it holds
			\begin{equation*}
				\qquad\quad
				\frac{m\alpha}{2\gamma} \frac{\left(\EE[\left\langle\exp(-\alpha\TE(\OX_0))\nabla\TE(\OX_0), \OV_0\right\rangle]\right)_+}{\EE[\exp(-\alpha\TE(\OX_0))]}
				+\frac{\alpha C_\TE}{\chi(\frac{\gamma}{m}-\chi)} \left(1+\frac{8m\lambda}{\gamma^2}\right) \frac{\EE[\CH(0)]}{\left(\EE[\exp(-\alpha(\TE(\OX_0)-\underline{\TE}))]\right)^2}
				< \frac{3}{16},
			\end{equation*}
			with $x_+=\max\{x,0\}$ for $x\in\RR$ denoting the positive part and where
			\begin{equation*}
				\chi := \frac{2}{3}\frac{\min\{\gamma/m,\mu\}}{\big(\!\left(\gamma/(2m)\right)^2+1\big)}.
			\end{equation*}
	\end{enumerate}
	Then $\EE[\CH(t)]$ with $\CH$ as defined in Equation~\eqref{eq:H} converges exponentially fast with rate~$\chi$ to $0$ as $t\to \infty$.
	Moreover, there exists  some $\tilde x$, which may depend on $\alpha$ and $f_0$, such that $\EE[\OX_t]\to \tilde x$ and $x_\alpha(\rho_{X,t})\to \tilde x$ exponentially fast with rate~$\chi/2$ as $t\to \infty$.
	Eventually, for any given accuracy~$\varepsilon>0$, there exists $\alpha_0>0$, such that for all $\alpha>\alpha_0$, $\tilde x$ satisfies
	\begin{equation*}
		\TE(\tilde x)-\underline{\TE} \leq \varepsilon.
	\end{equation*}
	If $\TE$ additionally satisfies Assumption~\ref{asm:icp}, we have $\abs{\tilde x-x^*}\leq\varepsilon^\nu/\eta$.
\end{thm}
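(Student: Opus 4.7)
The plan is to run a standard bootstrap/continuity argument: define
\begin{equation*}
T := \sup\Big\{t \geq 0 \;:\; \EE[\exp(-\alpha\TE(\OX_s))] \geq \tfrac{1}{2}\EE[\exp(-\alpha\TE(\OX_0))] \text{ for all } s \in [0,t]\Big\},
\end{equation*}
which is strictly positive by continuity of $t \mapsto \EE[\exp(-\alpha\TE(\OX_t))]$ (a consequence of the regularity in Theorem~\ref{thm:wellposedness}). On $[0,T]$ the prefactor in Lemma~\ref{lem:evolution_H} is bounded below by $\mu/2$ thanks to \ref{asm:well_preparedness_parameters}, so combining Lemma~\ref{lem:evolution_H} with the two-sided bound in Lemma~\ref{lem:equivalence_H} yields
\begin{equation*}
\tfrac{d}{dt}\EE[\CH(t)] \leq -\chi\, \EE[\CH(t)] \quad\text{on } [0,T],
\end{equation*}
with $\chi$ as in the statement. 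Gr\"onwall gives $\EE[\CH(t)] \leq \EE[\CH(0)] e^{-\chi t}$ throughout $[0,T]$.

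The main obstacle is then showing $T = \infty$. I would argue by contradiction, assuming $T < \infty$. Writing $E(t) := \bigl(\EE[\exp(-\alpha\TE(\OX_t))]\bigr)^2$, Lemma~\ref{lem:evolution_expE2} reads $E''(t) + \tfrac{\gamma}{m} E'(t) \geq -K\, \EE[\CH(t)]$ with $K := 4\alpha e^{-2\alpha\underline\TE} C_\TE\bigl(1 + 8m\lambda/\gamma^2\bigr)$. Multiplying by the integrating factor $e^{\gamma t/m}$, inserting the exponential decay of $\EE[\CH(\cdot)]$ obtained above (note $\chi < \gamma/m$ by definition of $\chi$), and integrating twice on $[0,T]$, one gets
\begin{equation*}
E(T) \geq E(0) + \tfrac{m}{\gamma}\bigl(1-e^{-\gamma T/m}\bigr) E'(0) - \tfrac{K \EE[\CH(0)]}{\chi(\gamma/m - \chi)}.
\end{equation*}
The explicit expression $E'(0) = -2\alpha\, \EE[\exp(-\alpha\TE(\OX_0))]\, \EE[\exp(-\alpha\TE(\OX_0))\langle\nabla\TE(\OX_0),\OV_0\rangle]$ combined with $(\cdot)_+$-splitting and assumption \ref{asm:well_preparedness_initial} yields $E(T) > \tfrac{1}{4} E(0)$, i.e., $\EE[\exp(-\alpha\TE(\OX_T))] > \tfrac{1}{2}\EE[\exp(-\alpha\TE(\OX_0))]$, contradicting the definition of $T$. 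Hence $T = \infty$ and $\EE[\CH(t)] \leq \EE[\CH(0)] e^{-\chi t}$ for all $t \geq 0$.

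From exponential decay of $\EE[\CH(t)]$ together with Lemma~\ref{lem:equivalence_H}, both $\EE[|\OX_t-\EE[\OX_t]|^2]$ and $\EE[|\OV_t|^2]$ decay at rate $\chi$. Using $\tfrac{d}{dt}\EE[\OX_t] = \EE[\OV_t]$ and Jensen's inequality $|\EE[\OV_t]| \leq \sqrt{\EE[|\OV_t|^2]} \lesssim e^{-\chi t/2}$, the curve $t \mapsto \EE[\OX_t]$ is Cauchy, hence converges to some $\tilde x \in \RR^d$ at rate $\chi/2$. The bound \eqref{eq:boundX-consensuspoint}, which remains valid on $[0,\infty)$ since $\EE[\exp(-\alpha\TE(\OX_t))]$ is bounded below, shows $|x_\alpha(\rho_{X,t}) - \EE[\OX_t]|^2 \leq \EE[|\OX_t - x_\alpha(\rho_{X,t})|^2] \lesssim e^{-\chi t}$, so $x_\alpha(\rho_{X,t}) \to \tilde x$ at rate $\chi/2$ as well.

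To identify $\tilde x$ as an approximate minimizer, I would pass to the limit $t\to\infty$ in the inequality $\EE[\exp(-\alpha\TE(\OX_t))] \geq \tfrac{1}{2}\EE[\exp(-\alpha\TE(\OX_0))]$. Since $\OX_t \to \tilde x$ in $L^2$ and $\exp(-\alpha\TE(\cdot))$ is bounded by $e^{-\alpha\underline\TE}$ and continuous (\ref{asm:Hessian}), dominated convergence yields $\exp(-\alpha\TE(\tilde x)) \geq \tfrac{1}{2}\EE[\exp(-\alpha\TE(\OX_0))]$, hence
\begin{equation*}
\TE(\tilde x) - \underline\TE \leq -\tfrac{1}{\alpha}\log\EE[\exp(-\alpha(\TE(\OX_0)-\underline\TE))] + \tfrac{\log 2}{\alpha}.
\end{equation*}
By the Laplace principle \eqref{eq:laplace_principle} applied to the law of $\OX_0$ (which has $x^* \in \supp f_0$ under the well-preparation condition), the right-hand side tends to $0$ as $\alpha \to \infty$, so choosing $\alpha_0$ large enough gives $\TE(\tilde x)-\underline\TE \leq \varepsilon$. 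The final assertion $|\tilde x - x^*| \leq \varepsilon^\nu/\eta$ is then an immediate application of the inverse continuity \ref{asm:icp}.
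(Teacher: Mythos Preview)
Your proof is correct and follows essentially the same route as the paper's: the same continuity/bootstrap argument on $T$, the same use of Lemmas~\ref{lem:evolution_H} and~\ref{lem:evolution_expE2} via an integrating factor and double integration, and the same Laplace-principle conclusion. One minor slip: on $[0,T]$ the prefactor in Lemma~\ref{lem:evolution_H} is bounded below by $\mu$, not $\mu/2$ (since $\EE[\exp(-\alpha\TE(\OX_t))]\geq \tfrac{1}{2}\EE[\exp(-\alpha\TE(\OX_0))]$ exactly doubles the $2e^{-\alpha\underline\TE}$ to the $4e^{-\alpha\underline\TE}$ appearing in the definition of $\mu$), which is precisely what is needed to recover the rate $\chi$ as stated.
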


\begin{remark} \label{remark:parameter_choice2}
	As suggested in Remark~\ref{remark:parameter_choice}, Theorem~\ref{thm:convergence} traces back the evolution of $\EE[\exp(-\alpha\TE(\OX_t))]$ to its initial state by employing Lemma~\ref{lem:evolution_expE2}.
	This allows to fixate all parameters of PSO at initialization time.
	By replacing $D_t^X$ with $2D_0^X$ in~\eqref{eq:remark:parameter_choice}, the well-preparation of the parameters as in Condition~\ref{asm:well_preparedness_parameters} can be ensured.
	
	Condition~\ref{asm:well_preparedness_initial} requires the well-preparation of the initialization in the sense that the initial datum~$f_0$ is both well-concentrated and to a certain extent not too far from an optimal value.
	While this might have a locality flavor, the condition is generally fulfilled in practical applications.
	Moreover, for CBO methods there is recent work where such assumption about the initial datum is reduced to the absolute minimum~\cite{fornasier2021consensus,fornasier2021convergence}.
\end{remark}

\begin{proof}[Proof of Theorem~\ref{thm:convergence}]
Let us define the time horizon 
\begin{equation*}
	T := \inf\left\{t\geq 0:\EE[\exp(-\alpha\TE(\OX_t))] < \frac{1}{2} \EE[\exp(-\alpha\TE(\OX_0))]     \right\} \quad\text{with }\inf \emptyset=\infty.
\end{equation*}
Obviously, by continuity, $T>0$.
We claim that $T=\infty$, which we prove by contradiction in the following.
Therefore, assume $T<\infty$. Then, for $t\in[0,T]$, we have
\begin{equation*}
	\frac{\lambda\gamma}{2m^2}-\left(\frac{2\lambda^2}{\gamma m}+\frac{\sigma^2}{m^2}\right)\frac{2e^{-\alpha\underline\TE}}{\EE[\exp(-\alpha\TE(\OX_t))]}
	\geq
	\frac{\lambda\gamma}{2m^2}-\left(\frac{2\lambda^2}{\gamma m}+\frac{\sigma^2}{m^2}\right)\frac{4e^{-\alpha\underline\TE}}{\EE[\exp(-\alpha\TE(\OX_0))]}
	= \mu > 0,
\end{equation*}
where the positivity of $\mu$ is due to the well-preparation condition~\ref{asm:well_preparedness_parameters} of the initialization.
Lemma~\ref{lem:evolution_H} then provides an upper bound for the time derivative of the functional~$\EE[\CH(t)]$, 
\begin{equation} \label{eq:evolution_H_2}
\begin{split}
	\frac{d}{dt}\EE[\CH(t)]
	&\leq -\frac{\gamma}{m}\EE[\absnormal{\OV_t}^2]-\mu\EE[\absnormal{\OX_t-\EE[\OX_t]}^2]
	\leq -\min\left\{\frac{\gamma}{m},\mu\right\} \left(\EE[\absnormal{\OX_t-\EE[\OX_t]}^2] + \EE[\absnormal{\OV_t}^2]\right)\\
	&\leq -\frac{2}{3}\frac{\min\{\gamma/m,\mu\}}{\big(\!\left(\gamma/(2m)\right)^2+1\big)}\EE[\CH(t)]
	=: -\chi\EE[\CH(t)],
\end{split}
\end{equation}
where we made use of the upper bound of~\eqref{eq:equivalence_H} as in Lemma~\ref{lem:equivalence_H} in the last inequality.
The rate~$\chi$ is defined implicitly and it is straightforward to check that $\chi<\gamma/m$.
Gr\"onwall's inequality implies
\begin{equation} \label{eq:decay_H}
	\EE[\CH(t)] \leq
	\EE[\CH(0)]\exp(-\chi t).
\end{equation}
Let us now investigate the evolution of the functional~$\mc{X}(t) := \left(\EE[\exp(-\alpha\TE(\OX_t))]\right)^2$.
First note that
\begin{equation*}
	\dot{\mc{X}}(0)
	:= \frac{d}{dt} \mc{X}(t)\bigr|_{t=0}
	= -2\alpha\EE[\exp(-\alpha\TE(\OX_0))] \EE[\exp(-\alpha\TE(\OX_0))\left\langle\nabla\TE(\OX_0), \OV_0\right\rangle].
\end{equation*}
Then, an application of Gr\"{o}nwall's inequality to Equation~\eqref{eq-inequality_lemE2} from Lemma~\ref{lem:evolution_expE2} and using the explicit bound of $\EE[\CH(t)]$ from~\eqref{eq:decay_H} yields
\begin{equation*}
\begin{split}
	\frac{d}{dt} \mc{X}(t)
	&\geq \dot{\mc{X}}(0) \exp\left(-\frac{\gamma}{m}t\right)
		- 4\alpha e^{-2\alpha \underline{\TE}} C_\TE \left(1+2\frac{\lambda}{m}\left(\frac{2m}{\gamma}\right)^{2} \right) \int_0^t \EE[\CH(s)] \exp\left(-\frac{\gamma}{m}(t-s)\right)ds \\
	&\geq \dot{\mc{X}}(0) \exp\left(-\frac{\gamma}{m}t\right)
		- 4\alpha e^{-2\alpha \underline{\TE}} C_\TE \left(1+2\frac{\lambda}{m}\left(\frac{2m}{\gamma}\right)^{2} \right) \EE[\CH(0)] \frac{1}{\gamma/m-\chi} \left(\exp\left(-\chi t\right)-\exp\left(-\frac{\gamma}{m}t\right)\right) \\
	&\geq \dot{\mc{X}}(0) \exp\left(-\frac{\gamma}{m}t\right)
		- 4\alpha e^{-2\alpha \underline{\TE}} C_\TE \left(1+2\frac{\lambda}{m}\left(\frac{2m}{\gamma}\right)^{2} \right) \EE[\CH(0)] \frac{1}{\gamma/m-\chi} \exp\left(-\chi t\right),  
\end{split}
\end{equation*}
which, in turn, implies
\begin{equation*}
	\mc{X}(t)
	\geq \mc{X}(0)
		-\frac{m}{\gamma} \big(-\dot{\mc{X}}(0)\big)_{+}
		- \frac{4\alpha e^{-2\alpha \underline{\TE}} C_\TE}{\chi(\gamma/m-\chi)} \left(1+2\frac{\lambda}{m}\left(\frac{2m}{\gamma}\right)^{2} \right) \EE[\CH(0)]
\end{equation*}
after discarding the positive parts.
Recalling the definition of~$\mc{X}$ and employing the second well-preparation condition~\ref{asm:well_preparedness_initial}, we can deduce that for all $t\in[0,T]$ it holds
\begin{equation*}
\begin{split}
	\left(\EE[\exp(-\alpha\TE(\OX_t))]\right)^2
	&\geq \left(\EE[\exp(-\alpha\TE(\OX_0))]\right)^2
		-\frac{2m\alpha}{\gamma}\EE[\exp(-\alpha\TE(\OX_0))] \left(\EE[\exp(-\alpha\TE(\OX_0))\left\langle\nabla\TE(\OX_0), \OV_0\right\rangle]\right)_+ \\
	&\quad\, -\frac{4\alpha e^{-2\alpha \underline{\TE}} C_\TE}{\chi(\gamma/m-\chi)} \left(1+2\frac{\lambda}{m}\left(\frac{2m}{\gamma}\right)^{2} \right) \EE[\CH(0)]
	> \frac{1}{4} \left(\EE[\exp(-\alpha\TE(\OX_0))]\right)^2,
\end{split}
\end{equation*}
which entails that there exists $\delta>0$ such that $\EE[\exp(-\alpha\TE(\OX_t))]\geq \EE[\exp(-\alpha\TE(\OX_0))]/2$ in $[T,T+\delta]$ as well, contradicting the definition of $T$ and therefore showing the claim $T=\infty$.

\noindent
As a consequence of~\eqref{eq:decay_H} we have
\begin{equation} \label{eq:decay_H2}
	\EE[\CH(t)] \leq \EE[\CH(0)]\exp(-\chi t)
	\quad\text{and}\quad
	\EE[\exp(-\alpha\TE(\OX_t))]\geq\frac{1}{2}\EE[\exp(-\alpha\TE(\OX_0))]
\end{equation}
for all $t\geq0$.
In particular, by means of Lemma~\ref{lem:equivalence_H}, for a suitable generic constant $C>0$, we infer
\begin{equation} \label{eq:decay_rest}
	\EE[\absnormal{\OX_t-\EE[\OX_t]}^2] \leq C\exp(-\chi t),
	\quad
	\EE[\absnormal{\OV_t}^2] \leq C\exp(-\chi t),
	\quad\text{and}\quad
	\EE[\absnormal{\OX_t-x_\alpha(\rho_{X,t})}^2] \leq C\exp(-\chi t),
\end{equation}
where the last inequality uses the fact~\eqref{eq:boundX-consensuspoint}.
Moreover, with Jensen's inequality,
\begin{equation*}
	\abs{\frac{d}{dt}\EE[\OX_t]}
	\leq \EE[\absnormal{\OV_t}]
	\leq C\exp\left(-\chi t/2\right) \to 0 \quad\text{as } t\to\infty,
\end{equation*}
showing that $\EE[\OX_t]\to \tilde x$ for some $\tilde x\in\RR^d$, which may depend on $\alpha$ and $f_0$.
According to \eqref{eq:decay_rest}, $\OX_t\to \tilde x$ in mean-square and $x_\alpha(\rho_{X,t})\to \tilde x$, since
\begin{equation*}
	\absnormal{x_\alpha(\rho_{X,t})-\tilde x}^2
	\leq 3\EE[\absnormal{x_\alpha(\rho_{X,t})-\OX_t}^2]+3\EE[\absnormal{\OX_t-\EE\OX_t}^2]+3\absnormal{\EE\OX_t-\tilde x}^2 \to 0 \quad\text{as } t\to\infty.
\end{equation*}
Eventually, by continuity of the objective function~$\TE$ and  by the dominated convergence theorem, we conclude that $\EE[\exp(-\alpha\TE(\OX_t))]\to e^{-\alpha\TE(\tilde x)}$ as $t\to\infty$.
Using this when taking the limit~$t\to\infty$ in the second bound of~\eqref{eq:decay_H2} after applying the logarithm and multiplying both sides with $-1/\alpha$, we obtain
\begin{equation} \label{eq:E(tildex)}
	\TE(\tilde x) 
	= \lim_{t\to\infty}\left(-\frac{1}{\alpha}\log\EE[\exp(-\alpha\TE(\OX_t))]\right)
	\leq-\frac{1}{\alpha}\log\EE[\exp(-\alpha\TE(\OX_0))] + \frac{1}{\alpha}\log 2.
\end{equation}
The Laplace principle~\eqref{eq:laplace_principle} on the other hand allows to choose $\tilde\alpha\gg1$ large enough such that for given $\varepsilon>0$ it holds $-\frac{1}{\alpha}\log\EE[\exp(-\alpha\TE(\OX_0))]-\underline{\TE} < \varepsilon/2$
for any $\alpha\geq\tilde\alpha$.
Together with~\eqref{eq:E(tildex)}, this establishes $0 \leq \TE(\tilde x)-\underline{\TE}
	\leq \varepsilon/2 + (\log 2)/\alpha \leq \varepsilon$
for $\alpha\geq\max\{\tilde\alpha,(2\log 2)/\varepsilon\}$.
Finally, under the inverse continuity property~\ref{asm:icp} we additionally have 
$\abs{\tilde x-x^*} \leq (\TE(\tilde x)-\underline\TE)^{\nu}/\eta \leq \varepsilon^\nu/\eta$,
concluding the proof.
\end{proof}

\section{Mean-Field Analysis of PSO with Memory Effects} \label{sec:PSO_with_memory}

\noindent
Let us now turn back to the PSO dynamics~\eqref{eq:PSO_with_memory} described in the introduction.
The fundamental difference to what was analyzed in the preceding section is the presence of a personal memory of each particle, encoded through the additional state variable~$Y^i_t$.
It can be thought of as an approximation to the in-time best position~$\argmin_{\tau\leq t} \TE(X^i_\tau)$ seen by the respective particle.
Its dynamics is encoded in Equation~\eqref{eq:PSO_with_memory_Y}.

In this section we analyze~\eqref{eq:PSO_with_memory} in the large particle limit, i.e., through its mean-field limit~\eqref{eq:PSO_with_memory_mf}.

\subsection{Well-Posedness of PSO with Memory Effects} \label{sec:wellposedness_with_memory}

Ensured by a sufficiently regularized implementation of the local best position~$\OY$, we can show the well-posedness of the mean-field PSO dynamics~\eqref{eq:PSO_with_memory_mf}, respectively, the associated Vlasov-Fokker-Planck equation~\eqref{eq:PSO_with_memory_weak}.

\begin{thm} \label{thm:wellposedness_memory}
	Let $\TE$ satisfy Assumptions~\ref{asm:zero_global}--\ref{asm:growth} and let $m,\gamma,\lambda_1,\lambda_2,\sigma_1,\sigma_2,\alpha,\beta,\theta,\kappa,T>0$.
	If $(\OX_0,\OY_0,\OV_0)$ is distributed according to $f_0\in \mc{P}_4(\RR^{3d})$,
	then the nonlinear SDE~\eqref{eq:PSO_with_memory_mf} admits a unique strong solution up to time~$T$ with $\mc{C}([0,T],\RR^{d})\times\mc{C}([0,T],\RR^{d})\times\mc{C}([0,T],\RR^{d})$-valued paths.
	The associated law~$f$ has regularity~$\mc{C}([0,T],\mc{P}(\RR^{3d}))$ and is a weak solution to the Vlasov-Fokker-Planck equation~\eqref{eq:PSO_with_memory_weak}.
	In particular,
	\begin{equation} \label{eq:wellposedness_regularity_memory}
		\sup_{t\in[0,T]} \EE[\absnormal{\OX_t}^4+\absnormal{\OY_t}^4+\absnormal{\OV_t}^4]\leq \left(1+3\EE[\absnormal{\OX_0}^4+\absnormal{\OY_0}^4+\absnormal{\OV_0}^4]\right)e^{CT}
	\end{equation}
	for some constant $C>0$ depending only on $m,\gamma,\lambda_1,\lambda_2,\sigma_1,\sigma_2,\alpha,\beta,\theta,\kappa,c_\TE, R$ and $L_\TE$.
\end{thm}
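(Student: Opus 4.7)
The plan is to establish existence via a Leray-Schauder fixed point argument on the nonlinearity encoded by the consensus term $y_\alpha(\rho_{Y,t})$, and then obtain uniqueness and the moment bound~\eqref{eq:wellposedness_regularity_memory} via direct It\^o/Gr\"onwall computations. Concretely, for a fixed candidate curve $u\in\mc{C}([0,T];\RR^d)$, I consider the linearized SDE obtained from~\eqref{eq:PSO_with_memory_mf} by substituting $u_t$ in place of $y_\alpha(\rho_{Y,t})$. The $\OY$-drift is bounded and Lipschitz in $(\OX,\OY)$ because $S^{\beta,\theta}$ lies in $[\theta,2+\theta]$ and has locally bounded gradient via the local Lipschitz property~\ref{asm:Lipschitz} of $\TE$, while the $\OV$-drift and the diffusion matrices are locally Lipschitz with at most linear growth in $(\OX,\OY,\OV)$. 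Standard SDE theory therefore produces a unique strong solution $(\OX^u,\OY^u,\OV^u)$ with continuous sample paths on $[0,T]$.

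The next step is a fourth-moment estimate uniform in $u$. Applying the It\^o-Doeblin formula to $\absnormal{\OX_t^u}^4+\absnormal{\OY_t^u}^4+\absnormal{\OV_t^u}^4$, using Young's inequality to absorb cross terms and controlling $\absnormal{u_t}$ together with the diffusion coefficients via the local Lipschitz property~\ref{asm:Lipschitz} and the growth assumption~\ref{asm:growth}, Gr\"onwall's lemma yields~\eqref{eq:wellposedness_regularity_memory} with a constant $C$ independent of $u$. A parallel a priori bound on $\absnormal{y_\alpha(\rho^u_{Y,t})}^2\lesssim 1+\EE[\absnormal{\OY_t^u}^2]$ (from the definition of $\omega_\alpha^\TE$ combined with~\ref{asm:growth}) ensures that the substitution loop is closed.

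With this in hand, I define the map $\mc{T}:u\mapsto\bigl(t\mapsto y_\alpha(\rho^u_{Y,t})\bigr)$ on $\mc{C}([0,T];\RR^d)$ and verify the Leray-Schauder hypotheses. (i) Self-mapping and a priori boundedness follow directly from the moment estimate. (ii) Compactness is obtained by showing that, on bounded sets of $u$, the image $\mc{T}(u)$ is uniformly H\"older continuous in $t$: differentiating $t\mapsto y_\alpha(\rho^u_{Y,t})$ using the SDE and controlling the resulting expression by the fourth-moment bound yields equicontinuity, and Arzel\`a-Ascoli applies. (iii) Continuity of $\mc{T}$ reduces to showing that if $u_n\to u$ uniformly then the laws $\rho^{u_n}_{Y,t}$ converge (in a Wasserstein sense) to $\rho^u_{Y,t}$ strongly enough to pass to the limit in the nonlinear weight $\omega_\alpha^\TE$; a synchronous coupling/Gr\"onwall argument on $\EE[\absnormal{\OX_t^{u_n}-\OX_t^u}^2+\absnormal{\OY_t^{u_n}-\OY_t^u}^2+\absnormal{\OV_t^{u_n}-\OV_t^u}^2]$ delivers this, turning local Lipschitz constants of $\TE$ and $S^{\beta,\theta}$ into effective constants on the compact sets carved out by the uniform moment bound. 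An a priori bound on any $u=\nu\mc{T}(u)$ with $\nu\in[0,1]$ completes the hypotheses and yields a fixed point, hence a strong solution to~\eqref{eq:PSO_with_memory_mf}.

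The main obstacle I anticipate is step (iii): the regularized best-position dynamics couples $\OX$ and $\OY$ through $S^{\beta,\theta}$ and hence through the only locally Lipschitz objective~$\TE$, so transferring perturbations from $u$ to the full triple requires the fourth-moment bound to promote the local Lipschitz property of~$\TE$ to an effective global one; the interplay between the $y_\alpha$-nonlinearity and the two diffusion coefficients (each scaled by $D(\cdot)$) must be handled uniformly in~$t$. For uniqueness, I would apply It\^o's formula to the squared difference of two solutions sharing the initial datum, use the Lipschitz stability of $\rho\mapsto y_\alpha(\rho)$ (cf.\ analogous estimates used in consensus-based optimization) together with the moment control, and close a Gr\"onwall inequality. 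The identification of $f\in\mc{C}([0,T];\mc{P}(\RR^{3d}))$ as a weak solution of the Vlasov-Fokker-Planck equation~\eqref{eq:PSO_with_memory_weak} then follows by pairing the It\^o-Doeblin formula against test functions $\phi\in\mc{C}_c^\infty(\RR^{3d})$ and taking expectations.
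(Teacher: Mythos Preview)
Your proposal is correct and follows essentially the same Leray--Schauder fixed-point strategy as the paper: freeze the nonlinearity by replacing $y_\alpha(\rho_{Y,t})$ with a given curve $u\in\mc{C}([0,T];\RR^d)$, solve the resulting (locally Lipschitz, linearly growing) SDE by standard theory, obtain fourth-moment bounds, define $\mc{T}u = y_\alpha(\rho^u_Y)$, verify the Leray--Schauder hypotheses including the uniform a~priori bound on solutions of $u=\vartheta\mc{T}u$, and conclude uniqueness via a synchronous-coupling Gr\"onwall argument. The only minor differences are that the paper obtains compactness of $\mc{T}$ via the Wasserstein stability estimate $\absnormal{y_\alpha(\rho_1)-y_\alpha(\rho_2)}\lesssim W_2(\rho_1,\rho_2)$ (from~\cite[Lemma~3.2]{carrillo2018analytical}) combined with H\"older-$1/2$ continuity of $t\mapsto\rho^u_{Y,t}$ in $W_2$, rather than by differentiating $t\mapsto y_\alpha(\rho^u_{Y,t})$ directly; and that the paper tests the weak formulation against the slightly larger class $\mc{C}^2_*(\RR^{3d})$ with controlled growth of $\nabla_v\phi$ rather than $\mc{C}_c^\infty$. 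One small imprecision: your fourth-moment bound for the auxiliary process with generic $u$ will depend on $\norm{u}_\infty$; the constant becomes independent of $u$ only after invoking $\absnormal{y_\alpha(\rho^u_{Y,t})}^2\lesssim 1+\EE[\absnormal{\OY^u_t}^2]$ at a (quasi-)fixed point, which is exactly the a~priori step you mention last.
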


\begin{proof}[Proof sketch]
	The proof follows the steps taken in~\cite[Theorems~3.1, 3.2]{carrillo2018analytical}.
	
	\noindent\textit{Step 1:} For a given function $u\in\mathcal{C}([0,T],\mathbb{R}^d)$ and an initial measure~$f_0\in\CP_4(\RR^{3d})$, according to standard SDE theory~\cite[Chapter~7]{arnold1974stochasticdifferentialequations}, we can uniquely solve the auxiliary SDE
		\begin{align*}
			d\widetilde{X}_t   &= \widetilde{V}_t \,dt,\\
			d\widetilde{Y}_{t} &= \kappa \big(\widetilde{X}_{t}-\widetilde{Y}_{t}\big)\, S^{\beta,\theta}\big(\widetilde{X}_{t}, \widetilde{Y}_{t}\big)\,dt,\\
			m\,d\widetilde{V}_{t} &=
			\begin{aligned}[t]
				 &\!-\gamma \widetilde{V}_{t} \,dt
				    + \lambda_{1}\big(\widetilde{Y}_{t}-\widetilde{X}_{t}\big)\, dt
				    +\lambda_{2}\big(u_t-\widetilde{X}_{t}\big)\, dt
				    +\sigma_{1} D\big(\widetilde{Y}_{t}-\widetilde{X}_{t}\big)\, dB_{t}^1
				    +\sigma_{2} D\big(u_t-\widetilde{X}_{t}\big)\, dB_{t}^2,
			\end{aligned}
		\end{align*}
		with initial condition~$\big(\widetilde{X}_0,\widetilde{Y}_0,\widetilde{V}_0\big) \sim f_0$
		as, due to the smoothness of~$S^{\beta,\theta}$ and Assumptions~\ref{asm:Lipschitz} and~\ref{asm:growth}, the coefficients are locally Lipschitz and have at most linear growth.
		This induces $\widetilde f_t=\mathrm{Law}\big(\widetilde{X}_t,\widetilde{Y}_t,\widetilde{V}_t\big)$.
		Moreover, the regularity of $f_0 \in \CP_4(\RR^{3d})$ allows for a moment estimate
		of the form~\eqref{eq:wellposedness_regularity_memory} and thus $\widetilde f\in\CC([0,T],\CP_4(\RR^{3d}))$.
		In what follows, $\widetilde\rho_Y$ denotes the spatial local best marginal of $\widetilde f$, i.e., $\widetilde\rho_Y(t,\,\cdot\,)=\iint_{\RR^{2d}} d\widetilde{f}(t,x, \,\cdot\,,v)$.
	
	\noindent\textit{Step 2:} Let us now define, for some constant $C>0$, the test function space
		\begin{equation} \label{eq:function_space_memory}
		\begin{split}
			\CC^2_{*}(\RR^{3d}) := \big\{\phi\in\CC^2(\RR^{3d}): \absnormal{\nabla_v\phi}\leq C\left(1+\absnormal{x}+\absnormal{y}+\absnormal{v}\right) \,\text{ and }\, \sup_{k=1,\dots,d}\norm{\partial^2_{v_kv_k} \phi}_\infty < \infty\big\}.
		\end{split}
		\end{equation}
		For some $\phi\in\CC^2_{*}(\RR^{3d})$, by the It\^o-Doeblin formula, we derive
		\begin{equation*}
		\begin{split}
			d\phi
			&= \nabla_x\phi\cdot\widetilde{V}_t\,dt 
				+ \kappa\nabla_y\phi\cdot \big(\widetilde{X}_{t}-\widetilde{Y}_{t}\big)\, S^{\beta,\theta}\big(\widetilde{X}_{t}, \widetilde{Y}_{t}\big)\,dt 
				+ \nabla_v\phi\cdot \left(-\frac{\gamma}{m} \widetilde{V}_{t}+ \frac{\lambda_{1}}{m}\big(\widetilde{Y}_{t}-\widetilde{X}_{t}\big)+\frac{\lambda_{2}}{m}\big(u_t-\widetilde{X}_{t}\big)\right)dt\\
			&\,\,+\frac{1}{2} \sum_{k=1}^d \partial^2_{v_kv_k}\phi \left(\frac{\sigma_1^2}{m^2}\big(\widetilde{Y}_{t}-\widetilde{X}_{t}\big)_k^2 + \frac{\sigma_2^2}{m^2}\big(u_t-\widetilde{X}_{t}\big)_k^2\right)dt
				+\nabla_v\phi \cdot \left(\frac{\sigma_{1}}{m} D\big(\widetilde{Y}_{t}-\widetilde{X}_{t}\big)\, dB_{t}^1 + \frac{\sigma_{2}}{m} D\big(u_t-\widetilde{X}_{t}\big)\, dB_{t}^2\right),
		\end{split}
		\end{equation*}
		where we mean $\phi\big(\widetilde{X}_t,\widetilde{Y}_t,\widetilde{V}_t\big)$ whenever we write $\phi$.
		After taking the expectation, applying Fubini's theorem and observing that the stochastic integrals vanish due to the definition of the test function space~$\CC^2_{*}(\RR^{3d})$ and the regularity~\eqref{eq:wellposedness_regularity_memory}, we observe that~$\widetilde f\in\CC([0,T],\CP_4(\RR^{3d}))$ satisfies the Vlasov-Fokker-Planck equation
		\begin{align} \label{eq:PSO_with_memory_weak_wellposedness_aux}
		\begin{aligned}
			\frac{d}{dt}\iiint_{\RR^{3d}} \phi\,d\widetilde f_t =
			& \iiint_{\RR^{3d}} v\cdot\nabla_x\phi\,d\widetilde f_t + \iiint_{\RR^{3d}} \kappa(x-y) S^{\beta,\theta}(x,y) \cdot\nabla_y\phi\,d\widetilde f_t \\
			& -\iiint_{\RR^{3d}} \left(\frac{\gamma}{m} v + \frac{\lambda_{1}}{m}\left(x-y\right) + \frac{\lambda_{2}}{m}\left(x-u_t\right)\right) \cdot\nabla_v \phi\,d\widetilde f_t \\
			&+ \iiint_{\RR^{3d}} \sum_{k=1}^d \left(\frac{\sigma_{1}^{2}}{2 m^{2}} \left(x-y\right)_k^{2} + \frac{\sigma_{2}^{2}}{2m^2} \left(x-u_t\right)_k^2\right) \cdot \partial^2_{v_kv_k}\phi\,d\widetilde f_t.
		\end{aligned}
		\end{align}
	
	\noindent\textit{Step 3:} Setting $\CT u:=y_\alpha(\widetilde\rho_Y)\in\mathcal{C}([0,T],\mathbb{R}^d)$ provides the self-mapping property of the map
		\begin{align*}
			\CT:\CC([0,T],\mathbb{R}^d)\rightarrow\CC([0,T],\RR^d), \quad u\mapsto\mathcal{T}u=y_\alpha(\widetilde\rho_Y),
		\end{align*}
		which is compact as a consequence of the stability estimate $\absnormal{y_\alpha(\widetilde\rho_{Y,t})-y_\alpha(\widetilde\rho_{Y,s})}_2 \lesssim W_2(\widetilde\rho_{Y,t},\widetilde\rho_{Y,s})$ for $\widetilde\rho_{Y,t},\widetilde\rho_{Y,s}\in\CP_4(\RR^d)$, see, e.g., \cite[Lemma~3.2]{carrillo2018analytical}, and the \mbox{H\"{o}lder-$1/2$} continuity of the Wasserstein-$2$ distance~$W_2(\widetilde\rho_{Y,t},\widetilde\rho_{Y,s})$.

	\noindent\textit{Step 4:} Then, for $u=\vartheta\CT u$ with $\vartheta\in[0,1]$, there exists $\widetilde f\in\CC([0,T],\CP_4(\RR^{3d}))$ satisfying~\eqref{eq:PSO_with_memory_weak_wellposedness_aux} with marginal~$\widetilde\rho_Y$ such that $u_t=\vartheta y_\alpha(\widetilde\rho_{Y,t})$.
		For such $u$, a uniform bound can be obtained as of Assumption~\ref{asm:growth}.
		An application of the Leray-Schauder fixed point theorem provides a solution to~\eqref{eq:PSO_with_memory_mf}.
		
	\noindent\textit{Step 5:} As for uniqueness, we assume the existence of two distinct fixed points~$u^1$ and~$u^2$ with associated processes~$\big(\widetilde{X}^1,\widetilde{Y}^1,\widetilde{V}^1\big)$ and $\big(\widetilde{X}^2,\widetilde{Y}^2,\widetilde{V}^2\big)$, respectively.
		Under the premise of the same initialization and identical Brownian motion paths, Gr\"onwall's inequality ensures that they coincide, cf.\@~\cite[Theorem 3.1]{carrillo2018analytical}.
\end{proof}

\subsection{Convergence of PSO with Memory Effects to a Global Minimizer} \label{sec:convergence_with_memory}

Analogously to Section~\ref{sec:convergence_without_memory} we define a functional~$\CH(t)$, which is analyzed in this section to eventually prove its exponential decay and thereby consensus formation at some~$\tilde x$ close to the global minimizer~$x^*$.
In addition to the requirements that the variance~$\EE[\absnormal{\OX_t-\EE[\OX_t]}^2]$ in the position and the second-order moment of the velocity~$\EE[\absnormal{\OV_t}^2]$ of the averaged particle vanish, we also expect that the particle's position~$\OX_t$ aligns with its personal best position~$\OY_t$ over time, meaning that $\EE[\absnormal{\OX_t-\OY_t}^2]$ decays to zero.
This motivates the definition
\begin{equation} \label{eq:H_memory}
\begin{split}
	\CH(t) := &\left(\frac{\gamma}{2m}\right)^2 \absnormal{\OX_t-\EE[\OX_t]}^2
		+ \frac{3}{2} \absnormal{\OV_t}^2
		+ \frac{1}{2}\left(\frac{3\lambda_1}{m}+\frac{\gamma^2}{m^2}\right)\absnormal{\OX_t-\OY_t}^2\\
		&\qquad\qquad + \frac{\gamma}{2m}\left\langle\OX_t-\EE[\OX_t], \OV_t\right\rangle
		+\frac{\gamma}{m}\left\langle\OX_t-\OY_t,\OV_t\right\rangle,
\end{split}
\end{equation}
whose last two terms are required for technical reasons.
Again, by the equivalence established in Lemma~\ref{lem:equivalence_H_memory}, proving the decay of $\EE[\CH(t)]$ directly entails the decay of $\EE[\absnormal{\OX_t-\EE[\OX_t]}^2+\absnormal{\OV_t}^2+\absnormal{\OX_t-\OY_t}^2]$ with the same rate.
\begin{lem} \label{lem:equivalence_H_memory}
	The functional $\CH(t)$ is equivalent to $\absnormal{\OX_t-\EE[\OX_t]}^2+\absnormal{\OV_t}^2+\absnormal{\OX_t-\OY_t}^2$ in the sense that
	\begin{equation} \label{eq:equivalence_H_memory}
	\begin{split}
		&\frac{1}{2}\left(\frac{\gamma}{2m}\right)^2\absnormal{\OX_t-\EE[\OX_t]}^2+\frac{1}{2}\absnormal{\OV_t}^2+\frac{3\lambda_1}{2m}\absnormal{\OX_t-\OY_t}^2
		\leq\CH(t)\\
		&\qquad\qquad\qquad\qquad\leq \frac{5}{2}\left(\left(\frac{\gamma}{2m}\right)^2+1+\frac{3\lambda_1}{m}+\frac{2\gamma^2}{m^2}\right)\left(\absnormal{\OX_t-\EE[\OX_t]}^2+\absnormal{\OV_t}^2+\absnormal{\OX_t-\OY_t}^2\right).
	\end{split}
	\end{equation}
\end{lem}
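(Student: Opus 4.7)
The plan is to reduce the equivalence \eqref{eq:equivalence_H_memory} to two applications of Young's inequality in the symmetric form $\pm 2\langle a,b\rangle \leq |a|^2+|b|^2$, used with carefully chosen scalings so that the unsigned cross terms $\tfrac{\gamma}{2m}\langle \OX_t-\EE[\OX_t],\OV_t\rangle$ and $\tfrac{\gamma}{m}\langle \OX_t-\OY_t,\OV_t\rangle$ in \eqref{eq:H_memory} are absorbed into the three positive quadratic quantities $\absnormal{\OX_t-\EE[\OX_t]}^2$, $\absnormal{\OV_t}^2$ and $\absnormal{\OX_t-\OY_t}^2$ that already appear with explicit prefactors.

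For the lower bound, I would first rewrite the cross terms as $\langle \tfrac{\gamma}{2m}(\OX_t-\EE[\OX_t]),\OV_t\rangle$ and $\langle \tfrac{\gamma}{m}(\OX_t-\OY_t),\OV_t\rangle$ and apply Young's inequality with unit weights, picking up the lower bounds $-\tfrac{1}{2}(\tfrac{\gamma}{2m})^2\absnormal{\OX_t-\EE[\OX_t]}^2-\tfrac{1}{2}\absnormal{\OV_t}^2$ and $-\tfrac{1}{2}\tfrac{\gamma^2}{m^2}\absnormal{\OX_t-\OY_t}^2-\tfrac{1}{2}\absnormal{\OV_t}^2$, respectively. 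Inserting these into \eqref{eq:H_memory} halves the coefficient $(\gamma/(2m))^2$ in front of $\absnormal{\OX_t-\EE[\OX_t]}^2$, cancels exactly the $\gamma^2/m^2$ contribution inside the bracket in front of $\absnormal{\OX_t-\OY_t}^2$ (leaving the $3\lambda_1/(2m)$), and reduces the prefactor $3/2$ of $\absnormal{\OV_t}^2$ to $1/2$. These are precisely the coefficients on the left-hand side of \eqref{eq:equivalence_H_memory}, so the tightness of the chosen weights is what makes the lower bound work.

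For the upper bound, I would reverse the inequality with identical scalings, which contributes an extra $\tfrac{1}{2}(\tfrac{\gamma}{2m})^2\absnormal{\OX_t-\EE[\OX_t]}^2+\tfrac{1}{2}\absnormal{\OV_t}^2$ from the first cross term and $\tfrac{1}{2}\tfrac{\gamma^2}{m^2}\absnormal{\OX_t-\OY_t}^2+\tfrac{1}{2}\absnormal{\OV_t}^2$ from the second. Adding them to $\CH(t)$ gives an upper bound
\begin{equation*}
\CH(t) \leq \frac{3}{2}\left(\frac{\gamma}{2m}\right)^2\absnormal{\OX_t-\EE[\OX_t]}^2 + \frac{5}{2}\absnormal{\OV_t}^2 + \left(\frac{3\lambda_1}{2m}+\frac{\gamma^2}{m^2}\right)\absnormal{\OX_t-\OY_t}^2,
\end{equation*}
and each of these three coefficients is at most $\tfrac{5}{2}\big((\gamma/(2m))^2+1+3\lambda_1/m+2\gamma^2/m^2\big)$, yielding the uniform constant on the right-hand side of \eqref{eq:equivalence_H_memory}.

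The proof is essentially bookkeeping, and the only subtlety is to pick the Young weights so that the resulting lower bound is exactly the coefficients claimed; no analytic difficulty arises, since no use is made of the SDE~\eqref{eq:PSO_with_memory_mf} itself, only of the pointwise algebraic definition \eqref{eq:H_memory}.
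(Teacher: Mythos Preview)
Your proof is correct and follows the same approach as the paper, which attributes the equivalence to Young's inequality without spelling out the details. Your choice of weights is exactly the right one to recover the stated constants on both sides.
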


We now derive an evolution inequality of the quantity $\EE[\CH(t)]$.
\begin{lem} \label{lem:evolution_H_memory}
	Let $\TE$ satisfy Assumptions~\ref{asm:zero_global}--\ref{asm:growth} and let $(\OX_t,\OY_t,\OV_t)_{t\geq 0}$ be a solution to the nonlinear SDE~\eqref{eq:PSO_with_memory_mf}.
	Then $\EE[\CH(t)]$ with $\CH$ as defined in~\eqref{eq:H_memory} satisfies
	\begin{equation} \label{eq:evolution_H_memory}
	\begin{split}
		\frac{d}{dt}\EE[\CH(t)] \leq\,
		&-\frac{\gamma}{2m}\EE[\absnormal{\OV_t}^2]
		-\left(\frac{(\lambda_1+2\lambda_2)\gamma}{(2m)^2}-\left(\frac{9\lambda_2^2}{\gamma m}+\frac{3\sigma_2^2}{m^2}+\frac{3\lambda_1\gamma}{(2m)^2}\right)\frac{6e^{-\alpha\underline\TE}}{\EE[\exp(-\alpha\TE(\OY_t))]}\right)\EE[\absnormal{\OX_t-\EE[\OX_t]}^2]\\
		&-\bigg(\frac{(\lambda_1+\lambda_2)\gamma}{m^2}+\kappa\theta\left(\frac{3\lambda_1}{m}+\frac{\gamma^2}{m^2}\right)-\frac{8\kappa^2\gamma}{m}-\frac{\lambda_2^2\gamma}{2m^2\lambda_1}-\frac{3\sigma_1^2}{2m^2}\\
		&\qquad-\left(\frac{9\lambda_2^2}{\gamma m}+\frac{3\sigma_2^2}{m^2}\right)-\left(\frac{9\lambda_2^2}{\gamma m}+\frac{3\sigma_2^2}{m^2}+\frac{3\lambda_1\gamma}{(2m)^2}\right)\frac{12e^{-\alpha\underline\TE}}{\EE[\exp(-\alpha\TE(\OY_t))]}\bigg)\EE[\absnormal{\OX_t-\OY_t}^2].
	\end{split}
	\end{equation}
\end{lem}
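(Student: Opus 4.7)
The plan is to mimic the proof of Lemma~\ref{lem:evolution_H} by differentiating each of the five summands of $\CH(t)$ in turn and then combining them with the prescribed weights. Writing $\delta\OX_t := \OX_t-\EE[\OX_t]$ as before, integration by parts gives $\frac{d}{dt}\EE[\absnormal{\delta\OX_t}^2]=2\EE[\langle\delta\OX_t,\OV_t\rangle]$. Applying the It\^o-Doeblin formula to $\absnormal{\OV_t}^2$, together with the identity $\absnormal{D(a)}^2=\absnormal{a}^2$ valid for anisotropic diffusion, yields the friction dissipation $-\frac{2\gamma}{m}\EE[\absnormal{\OV_t}^2]$, two cross-drift terms involving $\OY_t-\OX_t$ and $y_\alpha(\rho_{Y,t})-\OX_t$, and the two diffusion contributions $\frac{\sigma_1^2}{m^2}\EE[\absnormal{\OX_t-\OY_t}^2]$ and $\frac{\sigma_2^2}{m^2}\EE[\absnormal{\OX_t-y_\alpha(\rho_{Y,t})}^2]$. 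For $\absnormal{\OX_t-\OY_t}^2$, It\^o produces $\frac{d}{dt}\EE[\absnormal{\OX_t-\OY_t}^2]=2\EE[\langle\OX_t-\OY_t,\OV_t\rangle]-2\kappa\EE[\absnormal{\OX_t-\OY_t}^2 S^{\beta,\theta}(\OX_t,\OY_t)]$, and the pointwise lower bound $S^{\beta,\theta}\geq\theta$ extracts the strictly dissipative piece $-2\kappa\theta\EE[\absnormal{\OX_t-\OY_t}^2]$. Finally, It\^o on $\langle\delta\OX_t,\OV_t\rangle$ and on $\langle\OX_t-\OY_t,\OV_t\rangle$ (exploiting the zero quadratic variation of $\OX$ and $\OY$) produces expressions containing $\EE[\absnormal{\OV_t}^2]$, a $-\frac{\gamma}{m}$ multiple of the respective bracket, further drift cross terms, and, in the second case, the extra expression $-\kappa\EE[\langle\OX_t-\OY_t,\OV_t\rangle S^{\beta,\theta}]$.

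Taking the weighted combination dictated by $\CH(t)$, the pure bracket terms $\langle\delta\OX_t,\OV_t\rangle$ and $\langle\OX_t-\OY_t,\OV_t\rangle$ cancel between the derivatives of the quadratic summands and of the bilinear summands; this dictates the specific weights $(\gamma/(2m))^2$ and $\tfrac{1}{2}(3\lambda_1/m+\gamma^2/m^2)$ appearing in~\eqref{eq:H_memory}. The remaining mixed expressions are split by Young's inequality with suitably chosen free parameters so that only multiples of $\EE[\absnormal{\OV_t}^2]$, $\EE[\absnormal{\delta\OX_t}^2]$, $\EE[\absnormal{\OX_t-\OY_t}^2]$, and $\EE[\absnormal{\OX_t-y_\alpha(\rho_{Y,t})}^2]$ survive. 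The weight $3/2$ in front of $\absnormal{\OV_t}^2$ is tuned so that, after absorbing these Young contributions into the friction dissipation $-(3\gamma/m)\EE[\absnormal{\OV_t}^2]$, a net $-\frac{\gamma}{2m}\EE[\absnormal{\OV_t}^2]$ remains. Using $|S^{\beta,\theta}|\leq 2+\theta$ and Young's inequality on the cross term $-\kappa\EE[\langle\OX_t-\OY_t,\OV_t\rangle S^{\beta,\theta}]$ generates precisely the $-\frac{8\kappa^2\gamma}{m}$ summand appearing inside the coefficient of $\EE[\absnormal{\OX_t-\OY_t}^2]$.

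The $y_\alpha(\rho_{Y,t})$-dependent contributions are then converted into the form of the statement by arguing as in~\eqref{eq:boundX-consensuspoint}. Jensen's inequality gives $\EE[\absnormal{\OY_t-y_\alpha(\rho_{Y,t})}^2]\leq 2e^{-\alpha\underline\TE}\EE[\absnormal{\OY_t-\EE[\OY_t]}^2]/\EE[\exp(-\alpha\TE(\OY_t))]$, and decomposing $\OY_t-\EE[\OY_t]=(\OY_t-\OX_t)-\EE[\OY_t-\OX_t]+(\OX_t-\EE[\OX_t])$ and applying Jensen once more yields $\EE[\absnormal{\OY_t-\EE[\OY_t]}^2]\leq 3\EE[\absnormal{\delta\OX_t}^2]+6\EE[\absnormal{\OX_t-\OY_t}^2]$. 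Combining this with the triangle bound $\absnormal{\OX_t-y_\alpha(\rho_{Y,t})}^2\leq 2\absnormal{\OX_t-\OY_t}^2+2\absnormal{\OY_t-y_\alpha(\rho_{Y,t})}^2$ produces exactly the prefactors $6e^{-\alpha\underline\TE}/\EE[\exp(-\alpha\TE(\OY_t))]$ and $12e^{-\alpha\underline\TE}/\EE[\exp(-\alpha\TE(\OY_t))]$ multiplying, respectively, $\EE[\absnormal{\delta\OX_t}^2]$ and $\EE[\absnormal{\OX_t-\OY_t}^2]$ in~\eqref{eq:evolution_H_memory}.

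The principal obstacle is bookkeeping rather than conceptual: juggling the many Young's-inequality free parameters across the five derivative contributions so that the resulting coefficients assemble exactly into the two parenthesized factors stated in~\eqref{eq:evolution_H_memory}, with the precise combinations of $\lambda_1,\lambda_2,\sigma_1,\sigma_2,\kappa,\theta$ and $\gamma/m$. No qualitatively new phenomenon appears beyond the without-memory argument; the added difficulty is solely that five (rather than three) summands must be combined and that $\OY_t$ couples both into the drift of $\OV_t$ and into the definition of the momentaneous consensus point $y_\alpha(\rho_{Y,t})$, necessitating the intermediate decomposition of $\EE[\absnormal{\OY_t-\EE[\OY_t]}^2]$ in terms of the two variance-like quantities that $\CH(t)$ actually controls.
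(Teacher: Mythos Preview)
Your proposal is correct and follows essentially the same approach as the paper: compute the It\^o-Doeblin derivatives of the five summands of $\CH$, combine them with the weights in~\eqref{eq:H_memory} so that the cross terms $\langle\delta\OX_t,\OV_t\rangle$ and $\langle\OX_t-\OY_t,\OV_t\rangle$ cancel, apply Young's inequality (with the final choice $\varepsilon=3\lambda_2/\gamma$), and then estimate $\EE[\absnormal{\OY_t-y_\alpha(\rho_{Y,t})}^2]$ via Jensen and the decomposition $\OY_t-\EE[\OY_t]=(\OY_t-\OX_t)-\EE[\OY_t-\OX_t]+\delta\OX_t$ to produce the factors $6$ and $12$. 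The only cosmetic discrepancy is that the paper uses the cruder bound $S^{\beta,\theta}<4$ (rather than $\leq 2+\theta$) to obtain exactly the constant $8\kappa^2$; your tighter bound would yield the same inequality a fortiori.
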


\begin{proof}
	Let us write $\delta\OX_t:=\OX_t-\EE[\OX_t]$ for short and note that the integration by parts formula gives
	\begin{equation} \label{eq:time_evolution_variance_memory}
		\frac{d}{dt} \EE[\absnormal{\delta\OX_t}^2]
			=2\EE[\left\langle\delta\OX_t, \OV_t\right\rangle].
	\end{equation}
	Observe that the stochastic integrals have vanishing expectations as a consequence of the regularity obtained in Theorem~\ref{thm:wellposedness_memory}. An application of the It\^{o}-Doeblin formula and Young's inequality yields 
	\begin{equation} \label{eq:time_evolution_V2_memory}
	\begin{split}
		\frac{d}{dt} \EE[\absnormal{\OV_t}^2]
		&= -\frac{2\gamma}{m}\EE[\absnormal{\OV_t}^2]
			+\frac{2\lambda_1}{m}\EE[\left\langle\OV_t,\OY_t-\OX_t\right\rangle]
			+\frac{2\lambda_2}{m}\EE[\left\langle\OV_t,y_\alpha(\rho_{Y,t})-\OX_t\right\rangle]
			+\frac{\sigma_1^2}{m^2}\EE[\absnormal{\OY_t-\OX_t}^2]\\
			&\quad\,
			+\frac{\sigma_2^2}{m^2}\EE[\absnormal{y_\alpha(\rho_{Y,t})-\OX_t}^2] \\
		&\leq -\left(\frac{2\gamma}{m}-\frac{\lambda_2}{\varepsilon m}\right)\EE[\absnormal{\OV_t}^2]
			+\frac{\sigma_1^2}{m^2}\EE[\absnormal{\OY_t-\OX_t}^2]
			+\left(\frac{\varepsilon\lambda_2}{m}+\frac{\sigma_2^2}{m^2}\right)\EE[\absnormal{y_\alpha(\rho_{Y,t})-\OX_t}^2]\\
			&\quad\,
			-\frac{2\lambda_1}{m}\EE[\left\langle\OV_t,\OX_t-\OY_t\right\rangle],\quad\forall\, \varepsilon>0.
	\end{split}
	\end{equation}
	Again by employing the It\^{o}-Doeblin formula we obtain
	\begin{equation*} 
	\begin{split}
		\frac{d}{dt}\EE[\left\langle\delta\OX_t, \OV_t\right\rangle]
		&=\EE[\absnormal{\OV_t}^2]-\left(\EE[\OV_t]\right)^2-\frac{\gamma}{m}\EE[\left\langle\delta\OX_t,\OV_t\right\rangle]+\frac{\lambda_1}{m}\EE[\left\langle\delta\OX_t,\OY_t-\OX_t\right\rangle]\\
		&\qquad\qquad\;\;\,+\frac{\lambda_2}{m}\EE[\left\langle\delta\OX_t,y_\alpha(\rho_{Y,t})-\OX_t\right\rangle]\\
		&\leq\EE[\absnormal{\OV_t}^2]-\frac{\gamma}{2m}\frac{d}{dt} \EE[\absnormal{\delta\OX_t}^2]+\frac{\lambda_1}{m}\EE[\left\langle\delta\OX_t,\left(\OY_t-y_\alpha(\rho_{Y,t})\right)-\big(\OX_t-\EE[\OX_t]\big)\right\rangle]\\
		&\qquad\qquad\;\;\,+\frac{\lambda_2}{m}\EE[\left\langle\delta\OX_t,\EE[\OX_t]-\OX_t\right\rangle]\\
		&=\EE[\absnormal{\OV_t}^2]-\frac{\gamma}{2m}\frac{d}{dt} \EE[\absnormal{\delta\OX_t}^2]-\frac{\lambda_1+\lambda_2}{m}\EE[\absnormal{\delta\OX_t}^2]+\frac{\lambda_1}{m}\EE[\left\langle\delta\OX_t,\OY_t-y_\alpha(\rho_{Y,t})\right\rangle]\\
		&\leq\EE[\absnormal{\OV_t}^2]-\frac{\gamma}{2m}\frac{d}{dt} \EE[\absnormal{\delta\OX_t}^2]-\frac{\lambda_1+2\lambda_2}{2m}\EE[\absnormal{\delta\OX_t}^2]+\frac{\lambda_1}{2m}\EE[\absnormal{\OY_t-y_\alpha(\rho_{Y,t})}^2],
	\end{split}
	\end{equation*}
	where, for the second line, we used the identity~\eqref{eq:time_evolution_variance_memory} and that $\EE[\left\langle\delta\OX_t,\textbf{C}\right\rangle]=0$, whenever $\textbf{C}\in\RR^d$ is constant, allowing to expand the expression in the way done.	
	We now rearrange the previous inequality to get
	\begin{equation} \label{eq:time_evolution_X2_memory}
		\frac{\gamma}{2m}\frac{d}{dt} \EE[\absnormal{\delta\OX_t}^2]+\frac{d}{dt}\EE[\left\langle\delta\OX_t, \OV_t\right\rangle]
		\leq \EE[\absnormal{\OV_t}^2]-\frac{\lambda_1+2\lambda_2}{2m}\EE[\absnormal{\delta\OX_t}^2]+\frac{\lambda_1}{2m}\EE[\absnormal{\OY_t-y_\alpha(\rho_{Y,t})}^2].
	\end{equation}
	Next, using the It\^{o}-Doeblin formula, we compute
	\begin{equation} \label{eq:time_evolution_XY_memory}
	\begin{split}
		\frac{d}{dt} \EE[\absnormal{\OX_t-\OY_t}^2]
		&= 2\EE[\left\langle\OX_t-\OY_t, \OV_t-\kappa\left(\OX_{t}-\OY_{t}\right) S^{\beta,\theta}(\OX_{t}, \OY_{t})\right\rangle]
		\\
		&\leq 2\EE[\left\langle\OX_t-\OY_t, \OV_t\right\rangle]-2\kappa \theta\EE[\absnormal{\OX_t-\OY_t}^2],
	\end{split}
	\end{equation}
	where the last step follows from the fact that $\theta < S^{\beta,\theta}(\OX_{t}, \OY_{t})<2+\theta<4$.
	And lastly, the It\^{o}-Doeblin formula and Young's inequality allow to bound
	\begin{equation} \label{eq:time_evolution_scalarproductXYV_memory}
	\begin{split}
		&\frac{d}{dt}\EE[\left\langle\OX_t-\OY_t,\OV_t\right\rangle]
		=-\frac{\gamma}{m}\EE[\left\langle\OX_t-\OY_t,\OV_t\right\rangle]
			-\frac{\lambda_1+\lambda_2}{m}\EE[\absnormal{\OX_t-\OY_t}^2]
			+\frac{\lambda_2}{m}\EE[\left\langle\OX_t-\OY_t,y_\alpha(\rho_{Y,t})-\OY_t\right\rangle]\\
		&\qquad\qquad\qquad\qquad\quad\;\;\, +\EE[\left\langle\OV_t-\kappa\left(\OX_{t}-\OY_{t}\right) S^{\beta,\theta}(\OX_{t},\OY_{t}), \OV_t\right\rangle]\\
		&\qquad\leq -\frac{\gamma}{m}\EE[\left\langle\OX_t-\OY_t,\OV_t\right\rangle]
			-\frac{\lambda_1+\lambda_2}{m}\EE[\absnormal{\OX_t-\OY_t}^2]
			+\frac{\lambda_2^2}{2m\lambda_1}\EE[\absnormal{\OX_t-\OY_t}^2]
			+\frac{\lambda_1}{2m}\EE[\absnormal{y_\alpha(\rho_{Y,t})-\OY_t}^2] \\
		&\qquad\qquad\qquad\qquad\quad\;\;\, +\EE[\absnormal{\OV_t}^2]
			+\frac{1}{2}\EE[\absnormal{\OV_t}^2]
			+8\kappa^2 \EE[\absnormal{\OX_t-\OY_t}^2]\\
		&\qquad= -\left(\frac{\lambda_1+\lambda_2}{m}-8\kappa^2-\frac{\lambda_2^2}{2m\lambda_1}\right)\EE[\absnormal{\OX_t-\OY_t}^2]
			+\frac{3}{2}\EE[\absnormal{\OV_t}^2]
			+\frac{\lambda_1}{2m}\EE[\absnormal{y_\alpha(\rho_{Y,t})-\OY_t}^2]\\
			&\qquad\qquad\qquad\qquad\quad\;\;\, -\frac{\gamma}{m}\EE[\left\langle\OX_t-\OY_t,\OV_t\right\rangle].
	\end{split}
	\end{equation}
	We now collect the bounds~\eqref{eq:time_evolution_V2_memory}, \eqref{eq:time_evolution_X2_memory}, \eqref{eq:time_evolution_XY_memory}, and~\eqref{eq:time_evolution_scalarproductXYV_memory} to show
	\begin{equation*} 
	\begin{split}
		\frac{d}{dt}\EE[\CH(t)]
		&\leq -\left(\frac{3\gamma}{m}-\frac{3\lambda_2}{2\varepsilon m}-\frac{\gamma}{2m}-\frac{3\gamma}{2m}\right)\EE[\absnormal{\OV_t}^2]
			-\frac{(\lambda_1+2\lambda_2)\gamma}{(2m)^2}\EE[\absnormal{\delta\OX_t}^2] \\
		&\quad\, -\left(
			\frac{(\lambda_1+\lambda_2)\gamma}{m^2}
			-\frac{8\kappa^2\gamma}{m}-\frac{\lambda_2^2\gamma}{2m^2\lambda_1}
			+\kappa\theta\left(\frac{3\lambda_1}{m}+\frac{\gamma^2}{m^2}\right)
			-\frac{3\sigma_1^2}{2m^2}
			\right)\EE[\absnormal{\OX_t-\OY_t}^2] \\
		&\quad\, +\frac{3}{2}\left(\frac{\varepsilon\lambda_2}{m}+\frac{\sigma_2^2}{m^2}\right)\EE[\absnormal{y_\alpha(\rho_{Y,t})-\OX_t}^2]
			+\frac{3\lambda_1\gamma}{(2m)^2}\EE[\absnormal{y_\alpha(\rho_{Y,t})-\OY_t}^2]\\
	&\leq -\left(\frac{\gamma}{m}-\frac{3\lambda_2}{2\varepsilon m}\right)\EE[\absnormal{\OV_t}^2]
			-\frac{(\lambda_1+2\lambda_2)\gamma}{(2m)^2}\EE[\absnormal{\delta\OX_t}^2] \\
		&\quad\, -\left(\frac{(\lambda_1+\lambda_2)\gamma}{m^2}
			-\frac{8\kappa^2\gamma}{m}-\frac{\lambda_2^2\gamma}{2m^2\lambda_1}
			+\kappa\theta\left(\frac{3\lambda_1}{m}+\frac{\gamma^2}{m^2}\right)
			-\frac{3\sigma_1^2}{2m^2}
			-3\left(\frac{\varepsilon\lambda_2}{m}+\frac{\sigma_2^2}{m^2}\right)
			\right)\EE[\absnormal{\OX_t-\OY_t}^2] \\
		&\quad\, +\left(3\left(\frac{\varepsilon\lambda_2}{ m}+\frac{\sigma_2^2}{m^2}\right)+\frac{3\lambda_1\gamma}{(2m)^2}\right)\EE[\absnormal{y_\alpha(\rho_{Y,t})-\OY_t}^2].
	\end{split}
	\end{equation*}
	Recalling the computation~\eqref{eq:boundX-consensuspoint} yields the bound
	\begin{equation} \label{eq:boundY-consensuspoint}
		\EE[\absnormal{\OY_t-y_\alpha(\rho_{Y,t})}^2]
		\leq 2e^{-\alpha\underline \TE}\frac{\EE[\absnormal{\delta\OY_t}^2]}{\EE[\exp(-\alpha\TE(\OY_t))]}
		\leq 2e^{-\alpha\underline \TE}\frac{6\EE[\absnormal{\OY_t-\OX_t}^2]+3\EE[\absnormal{\delta\OX_t}^2]}{\EE[\exp(-\alpha\TE(\OY_t))]},
	\end{equation}
	where we inserted $\pm \OX_t$ and $\pm \EE[\OX_t]$ in the second step and used that $(a+b+c)^2\leq3(a^2+b^2+c^2)$ as well as Jensen's inequality.
	Combining the last two bounds and choosing $\varepsilon=(3\lambda_2)/\gamma$ we obtain~\eqref{eq:evolution_H_memory} as desired.
\end{proof}

\begin{remark} \label{remark:parameter_choice_memory}
	The exponential decay of $\EE[\CH(t)]$ it obtained by choosing the parameters of PSO in a manner which ensures the negativity of the prefactors of~$\EE[\absnormal{\OX_t-\EE[\OX_t]}^2]$ and~$\EE[\absnormal{\OX_t-\OY_t}^2]$ in Inequality~\eqref{eq:evolution_H_memory}.
	This may be achieved by choosing for any fixed time~$t$, given~$\alpha$ and arbitrary $\theta,\sigma_1,\sigma_2,\gamma>0$,
	\begin{equation*} 
		\lambda_1 > \frac{3\sigma_1^2}{2\gamma}, \
		\lambda_2 > 6\max\left\{\frac{D_t^Y\lambda_1}{4},\frac{(1+D_t^Y)\sigma_2^2}{\gamma}\right\}, \
		\kappa > \frac{3\lambda_2^2(1+D_t^Y)}{\gamma\theta\lambda_1}, \
		\text{and} \ \ 
		m<\min\left\{\frac{\gamma\theta}{16\kappa},\frac{\lambda_1\gamma^2}{18D_t^Y\lambda_2^2}\right\},
	\end{equation*}
	where we abbreviate $D_t^Y=12e^{-\alpha\underline\TE}/\EE[\exp(-\alpha\TE(\OY_t))]$.
\end{remark}

In our main theorem on convergence of the PSO dynamics with memory mechanisms to the global minimizer~$x^*$ we again ensure that the parameter can be chosen once at initialization time.

\begin{thm} \label{thm:convergence_memory}
	Let $\TE$ satisfy Assumptions~\ref{asm:zero_global}--\ref{asm:Hessian} and let $(\OX_t,\OV_t)_{t\geq 0}$ be a solution to the nonlinear SDE~\eqref{eq:PSO_with_memory_mf}.
	Moreover, let us assume the well-preparation of the initial datum~$\OX_0$ and~$\OV_0$ in the sense that
	\begin{enumerate}[label=P\arabic*,labelsep=10pt,leftmargin=35pt]
		\item\label{asm:well_preparedness_parameters1_memory}
			$\mu_1>0$ with
			\begin{equation*}
				\mu_1:=\frac{(\lambda_1+2\lambda_2)\gamma}{(2m)^2}-\left(\frac{9\lambda_2^2}{\gamma m}+\frac{3\sigma_2^2}{m^2}+\frac{3\lambda_1\gamma}{4m^2}\right)\frac{12e^{-\alpha\underline\TE}}{\EE[\exp(-\alpha\TE(\OY_0))]},
			\end{equation*}
		\item\label{asm:well_preparedness_parameters2_memory}
			$\mu_2>0$ with
			\begin{equation*}
			\begin{split}
				\mu_2:=&\,\frac{(\lambda_1+\lambda_2)\gamma}{m^2}+\kappa\theta\left(\frac{3\lambda_1}{m}+\frac{\gamma^2}{m^2}\right)-\frac{8\kappa^2\gamma}{m}-\frac{\lambda_2^2\gamma}{2m^2\lambda_1}-\frac{3\sigma_1^2}{2m^2}\\
				&\qquad-\left(\frac{9\lambda_2^2}{\gamma m}+\frac{3\sigma_2^2}{m^2}\right)-\left(\frac{9\lambda_2^2}{\gamma m}+\frac{3\sigma_2^2}{m^2}+\frac{3\lambda_1\gamma}{(2m)^2}\right)\frac{24e^{-\alpha\underline\TE}}{\EE[\exp(-\alpha\TE(\OY_0))]},
			\end{split}
			\end{equation*}
		
		\item\label{asm:well_preparedness_initial_memory}
			it holds
			\begin{equation*}
				\left(\frac{\alpha\kappa m}{\lambda_1\chi}\left(C_\TE+2\alpha^2\right)+\frac{24C_{\TE}^2\kappa}{\alpha\chi^3}\right)\frac{\EE[\CH(0)]}{\EE[\exp(-\alpha(\TE(\OY_0)-\underline \TE))]}
				+\frac{6\kappa}{\alpha\chi} \frac{\EE[\absnormal{\nabla\TE(\OX_0)}^2]}{\EE[\exp(-\alpha(\TE(\OY_0)-\underline \TE))]}
				<\frac{3}{32}
			\end{equation*}
			where
			\begin{equation*}
				\chi := \frac{2}{5}\frac{\min\{\gamma/(2m),\mu_1,\mu_2\}}{\big(\!\left(\gamma/(2m)\right)^2+1+3\lambda_1/m+2(\gamma/m)^2\big)}.
			\end{equation*}
	\end{enumerate}
	Then $\EE[\CH(t)]$ with $\CH$ as defined in Equation~\eqref{eq:H_memory} converges exponentially fast with rate~$\chi$ to $0$ as $t\to \infty$.
	Moreover, there exists  some $\tilde x$, which may depend on $\alpha$ and $f_0$, such that $\EE[\OX_t]\to \tilde x$ and $y_\alpha(\rho_{Y,t})\to \tilde x$ exponentially fast with rate~$\chi/2$ as $t\to \infty$.
	Eventually, for any given accuracy~$\varepsilon>0$, there exists $\alpha_0>0$, such that for all $\alpha>\alpha_0$, $\tilde x$ satisfies
	\begin{equation*}
		\TE(\tilde x)-\underline{\TE} \leq \varepsilon.
	\end{equation*}
	If $\TE$ additionally satisfies Assumption~\ref{asm:icp}, we additionally have $\abs{\tilde x-x^*}\leq\varepsilon^\nu/\eta$.
\end{thm}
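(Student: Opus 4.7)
The plan is to follow the template established in the proof of Theorem~\ref{thm:convergence}, adapted to the enlarged state space and the memory dynamics. I would define the time horizon
\begin{equation*}
	T := \inf\left\{t\geq 0:\EE[\exp(-\alpha\TE(\OY_t))] < \tfrac{1}{2} \EE[\exp(-\alpha\TE(\OY_0))]\right\},
\end{equation*}
note $T>0$ by continuity, and argue by contradiction that $T=\infty$. For $t\in[0,T]$, the positivity assumptions~\ref{asm:well_preparedness_parameters1_memory} and~\ref{asm:well_preparedness_parameters2_memory} propagate to time~$t$ in the form that the two bracketed prefactors in~\eqref{eq:evolution_H_memory} are bounded below by $\mu_1$ and $\mu_2$ respectively (with the $12e^{-\alpha\underline\TE}/\EE[\exp(-\alpha\TE(\OY_0))]$ appearing in P1--P2 absorbing the factor $2$ loss coming from the lower bound on $\EE[\exp(-\alpha\TE(\OY_t))]$ on $[0,T]$). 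Combining with the velocity contribution and applying the upper bound of Lemma~\ref{lem:equivalence_H_memory}, Lemma~\ref{lem:evolution_H_memory} yields
\begin{equation*}
	\frac{d}{dt}\EE[\CH(t)] \leq -\min\!\left\{\frac{\gamma}{2m},\mu_1,\mu_2\right\}\!\left(\EE[\absnormal{\delta\OX_t}^2]+\EE[\absnormal{\OV_t}^2]+\EE[\absnormal{\OX_t-\OY_t}^2]\right)\leq -\chi\EE[\CH(t)],
\end{equation*}
and Grönwall's inequality gives $\EE[\CH(t)]\leq\EE[\CH(0)]e^{-\chi t}$ on $[0,T]$.

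The central new ingredient, replacing Lemma~\ref{lem:evolution_expE2}, is a lower bound on $\EE[\exp(-\alpha\TE(\OY_t))]$ that exploits the crucial fact that $\OY$ evolves by a drift only (no velocity, no Brownian motion). The It\^o-Doeblin formula gives
\begin{equation*}
	\frac{d}{dt}\EE[\exp(-\alpha\TE(\OY_t))]=-\alpha\kappa\,\EE\!\left[\exp(-\alpha\TE(\OY_t))\,S^{\beta,\theta}(\OX_t,\OY_t)\,\langle\nabla\TE(\OY_t),\OX_t-\OY_t\rangle\right].
\end{equation*}
Using $S^{\beta,\theta}\leq 2+\theta$, Assumption~\ref{asm:Hessian} (which yields a Lipschitz gradient and thus $\absnormal{\nabla\TE(\OY_t)}^2\lesssim\absnormal{\nabla\TE(\OX_0)}^2+C_\TE^2(\absnormal{\OX_t-\OY_t}^2+\absnormal{\OX_t-\EE[\OX_t]}^2+\absnormal{\EE[\OX_t]-\OX_0}^2)$), and Cauchy--Schwarz, I would estimate the right-hand side in terms of $\EE[\absnormal{\nabla\TE(\OX_0)}^2]$, $\EE[\CH(t)]$, and $\absnormal{\EE[\OX_t]-\OX_0}^2$. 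The last quantity is controlled by $\int_0^t\EE[\absnormal{\OV_s}]\,ds$, which by Jensen and the already established exponential decay of $\EE[\CH(s)]$ is bounded by $C\EE[\CH(0)]^{1/2}/\chi$, giving additional factors of $\chi^{-1}$ and the $C_\TE^2\kappa/(\alpha\chi^3)$-type terms visible in the condition~\ref{asm:well_preparedness_initial_memory}. Integrating on $[0,t]$ and using $\EE[\CH(s)]\leq\EE[\CH(0)]e^{-\chi s}$ yields a bound of the form
\begin{equation*}
	\EE[\exp(-\alpha\TE(\OY_t))]\geq\EE[\exp(-\alpha\TE(\OY_0))]-C_1\,e^{-\alpha\underline\TE}\,\frac{\EE[\CH(0)]}{\EE[\exp(-\alpha(\TE(\OY_0)-\underline\TE))]}-C_2\,e^{-\alpha\underline\TE}\,\frac{\EE[\absnormal{\nabla\TE(\OX_0)}^2]}{\EE[\exp(-\alpha(\TE(\OY_0)-\underline\TE))]},
\end{equation*}
which, by~\ref{asm:well_preparedness_initial_memory}, stays strictly above $\tfrac{1}{2}\EE[\exp(-\alpha\TE(\OY_0))]$ on a neighborhood of $T$, contradicting the definition of~$T$. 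Hence $T=\infty$ and both estimates persist globally. I expect this step---carefully organizing the gradient-growth estimate so that the constants match those in~\ref{asm:well_preparedness_initial_memory}---to be the main technical obstacle, since the $Y$-dynamics mixes the particle with the stochastic $X$-process only through the coupling term.

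Once global exponential decay of $\EE[\CH(t)]$ is established, the argument closes exactly as in Theorem~\ref{thm:convergence}. By the equivalence of Lemma~\ref{lem:equivalence_H_memory}, $\EE[\absnormal{\delta\OX_t}^2]$, $\EE[\absnormal{\OV_t}^2]$, and $\EE[\absnormal{\OX_t-\OY_t}^2]$ all decay at rate $\chi$; Jensen gives $\absnormal{\tfrac{d}{dt}\EE[\OX_t]}\leq\EE[\absnormal{\OV_t}]\leq Ce^{-\chi t/2}$, so $\EE[\OX_t]\to\tilde x$ for some $\tilde x\in\RR^d$ at rate $\chi/2$. Inserting $\pm\OX_t$ and $\pm\EE[\OX_t]$ into $\absnormal{y_\alpha(\rho_{Y,t})-\tilde x}^2$ and using the consensus-point estimate as in~\eqref{eq:boundY-consensuspoint} together with the $\OX_t$-$\OY_t$ alignment yields $y_\alpha(\rho_{Y,t})\to\tilde x$ at the same rate. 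Finally, taking $t\to\infty$ in the lower bound $\EE[\exp(-\alpha\TE(\OY_t))]\geq\tfrac12\EE[\exp(-\alpha\TE(\OY_0))]$ after passing through $-\tfrac{1}{\alpha}\log$ (using dominated convergence and $\OY_t\to\tilde x$ in mean-square, which follows from $\OY_t=\OX_t+(\OY_t-\OX_t)$) yields $\TE(\tilde x)\leq-\tfrac{1}{\alpha}\log\EE[\exp(-\alpha\TE(\OY_0))]+\tfrac{\log 2}{\alpha}$, and the Laplace principle~\eqref{eq:laplace_principle} applied to the initial $\OY_0$-law allows to pick $\alpha$ large enough so that $\TE(\tilde x)-\underline\TE\leq\varepsilon$; the inverse continuity property~\ref{asm:icp} then delivers the quantitative bound $\absnormal{\tilde x-x^*}\leq\varepsilon^\nu/\eta$.
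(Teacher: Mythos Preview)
Your proposal follows essentially the same architecture as the paper's proof: define the exit time~$T$, obtain exponential decay of $\EE[\CH(t)]$ on $[0,T]$ via Lemma~\ref{lem:evolution_H_memory} combined with~\ref{asm:well_preparedness_parameters1_memory}--\ref{asm:well_preparedness_parameters2_memory} and Lemma~\ref{lem:equivalence_H_memory}, derive a lower bound on $\EE[\exp(-\alpha\TE(\OY_t))]$ that contradicts $T<\infty$ via~\ref{asm:well_preparedness_initial_memory}, and then conclude exactly as in Theorem~\ref{thm:convergence} with $\OY$ in place of $\OX$.

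The only point where your route differs from the paper's is the control of the gradient in $\frac{d}{dt}\EE[\exp(-\alpha\TE(\OY_t))]$. You propose a Lipschitz telescope for $\nabla\TE(\OY_t)$ through $\OX_t$, $\EE[\OX_t]$, and $\OX_0$. The paper instead splits $\nabla\TE(\OY_t)=(\nabla\TE(\OY_t)-\nabla\TE(\OX_t))+\nabla\TE(\OX_t)$, bounds the first piece by $C_\TE\absnormal{\OX_t-\OY_t}$, and controls $\EE[\absnormal{\nabla\TE(\OX_t)}^2]$ by the path identity $\nabla\TE(\OX_t)=\nabla\TE(\OX_0)+\int_0^t\nabla^2\TE(\OX_s)\OV_s\,ds$, yielding $\EE[\absnormal{\nabla\TE(\OX_t)}^2]\leq 2\EE[\absnormal{\nabla\TE(\OX_0)}^2]+2C_\TE^2 t\int_0^t\EE[\absnormal{\OV_s}^2]\,ds$. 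Together with a time-weighted Young inequality (weights $e^{\chi t/2}\alpha^2$ and $e^{-\chi t/2}\alpha^{-2}$ on $\absnormal{\nabla\TE(\OX_t)}\absnormal{\OX_t-\OY_t}$), this is precisely what produces the constants in~\ref{asm:well_preparedness_initial_memory}. Your telescope can be made to work, but note that the term $\absnormal{\EE[\OX_t]-\OX_0}^2$ is still random; you would need an extra step through $\EE[\OX_0]$ (absorbing $\absnormal{\OX_0-\EE[\OX_0]}^2$ into $\CH(0)$) before $\absnormal{\EE[\OX_t]-\EE[\OX_0]}\leq\int_0^t\EE[\absnormal{\OV_s}]\,ds$ applies. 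The paper's path-integral route is cleaner and lands directly on the stated constants.
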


\begin{remark} \label{remark:parameter_choice_memory2}
	By replacing $D_t^Y$ with $2D_0^Y$ in the parameter choices of Remark~\ref{remark:parameter_choice_memory}, the well-preparation of the parameters as in Conditions~\ref{asm:well_preparedness_parameters1_memory} and~\ref{asm:well_preparedness_parameters2_memory} can be ensured.
	
	In analogy to Remark~\ref{remark:parameter_choice2}, Condition~\ref{asm:well_preparedness_initial_memory} guarantees the well-preparation of the initialization.
\end{remark}

\begin{proof}[Proof of Theorem~\ref{thm:convergence_memory}]
	Let us define the time horizon 
	\begin{equation*}
		T := \inf\left\{t\geq 0:\EE[\exp(-\alpha\TE(\OY_t))] < \frac{1}{2} \EE[\exp(-\alpha\TE(\OY_0))]  \right\} \quad\text{with }\inf \emptyset=\infty.
	\end{equation*}
	Obviously, by continuity, $T>0$.
	We claim that $T=\infty$, which we prove by contradiction in the following.
	Therefore, assume $T<\infty$.
	Then, for $t\in[0,T]$, noting that $\EE[\exp(-\alpha\TE(\OY_t))] \geq \EE[\exp(-\alpha\TE(\OY_0))]/2$, we observe that the prefactors of $\EE[\absnormal{\OX_t-\EE[\OX_t]}^2]$ and $\EE[\absnormal{\OX_t-\OY_t}^2]$ in Lemma~\ref{lem:evolution_H_memory} are upper bounded by $-\mu_1$ and $-\mu_2$, respectively.
	Lemma~\ref{lem:evolution_H_memory} then provides an upper bound for the time derivative of the functional~$\EE[\CH(t)]$, 
	\begin{equation} \label{eq:evolution_H_2}
	\begin{split}
		\frac{d}{dt}\EE[\CH(t)]
		&\leq -\frac{\gamma}{2m}\EE[\absnormal{\OV_t}^2]-\mu_1\EE[\absnormal{\OX_t-\EE[\OX_t]}^2]-\mu_2\EE[\absnormal{\OX_t-\OY_t}^2]\\
		&\leq -\min\left\{\frac{\gamma}{2m},\mu_1,\mu_2\right\} \left(\EE[\absnormal{\OX_t-\EE[\OX_t]}^2] + \EE[\absnormal{\OV_t}^2] + \EE[\absnormal{\OX_t-\OY_t}^2]\right)\\
		&\leq -\frac{2}{5}\frac{\min\{\gamma/(2m),\mu_1,\mu_2\}}{\big(\!\left(\gamma/(2m)\right)^2+1+3\lambda_1/m+2\gamma^2/m^2\big)}\EE[\CH(t)]
		=: -\chi\EE[\CH(t)],
	\end{split}
	\end{equation}
	where we made use of the upper bound of~\eqref{eq:equivalence_H_memory} as in Lemma~\ref{lem:equivalence_H_memory} in the last inequality.
	The rate~$\chi$ is defined implicitly and it is straightforward to check that $0<\chi<\gamma/m$, where the positivity of $\chi$ follows from the well-preparation conditions~\ref{asm:well_preparedness_parameters1_memory} and~\ref{asm:well_preparedness_parameters2_memory} of the initialization.
	Gr\"onwall's inequality implies
	\begin{equation} \label{eq:decay_H_memory}
		\EE[\CH(t)] \leq
		\EE[\CH(0)]\exp(-\chi t).
	\end{equation}
	We now investigate the evolution of the functional~$\mc{Y}(t) := \EE[\exp(-\alpha\TE(\OY_t))]$.
	The It\^o-Doeblin formula yields
	\begin{equation} \label{eq:evolution_Y_memory}
	\begin{split}
		\frac{d}{dt}\mc{Y}(t)
		&= -\alpha\kappa\EE[\exp(-\alpha\TE(\OY_t))\left\langle\nabla\TE(\OY_t),(\OX_t-\OY_t)S^{\beta,\theta}(\OX_t,\OY_t)\right\rangle]\\
		&= -\alpha\kappa\EE[\exp(-\alpha\TE(\OY_t))\left\langle\nabla\TE(\OY_t)-\nabla\TE(\OX_t),(\OX_t-\OY_t)S^{\beta,\theta}(\OX_t,\OY_t)\right\rangle]\\
		&\quad\,-\alpha\kappa\EE[\exp(-\alpha\TE(\OY_t))\left\langle\nabla\TE(\OX_t),(\OX_t-\OY_t)S^{\beta,\theta}(\OX_t,\OY_t)\right\rangle]\\
		&\geq -4\alpha\kappa e^{-\alpha\underline \TE}C_\TE\EE[|\OX_t-\OY_t|^2]
		-4\alpha\kappa  e^{-\alpha\underline \TE}\EE[\absnormal{\nabla\TE(\OX_t)}\absnormal{\OX_t-\OY_t}],
	\end{split}
	\end{equation}
	where the last step follows from Cauchy-Schwarz inequality and uses Assumption~\ref{asm:Hessian} and $S^{\beta,\theta}(\OX_{t}, \OY_{t})<4$.
	Now firstly notice that $\EE[\absnormal{\nabla\TE(\OX_t)}\absnormal{\OX_t-\OY_t}] \leq  e^{(\chi/2)t}\alpha^2\EE[\absnormal{\OX_t-\OY_t}^2]+e^{-(\chi/2)t}/\alpha^2\EE[\absnormal{\nabla\TE(\OX_t)}^2 ]$ by Young's inequality.
	Secondly, using again Assumption~\ref{asm:Hessian} in the first inequality, we have
	\begin{equation*}
	\begin{split}
		\EE[\absnormal{\nabla\TE(\OX_t)}^2]
		&=\EE\left[\abs{\nabla\TE(\OX_0) + \int_0^t \nabla^2\TE(\OX_s) \OV_s\,ds}^2\right]
		\leq 2\EE[\absnormal{\nabla\TE(\OX_0)}^2] + 2C_{\TE}^2t \int_0^t \EE[\absnormal{\OV_s}^2]\,ds\\
		&\leq 2\EE[\absnormal{\nabla\TE(\OX_0)}^2] + 4C_{\TE}^2 t \int_0^t \EE[\CH(s)]\,ds
		\leq 2\EE[\absnormal{\nabla\TE(\OX_0)}^2] + 4C_{\TE}^2 t \EE[\CH(0)] \int_0^t \exp(-\chi s)\,ds\\
		&= 2\EE[\absnormal{\nabla\TE(\OX_0)}^2] + 4C_{\TE}^2 t \EE[\CH(0)] \frac{1}{\chi}\left(1-\exp(-\chi t)\right),
	\end{split}
	\end{equation*}
	where the next-to-last step uses the explicit bound in~\eqref{eq:decay_H_memory}.
	Using the two latter observations together with the fact that $\EE[\absnormal{\OX_t-\OY_t}^2]\leq2m/(3\lambda_1)\EE[\CH(t)]$ we can continue~\eqref{eq:evolution_Y_memory} as follows
	\begin{equation} \label{eq:evolution_Y_memory2}
	\begin{split}
		\frac{d}{dt}\mc{Y}(t)
		&\geq -4\alpha\kappa e^{-\alpha\underline \TE}\left(C_\TE+\exp\left(\frac{\chi}{2}t\right)\alpha^2\right)\frac{2m}{3\lambda_1}\EE[\CH(t)]
		-\frac{4}{\alpha}\kappa e^{-\alpha\underline \TE}\exp\left(-\frac{\chi}{2}t\right)\EE[\absnormal{\nabla\TE(\OX_t)}^2]\\
		&\geq -4\alpha\kappa e^{-\alpha\underline \TE}\left(C_\TE+\exp\left(\frac{\chi}{2}t\right)\alpha^2\right)\frac{2m}{3\lambda_1}\EE[\CH(0)]\exp(-\chi t)\\
		&\quad\, -\frac{4}{\alpha}\kappa e^{-\alpha\underline \TE}\exp\left(-\frac{\chi}{2}t\right)\left(2\EE[\absnormal{\nabla\TE(\OX_0)}^2] + 4C_{\TE}^2 t \EE[\CH(0)] \frac{1}{\chi}\left(1-\exp(-\chi t)\right)\right)\\
		&\geq -4\alpha\kappa e^{-\alpha\underline \TE}\left(C_\TE\exp\left(-\chi t\right)+\exp\left(-\frac{\chi}{2}t\right)\alpha^2\right)\frac{2m}{3\lambda_1}\EE[\CH(0)]\\
		&\quad\, -\frac{4}{\alpha}\kappa e^{-\alpha\underline \TE}\exp\left(-\frac{\chi}{2}t\right)\left(2\EE[\absnormal{\nabla\TE(\OX_0)}^2] + \frac{4C_{\TE}^2 t}{\chi} \EE[\CH(0)]\right).
	\end{split}
	\end{equation}
	By integrating~\eqref{eq:evolution_Y_memory2} we obtain for all $t\in[0,T]$
	\begin{equation*}
	\begin{split}
		\mc{Y}(t) \geq \mc{Y}(0)
		&-4\alpha\kappa e^{-\alpha\underline\TE}\left(\frac{C_\TE}{\chi}+\frac{2\alpha^2}{\chi}\right)\frac{2m}{3\lambda_1}\EE[\CH(0)]
		-\frac{4}{\alpha}\kappa e^{-\alpha\underline \TE}\left(
		2\EE[\absnormal{\nabla\TE(\OX_0)}^2]\frac{2}{\chi}+\frac{16C_{\TE}^2}{\chi^3}\EE[\CH(0)]\right).
	\end{split}
	\end{equation*}
	Recalling the definition of~$\mc{Y}$ and employing Condition~\ref{asm:well_preparedness_initial_memory}, we can deduce that for all $t\in[0,T]$ it holds
	\begin{equation*}
	\begin{split}
		\EE[\exp(-\alpha\TE(\OY_t))]
		&\geq \EE[\exp(-\alpha\TE(\OY_0))]
		-4\alpha\kappa e^{-\alpha\underline\TE}\left(\frac{C_\TE}{\chi}+\frac{2\alpha^2}{\chi}\right)\frac{2m}{3\lambda_1}\EE[\CH(0)]\\
		&\quad\,-\frac{4}{\alpha}\kappa e^{-\alpha\underline \TE}\left(
		2\EE[\absnormal{\nabla\TE(\OX_0)}^2]\frac{2}{\chi}+\frac{16C_{\TE}^2}{\chi^3}\EE[\CH(0)]\right)\\
		&> \frac{3}{4} \EE[\exp(-\alpha\TE(\OY_0))],
	\end{split}
	\end{equation*}
	which entails that there exists $\delta>0$ such that $\EE[\exp(-\alpha\TE(\OY_t))]\geq \EE[\exp(-\alpha\TE(\OY_0))]/2$ in $[T,T+\delta]$ as well, contradicting the definition of $T$ and therefore showing the claim $T=\infty$.

	\noindent
	As a consequence of~\eqref{eq:decay_H_memory} we have
	\begin{equation} \label{eq:decay_H_memory2}
		\EE[\CH(t)] \leq \EE[\CH(0)]\exp(-\chi t)
		\quad\text{and}\quad
		\EE[\exp(-\alpha\TE(\OY_t))]\geq\frac{1}{2}\EE[\exp(-\alpha\TE(\OY_0))]
	\end{equation}
	for all $t\geq0$.
	In particular, by means of Lemma~\ref{lem:equivalence_H_memory}, for a suitable generic constant $C>0$, we infer
	\begin{equation} \label{eq:decay_rest_memory}
		\EE[\absnormal{\OX_t-\EE[\OX_t]}^2] \leq C\exp(-\chi t),
		\quad
		\EE[\absnormal{\OV_t}^2] \leq C\exp(-\chi t),
		\quad\text{and}\quad
		\EE[\absnormal{\OX_t-\OY_t}^2] \leq C\exp(-\chi t).
	\end{equation}
	Moreover, with Jensen's inequality,
	\begin{equation*}
		\abs{\frac{d}{dt}\EE[\OX_t]}
		\leq \EE[\absnormal{\OV_t}]
		\leq C\exp\left(-\chi t/2\right) \to 0 \quad\text{as } t\to\infty,
	\end{equation*}
	showing that $\EE[\OX_t]\to \tilde x$ for some $\tilde x\in\RR^d$, which may depend on $\alpha$ and $f_0$.
	According to \eqref{eq:decay_rest_memory}, $\OX_t\to \tilde x$ as well as $\OY_t\to \tilde x$ in mean-square.
	Moreover, by reusing the inequality~\eqref{eq:boundY-consensuspoint} we get
	\begin{equation*}
	\begin{split}
		\EE[\absnormal{\OY_t-y_\alpha(\rho_{Y,t})}^2]
		&
		\leq 4e^{-\alpha\underline \TE}\frac{6\EE[\absnormal{\OY_t-\OX_t}^2]+3\EE[\absnormal{\OX_t-\EE\OX_t}^2]}{\EE[\exp(-\alpha\TE(\OY_0))]}
		\leq C\exp(-\chi t)
	\end{split}
	\end{equation*}
	showing $y_\alpha(\rho_{Y,t})\to \tilde x$, since
	\begin{equation*}
		\absnormal{y_\alpha(\rho_{Y,t})-\tilde x}^2
		\leq 4\EE[\absnormal{y_\alpha(\rho_{Y,t})-\OY_t}^2]+4\EE[\absnormal{\OY_t-\OX_t}^2]+4\EE[\absnormal{\OX_t-\EE\OX_t}^2]+4\absnormal{\EE\OX_t-\tilde x}^2 \to 0 \quad\text{as } t\to\infty.
	\end{equation*}
	The remainder of the proof follows the lines of the proof of Theorem~\ref{thm:convergence}, replacing merely $\OX_t$ with $\OY_t$.
\end{proof}

\section{A Wholistic Convergence Statement of PSO without Memory Effects} \label{sec:wholisticconvergence}

\noindent
In Sections~\ref{sec:PSO_without_memory} and~\ref{sec:PSO_with_memory} we analyzed the macroscopic behavior of PSO without and with memory effects in the mean-field regime.
For this purpose we introduced the with~\eqref{eq:PSO_with_memory} and~\eqref{eq:PSO_without_memory} associated self-consistent mono-particle processes~\eqref{eq:PSO_with_memory_mf} and~\eqref{eq:PSO_without_memory_mf}, for which we then established convergence guarantees under the in Theorems~\ref{thm:convergence} and~\ref{thm:convergence_memory} specified assumptions.
However, in order to be able to infer therefrom the optimization capabilities of the numerically implemented PSO method, a quantitative estimate on the approximation quality of the interacting particle system by the corresponding mean-field dynamics is necessary.

\subsection{On the Mean-Field Approximation of PSO without Memory Effects}
The following theorem provides a probabilistic quantitative estimate on the mean-field approximation for PSO without memory effects.
Notably, the result does not suffer from the curse of dimensionality.

\begin{thm} \label{thm:MFA}
	Let $T>0$, $f_0\in\CP_4(\RR^{2d})$ and let $N\in\NN$ be fixed.	
	Moreover, let $\TE$ obey Assumptions~\ref{asm:zero_global}--\ref{asm:Hessian}.
	We denote by $\big((X_{t}^i,V_{t}^i)_{t\geq0}\big)_{i=1,\dots,N}$ the solution to system~\eqref{eq:PSO_without_memory} and let $\big((\OX_{t}^i,\OV_{t}^i)_{t\geq0}\big)_{i=1,\dots,N}$ be $N$ independent copies of the solution to the mean-field dynamics~\eqref{eq:PSO_without_memory_mf}.
	Then it holds
	\begin{equation} \label{eq:thm:MFA:OmegaM}
		\PP\left(\Omega_M\right) 
		= \PP\left(\sup_{t\in[0,T]}\left[\frac{1}{N}\sum_{i=1}^N \,\max\Big\{\absnormal{X_{t}^i}^4 + \absnormal{V_{t}^i}^4, \absnormal{\OX_{t}^i}^4 + \absnormal{\OV_{t}^i}^4\Big\}\right]\leq M\right)
		\geq 1-\frac{2K}{M},
	\end{equation}
	where $K=K(\gamma/m, \lambda/m, \sigma/m, T, \TE)$ is a constant, which is in particular independent of $N$ and $d$.
	
	Furthermore, if the processes share the initial data as well as the Brownian motion paths~$(B^{i}_t)_{t\geq0}$ for all~$i=1,\dots,N$,
	then we have a probabilistic mean-field approximation of the form
	\begin{equation}
		\max_{i=1,\dots,N}\sup_{t\in[0,T]} \,\EE \left[\absnormal{X_{t}^i-\OX_{t}^i}^2 + \absnormal{V_{t}^i-\OV_{t}^i}^2 \,\Big|\, \Omega_M\right]
		\leq C_{\mathrm{MFA}}N^{-1}
	\end{equation}
	with a constant~$C_{\mathrm{MFA}}=C_{\mathrm{MFA}}(\alpha, \gamma/m, \lambda/m, \sigma/m, T, \TE,K,M)$, which is in particular independent of $N$ and $d$.
\end{thm}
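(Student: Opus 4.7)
The plan is to prove the two statements in sequence. For the probability bound~\eqref{eq:thm:MFA:OmegaM}, I would first establish uniform-in-$N$ and uniform-in-$d$ fourth-moment estimates for both systems. For the $N$ mean-field copies this is immediate from Theorem~\ref{thm:wellposedness} combined with exchangeability. For the interacting particle system I would apply It\^o's formula to $|X_t^i|^4+|V_t^i|^4$, average over~$i$, and exploit the Jensen-type bound $|x_\alpha(\widehat\rho_{X,t}^N)|^p \leq \max_j |X_t^j|^p$ together with the coercivity/boundedness provided by Assumption~\ref{asm:growth} to control the consensus-drift contribution by an average of the $|X_t^j|^4$. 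A standard Gronwall estimate then bounds $\EE[\tfrac{1}{N}\sum_i(|X_t^i|^4+|V_t^i|^4)]$ uniformly on $[0,T]$, and a Burkholder--Davis--Gundy estimate on the martingale parts upgrades this to a bound on the supremum in time. Markov's inequality then yields~\eqref{eq:thm:MFA:OmegaM} with $K$ independent of $N$ and~$d$.

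For the mean-field approximation under synchronous coupling, I would study the error processes $e_t^i := (X_t^i-\OX_t^i,\,V_t^i-\OV_t^i)$. Applying It\^o's formula to $|e_t^i|^2$ and summing over~$i$, the only nontrivial contribution comes from the consensus-drift difference $x_\alpha(\widehat\rho_{X,t}^N) - x_\alpha(\rho_{X,t})$, which I would split as
\[
x_\alpha(\widehat\rho_{X,t}^N) - x_\alpha(\rho_{X,t}) = \bigl[x_\alpha(\widehat\rho_{X,t}^N) - x_\alpha(\widetilde\rho_{X,t}^N)\bigr] + \bigl[x_\alpha(\widetilde\rho_{X,t}^N) - x_\alpha(\rho_{X,t})\bigr],
\]
where $\widetilde\rho_{X,t}^N := \frac{1}{N}\sum_i \delta_{\OX_t^i}$ is the empirical measure associated with the $N$ independent mean-field copies. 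The first bracket is controlled by the local Lipschitz stability of the consensus map (cf.\@~\cite[Lemma~3.2]{carrillo2018analytical}) through $W_2(\widehat\rho_{X,t}^N,\widetilde\rho_{X,t}^N)^2 \leq \frac{1}{N}\sum_i |X_t^i-\OX_t^i|^2$, with a Lipschitz constant that is finite on $\Omega_M$ thanks to the moment bound and a pointwise lower bound on the denominator $\|\omega_\alpha^\TE\|_{L^1(\widehat\rho_{X,t}^N)}$. The second bracket is a pure i.i.d.\ fluctuation; expanding $x_\alpha$ as a ratio and bounding the variance of the empirical numerator $\frac{1}{N}\sum_i \OX_t^i\,\omega_\alpha^\TE(\OX_t^i)$ and denominator $\frac{1}{N}\sum_i \omega_\alpha^\TE(\OX_t^i)$ against their common expectation yields $\EE[|x_\alpha(\widetilde\rho_{X,t}^N) - x_\alpha(\rho_{X,t})|^2] \lesssim N^{-1}$ uniformly in $t\in[0,T]$, with a constant depending only on the uniform moment bounds and a propagated lower bound on $\EE[\omega_\alpha^\TE(\OX_t)]$.

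Inserting these two estimates into the It\^o expansion and using Assumption~\ref{asm:Lipschitz} together with Young's inequality produces a conditional Gronwall inequality of the form
\[
\frac{d}{dt}\EE\!\left[\tfrac{1}{N}\sum_{i=1}^N |e_t^i|^2 \,\Big|\, \Omega_M\right] \leq C_1\,\EE\!\left[\tfrac{1}{N}\sum_{i=1}^N |e_t^i|^2 \,\Big|\, \Omega_M\right] + C_2\, N^{-1},
\]
with $C_1,C_2$ depending on $\alpha$, $T$, $M$ and the PSO parameters but not on $N$ or $d$. Integration gives the $O(N^{-1})$ bound, and exchangeability of the particle labels upgrades the averaged bound to the stated uniform-in-$i$ estimate.

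The main obstacle is the $O(N^{-1})$ law-of-large-numbers estimate for $x_\alpha(\widetilde\rho_{X,t}^N) - x_\alpha(\rho_{X,t})$: since $x_\alpha$ is a nonlinear ratio of weighted integrals, one cannot directly invoke a standard LLN. Instead one must control numerator and denominator fluctuations separately and then stabilize the division using a uniform-in-$t$ lower bound $\EE[\omega_\alpha^\TE(\OX_t)]\geq c>0$, which can be propagated from the initial lower bound using the moment estimate~\eqref{eq:wellposedness_regularity} together with Assumption~\ref{asm:Lipschitz}. The event $\Omega_M$ enters precisely to guarantee an analogous pointwise denominator lower bound for $\widehat\rho_{X,t}^N$ on which the first bracket of the decomposition relies.
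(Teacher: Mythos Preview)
Your overall strategy---fourth-moment bounds via It\^o/BDG/Gr\"onwall followed by Markov for the first part, and a synchronous-coupling argument with the decomposition through the intermediate empirical measure $\widetilde\rho_{X,t}^N$ for the second part---matches the paper's approach closely, including the identification of the Lipschitz-stability term and the i.i.d.\ fluctuation term as the two key contributions.

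There is, however, a genuine gap in your treatment of the conditioning. You write the Gr\"onwall inequality directly for $\EE[\,\cdot\mid\Omega_M]$, but $\Omega_M$ depends on the entire trajectory up to time $T$ and is therefore \emph{not} $\mathcal{F}_t$-measurable for $t<T$. Two things break: (i) the stochastic integrals arising from It\^o's formula do not have vanishing conditional expectation on a non-adapted event, so you cannot pass from the SDE to a differential inequality for $\EE[\absnormal{e_t^i}^2\mid\Omega_M]$; and (ii) the Lipschitz bound on $x_\alpha(\widehat\rho_{X,t}^N)-x_\alpha(\widetilde\rho_{X,t}^N)$ requires a pointwise lower bound on the denominator $\|\omega_\alpha^\TE\|_{L^1(\widehat\rho_{X,t}^N)}$, which is only available \emph{pathwise} under the cutoff and therefore has to sit inside the expectation before you can invoke it.

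The paper resolves this by replacing $\mathbbm{1}_{\Omega_M}$ with an \emph{adapted} localizing process $I_M(t)$ (equal to $1$ as long as the running supremum of the fourth moments stays below $M$, and $0$ afterwards), which satisfies $\mathbbm{1}_{\Omega_M}\leq I_M(t)$ and the absorbing property $I_M(t)=I_M(t)I_M(s)$ for $s\leq t$. One then derives a Gr\"onwall inequality for $\EE[\absnormal{e_t^i}^2 I_M(t)]$: the adaptedness makes the stochastic integrals vanish in expectation, and the absorbing property lets you insert $I_M(s)$ inside the time integrals so that the pointwise Lipschitz bounds become available. Only at the very end does one pass from $I_M(t)$ back to $\mathbbm{1}_{\Omega_M}$ via the pointwise inequality, recovering the conditional expectation in the statement. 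Your exchangeability argument to pass from the average to the per-particle bound is fine; the paper instead takes the maximum over $i$ directly, but either route works once the localization is handled correctly.
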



\begin{proof}
	The proof is based on the arguments of~\cite[Section~3.3]{fornasier2021consensus} about the mean-field approximation of CBO.
	First we compute a bound for $\EE[\sup_{t\in[0,T]}\frac{1}{N}\sum_{i=1}^N\max\{\absnormal{X_{t}^i}^4 + \absnormal{V_{t}^i}^4,\absnormal{\OX_{t}^i}^4 + \absnormal{\OV_{t}^i}^4\}]$, which is then used to derive a mean-field approximation for PSO conditioned on the set $\Omega_M$ of uniformly bounded processes.
	
	\noindent\textit{Step 1:} 
	Using standard inequalities and Jensen's inequality allows to derive the bound
	\begin{equation} \label{eq:proof:MFA:part1_X}
	\begin{split}
		\EE\left[\sup_{t\in[0,T]}\absnormal{X_{t}^i}^4\right]
		&\lesssim
			\EE[\absnormal{X_{0}^i}^4]
			+ \EE\left[\sup_{t\in[0,T]}\abs{\int_0^t V_{s}^i \,ds}^4\right]
		\leq C\left(\EE[\absnormal{X_{0}^i}^4] + \EE\left[\int_0^T \absnormal{V_{s}^i}^4 \,ds\right]\right)
	\end{split}
	\end{equation}
	with $C=C(T)$.
	For the velocities $V_{t}^i$ we first note that
	\begin{equation} \label{eq:proof:MFA:part1_V}
	\begin{split}
		\EE\left[\sup_{t\in[0,T]}\absnormal{V_{t}^i}^4\right]
		\lesssim
			\EE[\absnormal{V_{0}^i}^4]
			&+ \left(\frac{\gamma}{m}\right)^{\!4}\EE\left[\sup_{t\in[0,T]}\abs{\int_0^t V_{s}^i \,ds}^4\right]
			+ \left(\frac{\lambda}{m}\right)^{\!4}\EE\left[\sup_{t\in[0,T]}\abs{\int_0^t \left(x_{\alpha}(\widehat{\rho}_{X,s}^N)-X_{s}^i\right) ds}^4\right]\\
			&+ \left(\frac{\sigma}{m}\right)^{\!4}\EE\left[\sup_{t\in[0,T]}\abs{\int_0^t D\left(x_{\alpha}(\widehat{\rho}_{X,s}^N)-X_{s}^i\right) dB_s^{i}}^4\right].
	\end{split}
	\end{equation}
	While the two middle terms on the right-hand side of~\eqref{eq:proof:MFA:part1_V} can be controlled as before by applying Jensen's inequality, the last term is treated as follows.
	Since $\int_0^t D\left(x_{\alpha}(\widehat{\rho}_{X,s}^N)-X_{s}^i\right) dB_s^{i}$ is a martingale we can apply the Burkholder-Davis-Gundy inequality~\cite[Chapter IV, Theorem 4.1]{RevuzYor1999martingales}, which gives
	\begin{equation}
	\begin{split}
		\EE\left[\sup_{t\in[0,T]}\abs{\int_0^t D\left(x_{\alpha}(\widehat{\rho}_{X,s}^N)-X_{s}^i\right) dB_s^{i}}^4\right]
		&\lesssim 
		\sup_{t\in[0,T]}\EE\left[\left(\int_0^t \abs{x_{\alpha}(\widehat{\rho}_{X,s}^N)-X_{s}^i}^2 ds\right)^2\right]\\
		&\leq C\EE\left[\int_0^T \abs{x_{\alpha}(\widehat{\rho}_{X,s}^N)-X_{s}^i}^4 ds\right],
	\end{split}
	\end{equation}
	where the latter step is again due to Jensen's inequality and with a constant $C=C(T)$.
	Utilizing these bounds allows to continue the inequality in~\eqref{eq:proof:MFA:part1_V} and to obtain
	\begin{equation} \label{eq:proof:MFA:part2_V}
	\begin{split}
		\EE\left[\sup_{t\in[0,T]}\absnormal{V_{t}^i}^4\right]
		&\leq C\left(
			\EE[\absnormal{V_{0}^i}^4]
			+ \EE\left[\int_0^T \absnormal{X_{s}^i}^4 + \absnormal{x_{\alpha}(\widehat{\rho}_{X,s}^N)}^4 + \absnormal{V_{s}^i}^4 \,ds\right]
		\right)
	\end{split}
	\end{equation}
	with $C=C(\gamma/m, \lambda/m, \sigma/m, T)$.
	Since according to \cite[Lemma~3.3]{carrillo2018analytical} it holds
	\begin{equation*}
		\absnormal{x_{\alpha}(\widehat{\rho}_{X,s}^N)}^2
		\leq \int \absnormal{x}^2 \frac{\omega_\alpha^\TE(x)}{\norm{\omega_\alpha^\TE}_{L_1(\widehat{\rho}_{X,s}^N)}}\,d\widehat{\rho}_{X,s}^N(x)
		\leq b_1 + b_2 \int \absnormal{x}^2 \,d\widehat{\rho}_{X,s}^N(y)
		= b_1 + b_2 \frac{1}{N} \sum_{i=1}^N \,\absnormal{X_{s}^i}^2
	\end{equation*}
	with $b_1=0$ and $b_2=e^{\alpha(\overline\TE-\underline\TE)}$ in the case that $\TE$ is bounded, and
	\begin{equation*}
		b_1 = R^2+b_2^2 \ \text{ and } \ b_2 = \frac{2L_\TE \max\{1,\absnormal{x^*}^2\}}{c_\TE}\left(1+\frac{1}{\alpha c_\TE R^2}\right)
	\end{equation*}
	in the case that $\TE$ satisfies the coercivity assumption~\ref{asm:growth}, we eventually obtain the upper bound
	\begin{equation} \label{eq:proof:MFA:part3_V}
	\begin{split}
		\EE\left[\sup_{t\in[0,T]}\absnormal{V_{t}^i}^4\right]
		&\leq C\left(
			1
			+ \EE[\absnormal{V_{0}^i}^4]
			+ \EE\left[\int_0^T \absnormal{X_{s}^i}^4 + \frac{1}{N} \sum_{j=1}^N \,\absnormal{X_{s}^j}^4 + \absnormal{V_{s}^i}^4 \,ds\right]
		\right)
	\end{split}
	\end{equation}
	with $C=C(\gamma/m, \lambda/m, \sigma/m, T, b_1, b_2)$.
	Adding up \eqref{eq:proof:MFA:part1_X} and \eqref{eq:proof:MFA:part3_V} yields
	\begin{equation} \label{eq:proof:MFA:XYV}
	\begin{split}
		&\EE\left[\sup_{t\in[0,T]}\absnormal{X_{t}^i}^4 + \absnormal{V_{t}^i}^4\right] \leq C\left(
			1
			+ \EE[\absnormal{X_{0}^i}^4 + \absnormal{V_{0}^i}^4]
			+ \EE\left[\int_0^T \absnormal{X_{s}^i}^4 + \frac{1}{N} \sum_{j=1}^N \,\absnormal{X_{s}^j}^2 + \absnormal{V_{s}^i}^4 \,ds\right]
		\right),
	\end{split}
	\end{equation}
	which, averaged over $i$, allows to derive the bound
	\begin{equation} \label{eq:proof:MFA:XYV_averaged}
	\begin{split}
		&\EE\left[\sup_{t\in[0,T]}\frac{1}{N}\sum_{i=1}^N \left( \absnormal{X_{t}^i}^4 + \absnormal{V_{t}^i}^4 \right)\right] \\
		&\qquad\qquad\leq C\left(
			1
			+ \EE\left[\frac{1}{N}\sum_{i=1}^N \left(\absnormal{X_{0}^i}^4 + \absnormal{V_{0}^i}^4\right)\right]
			+ \int_0^T \EE\left[\frac{1}{N}\sum_{i=1}^N \left( \absnormal{X_{s}^i}^4 + \absnormal{V_{s}^i}^4 \right)\right] ds
		\right).
	\end{split}
	\end{equation}
	An application of Gr\"onwall's inequality ensures that $\EE\sup_{t\in[0,T]}\big[\frac{1}{N}\sum_{i=1}^N \left( \absnormal{X_{t}^i}^4 + \absnormal{V_{t}^i}^4 \right)\big]$ is bounded independently of $N$ by some constant $K=K(\gamma/m, \lambda/m, \sigma/m, T, b_1, b_2)$.
	Note, that the constant~$K$ does in particular not depend on $N$ or $d$.
	With identical arguments for the processes $(\OX_{t}^i,\OV_{t}^i)$ an analogous bound can be obtained for $\EE\big[\sup_{t\in[0,T]}\frac{1}{N}\sum_{i=1}^N \big( \absnormal{\OX_{t}^i}^4 + \absnormal{\OV_{t}^i}^4 \big)\big]$.
	The first claim of the statement now follows from Markov's inequality.
	
	\noindent\textit{Step 2:} 
	We define the cutoff function
	\begin{equation}
		I_M(t) = \begin{cases}
			1, & \text{if } \frac{1}{N}\sum_{i=1}^N\max\big\{\absnormal{X_{s}^i}^4+\absnormal{V_{s}^i}^4,\absnormal{\OX_{s}^i}^4+\absnormal{\OV_{s}^i}^4\big\} \leq M \text{ for all } s\in[0,t],\\
			0, & \text{else,}
		\end{cases}
	\end{equation}
	which is a random variable adapted to the natural filtration and satisfying $\mathbbm{1}_{\Omega_M}\leq I_M(t)$ pointwise for all $t\in[0,T]$ as well as $I_M(t) = I_M(t)I_M(s)$ for all $s\in[0,t]$.
	Firstly, for the positions, by using standard inequalities and Jensen's inequality, we obtain the bound
	\begin{equation} \label{eq:proof:MFA2:X}
	\begin{split}
		\EE[\absnormal{X_{t}^i-\OX_{t}^i}^2I_M(t)]
		&\lesssim 
		\EE[\absnormal{X_{0}^i-\OX_{0}^i}^2] + \EE\left[\abs{\int_0^t \big(V_{s}^i-\OV_{s}^i\big)I_M(s)\,ds}^2\right]\\
		&\leq
		C\left(\EE[\absnormal{X_{0}^i-\OX_{0}^i}^2] + \int_0^t \EE\left[\absnormal{V_{s}^i-\OV_{s}^i}^2I_M(s)\right]ds\right)
	\end{split}
	\end{equation}
	with $C=C(T)$.
	Secondly, for the velocities we have
	\begin{equation} \label{eq:proof:MFA2:V}
	\begin{split}
		\EE[\absnormal{V_{t}^i-\OV_{t}^i}^2I_M(t)]
		&\lesssim 
		\EE[\absnormal{V_{0}^i-\OV_{0}^i}^2]
			+ \left(\frac{\gamma}{m}\right)^{\!2} \EE\left[\abs{\int_0^t\big(V_{s}^i-\OV_{s}^i\big)I_M(s)\,ds}^2\right]\\
		&\qquad\qquad	+ \left(\frac{\lambda}{m}\right)^{\!2} \EE\left[\abs{\int_0^t\Big(\big(x_{\alpha}(\widehat{\rho}_{X,s}^N)-X_{s}^i\big)-\big(x_{\alpha}(\rho_{X,s})-\OX_{s}^i\big)\Big)I_M(s)\,ds}^2\right]\\
		&\qquad\qquad	+ \left(\frac{\sigma}{m}\right)^{\!2} \EE\left[\abs{\int_0^t\Big(\absnormal{x_{\alpha}(\widehat{\rho}_{X,s}^N)-X_{s}^i}-\absnormal{x_{\alpha}(\rho_{X,s})-\OX_{s}^i}\Big)I_M(s)\,dB_s^{i}}^2\right] \\
		&\leq 
		C\left(\EE[\absnormal{V_{0}^i-\OV_{0}^i}^2]
			+ \int_0^t\EE\left[\absnormal{V_{s}^i-\OV_{s}^i}^2I_M(s)\right]ds\right.\\
		&\qquad\qquad+ \int_0^t\EE\left[\big(\absnormal{x_{\alpha}(\widehat{\rho}_{X,s}^N)-x_{\alpha}(\rho_{X,s})}^2+\absnormal{X_{s}^i-\OX_{s}^i}^2\big)I_M(s)\right]ds
	\end{split}
	\end{equation}	
	with $C=C(\gamma/m, \lambda/m, \sigma/m, T)$.
	In the first step of~\eqref{eq:proof:MFA2:V} we used that the processes $(X_{t}^i,V_{t}^i)$ and $(\OX_{t}^i,\OV_{t}^i)$ share the Brownian motion paths, and in the second both It\^o isometry and Jensen's inequality.
	In order to conclude, it remains to control the term $\EE\left[\absnormal{x_{\alpha}(\widehat{\rho}_{X,s}^N)-x_{\alpha}(\rho_{X,s})}^2I_M(s)\right]$.
	To do so, in analogy to the definition of $\widehat{\rho}_{X,s}^N$, let us denote by $\overline{\rho}_{X,s}^N$ the empirical measure associated with the processes~$\OX_{s}^i$, i.e., $\overline{\rho}_{X,s}^N:=\frac{1}{N}\sum_{i=1}^{N}\delta_{\OX_s^{i}}$.
	Then, by following the proofs of \cite[Lemma~3.2]{carrillo2018analytical} and \cite[Lemma~3.1]{FHPS1}, and exploiting the boundedness ensured by the multiplication with the random variable~$I_M(s)$, we obtain
	\begin{equation*}
	\begin{split}
		\EE\left[\absnormal{x_{\alpha}(\widehat{\rho}_{X,s}^N)-x_{\alpha}(\rho_{X,s})}^2I_M(s)\right]
		&\lesssim
		\EE\left[\absnormal{x_{\alpha}(\widehat{\rho}_{X,s}^N)-x_{\alpha}(\overline{\rho}_{X,s}^N)}^2I_M(s)\right] + \EE\left[\absnormal{x_{\alpha}(\overline{\rho}_{X,s}^N)-x_{\alpha}(\rho_{X,s})}^2I_M(s)\right]\\
		&\leq
		C\left(\frac{1}{N}\sum_{i=1}^N \,\EE[\absnormal{X_{s}^i-\OX_{s}^i}^2I_M(s)] + N^{-1}\right)\\
		&\leq
		C\left(\max_{i=1,\dots,N} \EE[\absnormal{X_{s}^i-\OX_{s}^i}^2I_M(s)] + N^{-1}\right)
	\end{split}
	\end{equation*}
	with $C=C(\alpha, L_\TE, c_\TE, \abs{x^*}, M, b_1, b_2)$.
	Inserting the latter into~\eqref{eq:proof:MFA2:V}, and adding up~\eqref{eq:proof:MFA2:X} and \eqref{eq:proof:MFA2:V} yields
	\begin{equation}
	\begin{split}
		&\EE\big[\big(\absnormal{X_{t}^i-\OX_{t}^i}^2 +  \absnormal{V_{t}^i-\OV_{t}^i}^2\big)I_M(t)\big]\\
		&\qquad\qquad \leq C\int_0^t\EE\left[\big(\absnormal{X_{s}^i-\OX_{s}^i}^2 + \absnormal{V_{s}^i-\OV_{s}^i}^2\big)I_M(s)\right] + \max_{j=1,\dots,N} \EE\left[\absnormal{X_{s}^j-\OX_{s}^j}^2I_M(s)\right] + N^{-1} ds
	\end{split}
	\end{equation}
	with $C=C(\alpha, \gamma/m, \lambda/m, \sigma/m, T, L_\TE, c_\TE, \abs{x^*}, M, b_1, b_2)$ and where we used that the processes $(X_{t}^i,V_{t}^i)$ and $(\OX_{t}^i,\OV_{t}^i)$ share the initial conditions.
	Lastly, by taking the maximum over $i$ on both sides we get
	\begin{equation}
	\begin{split}
		&\max_{i=1,\dots,N}\EE\big[\big(\absnormal{X_{t}^i-\OX_{t}^i}^2 +  \absnormal{V_{t}^i-\OV_{t}^i}^2\big)I_M(t)\big]\\
		&\qquad\qquad\leq C\int_0^t\EE\left[\max_{j=1,\dots,N} \EE\left[\big(\absnormal{X_{s}^j-\OX_{s}^j}^2+\absnormal{V_{s}^j-\OV_{s}^j}^2\big)I_M(s)\right] + N^{-1}\right]ds
	\end{split}
	\end{equation}
	with the $C$ from before.
	After recalling the definition of the conditional expectation, an application of Gr\"onwall's inequality concludes the proof.
\end{proof}

\begin{remark}
	While the first part of Theorem~\ref{thm:MFA} about the uniform in time boundedness of the empirical measures holds mutatis mutandis for the PSO dynamics with memory effects~\eqref{eq:PSO_with_memory} and~\eqref{eq:PSO_with_memory_mf}, it seems not straightforward to obtain the second part in this setting due to the way the memory effects are implemented in~\eqref{eq:PSO_with_memory_Y}.
	We leave the investigation of this extension to future research, in particular in regard to the question whether other implementations of memory effects might resolve this issue.
\end{remark}

\subsection{Convergence of PSO without Memory Effects in Probability}

Combining Theorem~\ref{thm:MFA} with the convergence analysis of the mean-field dynamics~\eqref{eq:PSO_without_memory} as described in Theorem~\ref{thm:convergence}, as well as a classical result about the numerical approximation of SDEs allows to obtain convergence guarantees with provable polynomial complexity for the numerical PSO method as stated in Theorem~\ref{thm:convergence_probability} below.
Let us, for the reader's convenience, recall from \cite[Section~6]{grassi2021mean} that a possible discretized version of the interacting particle system~\eqref{eq:PSO_without_memory} is given by 
\begin{subequations} \label{eq:PSO_without_memory_discrete}
\begin{align}
	X_{(k+1)\Delta t}^i   &= X_{k\Delta t}^i + {\Delta t}V_{(k+1)\Delta t}^i ,\\
	V_{(k+1)\Delta t}^i &= \left(\frac{m}{m+\Delta t \gamma}\right) V_{k\Delta t}^i
		    +\left(\frac{\Delta t\lambda}{m+\Delta t \gamma}\right)\left(x_{\alpha}(\widehat{\rho}_{X,k\Delta t}^N)-X_{k\Delta t}^i\right)\\
		  &  \qquad\qquad\qquad\qquad\ \ \; \, +\left(\frac{\sqrt{\Delta t}\sigma}{m+\Delta t \gamma}\right) D\!\left(x_{\alpha}(\widehat{\rho}_{X,k\Delta t}^N)-X_{k \Delta t}^i\right) B_{k\Delta t}^{i} \nonumber
\end{align}
\end{subequations}
for $k=0,\dots,K$ and where $\big((B_{k\Delta t}^{i})_{k=1,\dots,K-1}\big)_{i=1,\dots,N}$ are independent, identically distributed standard Gaussian random vectors in $\RR^d$.

\begin{thm} \label{thm:convergence_probability}
	Let $\epsilon_\mathrm{total}>0$ and $\delta\in(0,1/2)$.
	Then, under the assumptions of Theorems~\ref{thm:convergence} and~\ref{thm:MFA}, it holds for the discretized PSO dynamics~\eqref{eq:PSO_without_memory_discrete} that
	\begin{equation} \label{eq:thm:convergence_probability}
		\abs{\frac{1}{N}\sum_{i=1}^N X^i_{K\Delta t} - x^*}^2 \leq \epsilon_\mathrm{total}
	\end{equation}
	with probability larger than $1-\big(\delta + \epsilon_\mathrm{total}^{-1}(C_\mathrm{NA}(\Delta t)^m + C_\mathrm{MFA}N^{-1} + C_\mathrm{LLN}N^{-1} + \tilde\varepsilon + \varepsilon^{2\nu}/\eta^2)\big)$.
	Here, $m$ denotes the order of accuracy of the used discretization scheme.
	Moreover, besides problem dependent factors and the parameters of the method, the dependence of the constants is as follows.
	$C_\mathrm{NA}$ depends linearly on $d$ and $N$, and exponentially on $T^*$.
	$C_\mathrm{MFA}$ depends on exponentially on $\alpha$, $T^*$ and $\delta^{-1}$.
	$C_\mathrm{LLN}$ depends on the moment bound from Theorem~\ref{thm:wellposedness}.
	Lastly, $\tilde\varepsilon$ and $\varepsilon$ are chosen according to Theorem~\ref{thm:convergence}.
\end{thm}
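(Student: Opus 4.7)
The plan is to carry out a five-term error decomposition, each piece of which is then controlled either by a result already established in the paper or by a standard result from the SDE-discretization literature. Let $(X_t^i, V_t^i)_{t\ge 0}$ denote the continuous-time interacting particle system~\eqref{eq:PSO_without_memory}, let $(\OX_t^i, \OV_t^i)_{t\ge 0}$ be $N$ independent copies of the mean-field process~\eqref{eq:PSO_without_memory_mf} sharing initial data and Brownian paths with the interacting system, let $T^* = K\Delta t$, and let $\tilde x$ be the consensus point produced by Theorem~\ref{thm:convergence}. Denoting by $X_{K\Delta t}^{i,\mathrm{num}}$ the output of the scheme~\eqref{eq:PSO_without_memory_discrete}, I would split
\begin{equation*}
\abs{\tfrac{1}{N}\sum_{i=1}^N X_{K\Delta t}^{i,\mathrm{num}} - x^*}^2 \le 5\big(E_\mathrm{NA} + E_\mathrm{MFA} + E_\mathrm{LLN} + E_{\tilde x} + E_{x^*}\big),
\end{equation*}
where the terms are, respectively, the discretization error $\absnormal{\frac{1}{N}\sum_i(X_{K\Delta t}^{i,\mathrm{num}} - X_{T^*}^i)}^2$, the mean-field approximation error $\absnormal{\frac{1}{N}\sum_i(X_{T^*}^i - \OX_{T^*}^i)}^2$, the empirical-average fluctuation $\absnormal{\frac{1}{N}\sum_i \OX_{T^*}^i - \EE[\OX_{T^*}]}^2$, the contraction-to-consensus term $\absnormal{\EE[\OX_{T^*}] - \tilde x}^2$, and the Laplace/inverse-continuity term $\absnormal{\tilde x - x^*}^2$.

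Each term is then bounded in expectation separately. For $E_\mathrm{NA}$, a standard strong-convergence result for the scheme~\eqref{eq:PSO_without_memory_discrete} (an Euler--Maruyama-style analysis exploiting the local Lipschitz coefficients of~\eqref{eq:PSO_without_memory} together with the fourth-moment propagation underlying~\eqref{eq:wellposedness_regularity}) yields $\EE[E_\mathrm{NA}] \le C_\mathrm{NA}(\Delta t)^m$, with $C_\mathrm{NA}$ scaling linearly in $d$ and $N$ and exponentially in $T^*$. For $E_\mathrm{MFA}$, I would condition on the boundedness event $\Omega_M$ from Theorem~\ref{thm:MFA}: choosing $M = 2K/\delta$ gives $\PP(\Omega_M^c) \le \delta$, while on $\Omega_M$ the triangle inequality combined with the per-particle bound $\max_i \EE[\absnormal{X_t^i - \OX_t^i}^2 \mid \Omega_M] \le C_\mathrm{MFA}N^{-1}$ produces $\EE[E_\mathrm{MFA} \mid \Omega_M] \le C_\mathrm{MFA}N^{-1}$. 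For $E_\mathrm{LLN}$, independence and identical distribution of the $\OX_{T^*}^i$ immediately give $\EE[E_\mathrm{LLN}] \le N^{-1}\EE[\absnormal{\OX_{T^*} - \EE[\OX_{T^*}]}^2] \le C_\mathrm{LLN}N^{-1}$ via the second-moment bound in~\eqref{eq:wellposedness_regularity}. Finally, Theorem~\ref{thm:convergence} delivers $E_{\tilde x} \le \tilde\varepsilon$ as soon as $T^* \gtrsim \chi^{-1}\log(1/\tilde\varepsilon)$, and the Laplace-principle and inverse-continuity steps in that proof together with Assumption~\ref{asm:icp} yield $E_{x^*} \le \varepsilon^{2\nu}/\eta^2$ for $\alpha$ sufficiently large depending on $\varepsilon$.

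The final assembly combines the conditional expected-value bounds with Markov's inequality. On $\Omega_M$ the conditional expectations of $E_\mathrm{NA} + E_\mathrm{MFA} + E_\mathrm{LLN}$ aggregate to at most $C_\mathrm{NA}(\Delta t)^m + C_\mathrm{MFA}N^{-1} + C_\mathrm{LLN}N^{-1}$, while the last two summands are deterministic. Markov's inequality applied to $\absnormal{\frac{1}{N}\sum_i X_{K\Delta t}^{i,\mathrm{num}} - x^*}^2$ on $\Omega_M$ then bounds the conditional probability of failure by $\epsilon_\mathrm{total}^{-1}\big(C_\mathrm{NA}(\Delta t)^m + C_\mathrm{MFA}N^{-1} + C_\mathrm{LLN}N^{-1} + \tilde\varepsilon + \varepsilon^{2\nu}/\eta^2\big)$, and a union bound with $\PP(\Omega_M^c) \le \delta$ delivers the probability estimate stated in~\eqref{eq:thm:convergence_probability}.

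The principal technical obstacle is the conditioning in step~2: Theorem~\ref{thm:MFA} provides only a conditional mean-square estimate on $\Omega_M$, and the cutoff scale $M = 2K/\delta$ enters $C_\mathrm{MFA}$ through the Lipschitz behaviour of the consensus map $x_\alpha$ on truncated empirical measures. Tracking this dependence carefully is what forces the exponential dependence of $C_\mathrm{MFA}$ on $\alpha$, $T^*$, and $\delta^{-1}$, and ensuring that removal of the conditioning does not inflate the other error contributions is the only place where real care is required; the remaining steps are then essentially bookkeeping.
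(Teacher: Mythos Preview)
Your proposal is correct and follows essentially the same approach as the paper: the same five-term decomposition, the same bounds for each piece (numerical scheme convergence, Theorem~\ref{thm:MFA} on $\Omega_M$, the law of large numbers for i.i.d.\ copies, and Theorem~\ref{thm:convergence} for the last two), and the same assembly via conditional Markov plus the union bound with $\PP(\Omega_M^c)\le\delta$. The only cosmetic difference is that the paper writes the decomposition directly in conditional expectation and invokes $\PP(\Omega_M)\geq 1/2$ to pass between conditional and unconditional expectations of the nonnegative summands, which is exactly what your careful handling of the conditioning in step~2 amounts to.
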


\begin{proof}
	The overall error can be decomposed as
	\begin{equation} \label{eq:proof:convergence_probability:overall_error}
	\begin{split}
		&\EE\left[\abs{\frac{1}{N}\sum_{i=1}^N X^i_{K\Delta t} - x^*}^2\Bigg|\;\Omega_M\right]
		\lesssim
			\EE\left[\abs{\frac{1}{N}\sum_{i=1}^N \big(X^i_{K\Delta t} - X^i_{T^*}\big)}^2\right]
			+\EE\left[\abs{\frac{1}{N}\sum_{i=1}^N \big(X^i_{T^*} - \OX^i_{T^*}\big)}^2\Bigg|\;\Omega_M\right] \\
		&\qquad\qquad\qquad\qquad\qquad\qquad\qquad\;\;
			+\EE\left[\abs{\frac{1}{N}\sum_{i=1}^N \OX^i_{T^*} - \EE\big[\,\OX_{T^*}\!\big]}^2\right]
			+\absbig{\EE\big[\,\OX_{T^*}\!\big] - \tilde{x}}^2 + \abs{\tilde{x}-x^*}^2,
	\end{split}
	\end{equation}
	where we used that $\PP(\Omega_M) \geq (1-\delta) \geq 1/2$.
	By means of a classical result about the convergence of numerical schemes for SDEs~\cite{platen1999introduction}, the first term in~\eqref{eq:proof:convergence_probability:overall_error} can be bounded by $C_\mathrm{NA}(\Delta t)^m$.
	For the second term, Theorem~\ref{thm:MFA} gives the estimate $C_\mathrm{MFA}N^{-1}$.
	The third term can be bounded by $C_\mathrm{LLN}N^{-1}$ as a consequence of the law of large numbers.
	Eventually, Theorem~\ref{thm:convergence} allows us to choose $T^*=\mathcal{O}\big(\log(\tilde\varepsilon^{-1})/\chi\big)$ sufficiently large to reach any prescribed accuracy~$\tilde\varepsilon$ for the next-to-last term as well as $\varepsilon^{2\nu}/\eta^2$ for the last term by a suitable choice of $\alpha$.
	With these individual bounds we obtain 
	\begin{equation} \label{eq:proof:convergence_probability:overall_error_2}
	\begin{split}
		&\EE\left[\abs{\frac{1}{N}\sum_{i=1}^N X^i_{K\Delta t} - x^*}^2\Bigg|\;\Omega_M\right]
		\leq C_\mathrm{NA}(\Delta t)^m + C_\mathrm{MFA}N^{-1} + C_\mathrm{LLN}N^{-1} + \tilde\varepsilon + \varepsilon^{2\nu}/\eta^2.
	\end{split}
	\end{equation}
	It now remains to estimate the probability of the set~$K^N_{\epsilon_\mathrm{total}}\subset\Omega$, where Inequality~\eqref{eq:thm:convergence_probability} does not hold.
	By utilizing the conditional version of Markov's inequality together with the formerly established bound~\eqref{eq:proof:convergence_probability:overall_error_2}, we have
	\begin{equation}
	\begin{split}
		\PP\big(K^N_{\epsilon_\mathrm{total}}\big) 
		&= \PP\big(K^N_{\epsilon_\mathrm{total}} \cap \Omega_M\big) + \PP\big(K^N_{\epsilon_\mathrm{total}} \cap \Omega_M^c\big)\\
		&\leq \PP\big(K^N_{\epsilon_\mathrm{total}} \big|\, \Omega_M\big)\,\PP(\Omega_M) + \PP\big(\Omega_M^c\big)\\
		&\leq \frac{C_\mathrm{NA}(\Delta t)^m + C_\mathrm{MFA}N^{-1} + C_\mathrm{LLN}N^{-1} + \tilde\varepsilon + \varepsilon^{2\nu}/\eta^2}{\epsilon_\mathrm{total}} + \delta
	\end{split}
	\end{equation}
	for a sufficiently large choice of $M$ in~\eqref{eq:thm:MFA:OmegaM}.
\end{proof}

A result in this spirit was first presented for CBO in \cite[Theorem~14]{fornasier2021consensus} and is hereby extended to PSO.

\section{Implementation of PSO and Numerical Results} \label{sec:numerics}

\noindent
The purpose of this section is twofold.
At first, an efficient implementation of PSO is provided, which is particularly suited for high-dimensional optimization problems arising in machine learning.
Its performance is then evaluated on a standard benchmark problem, where we investigate the influence of the parameters, and the training of a neural network classifier for handwritten digits.
Furthermore, several relevant implementational aspects are discussed, including the computational complexity and scalability, modifications inspired from simulated annealing and evolutionary algorithms, and the numerical stability of the method.

\subsection{An Efficient Implementation of PSO} \label{sec:implementation}

Let us stress that PSO is an extremely versatile, flexible and customizable optimization method, which can be regarded as a black-box optimizer.
As a zero-order method it is not reliant on the gradient information and can be even applied to discontinuous objectives, making it inevitably superior to first-order optimization methods in cases where derivatives are computationally infeasible.
However, also in machine learning applications, where gradient-based optimizers are considered the state of the art, PSO may be of particular interest in view of vanishing or exploding gradient phenomena.

Typical objective functions appearing in machine learning are of the form
\begin{equation} \label{eq:objective_function_ML}
	\TE(x) = \frac{1}{M} \sum_{j=1}^M \TE_j(x),
\end{equation}
where $\TE_j$ is usually the loss of the $j$th training sample.
In order to run the scheme~\eqref{eq:PSO_with_memory}, frequent evaluations of $\TE$ are necessary, which may be computationally intense or even prohibitive in some applications.

\noindent
\textbf{Computational complexity:}
Inspired by mini-batch gradient descent, the authors of~\cite{jin2020random} developed a random batch method for interacting particle systems, which was employed for CBO in~\cite{carrillo2019consensus}.
In the same spirit, we present with Algorithm~\ref{algorithm:PSOadvanced} a computationally efficient implementation of PSO.
\begin{algorithm}[!ht]
\caption{Particle swarm optimization~(PSO)}
\begin{algorithmic}[1]
		\floatname{algorithm}{Procedure}
		\renewcommand{\algorithmicrequire}{\textbf{Input:}}
		\renewcommand{\algorithmicensure}{\textbf{Output:}}
		\renewcommand{\algorithmicloop}{\textbf{while }}
		\renewcommand\algorithmicdo{}
		\renewcommand\algorithmicthen{}
		
	\Require{Objective~$\TE$ as in~\eqref{eq:objective_function_ML}, time horizon~$T$ or number of epochs~$\#epochs$, discrete time step size~$\Delta t$, batch sizes~$n_N$ and~$n_{\TE}$, parameters~$m,\gamma,\lambda_1,\lambda_2,\sigma_1,\sigma_2,\alpha,\beta,\theta$ and $\kappa$, number of particles~$N$, initialization~$f_0$}
	\Ensure{Approximation~$y_{\alpha}(\widehat{\rho}_{Y,T}^N)$ of the global minimizer~$x^*$ of $\TE$}
	\State
		Generate the particles' initial positions and velocities $(X^i_0,V^i_0)_{i=1,\dots,N}$ according to a common initial law~$f_0$.
		Initialize the local best positions $Y^i_0=X^i_0$.
	\State
		Ensure that $n_\TE$ divides $M$ and $n_N$ divides $N$.
	\State
		Convert $T$ into $\#epochs$ or vice versa via $T=\#epochs \, (M/n_\TE) (N/n_N) \Delta t$. Set $k=0$ and $epoch=1$.
	\Loop{$epoch\leq \#epochs$ and stopping criterion not fulfilled}
	\State
		Partition $\{1,\dots,M\}$ into batches $\CB_k^1,\dots,\CB_k^{M/n_\TE}$ of batch size $n_\TE$.
	\For $b=1,\dots,M/n_\TE$
		\State Define the objective function on this batch as\label{algorithm:objectivebatch}
		\begin{equation}
			\TE_{\mathit{batch}}(x) = \frac{1}{n_\TE} \sum_{j\in \CB_k^{b}}\TE_j(x).
		\end{equation}
		\vspace{-0.8em}
		\State \parbox[t]{\dimexpr\linewidth-2.9em}{Partition the particles, i.e., the set $\{1,\dots,N\}$, into disjoint sets $\CP_k^1,\dots,\CP_k^{\,N/n_N}$ of size $n_N$.\strut}
		\For $n=1,\dots,N/n_N$ \label{algorithm:partitionN}
			\State \parbox[t]{\dimexpr\linewidth-4.4em}{Compute the consensus point $y_{\alpha}(\widehat \rho_{Y,k\Delta t}^{N/n_N})$ according to Equation~\eqref{eq:momentaneous_consensus} with objective $\TE_{\mathit{batch}}$ from the particles in~$\CP_k^n$, i.e., with the empirical measure $\widehat\rho_{Y,k\Delta t}^{N/n_N}=\frac{1}{n_N}\sum_{i\in \CP_k^{n}}\delta_{Y_{k\Delta t}^{i}}$. \label{algorithm:consensus_point_comp}\strut}
				\vspace{-0.6em}
			\State \parbox[t]{\dimexpr\linewidth-4.4em}{Update either all particles (full update) or only the particles in the current batch~$\CP_k^n$ (partial update) according to a discretized version of the PSO dynamics~\eqref{eq:PSO_with_memory}.\label{algorithm:update}\strut}
			\If {$k>0$ and $\big|y_{\alpha}(\rho_{Y,k\Delta t}^N)-y_{\alpha}(\rho_{Y,(k-1)\Delta t}^N)\big|$ is too small despite stopping criterion not fulfilled}
				\State \parbox[t]{\dimexpr\linewidth-5.9em}{Perform an independent Brownian motion for the positions or velocities of all particles.\label{algorithm:random_step}\strut}
			\EndIf
			\State Set $k=k+1$.
		\EndFor 
	\EndFor
	\State \parbox[t]{\dimexpr\linewidth-1.4em}{Check the stopping criterion and \textbf{break} if fulfilled. If not, employ the optional strategies described at the end of Section~\ref{sec:implementation}, set $epoch=epoch+1$ and continue.\label{algorithm:optionalstrategies}\strut}
	\EndLoop
	\State Compute the consensus point $y_{\alpha}(\widehat{\rho}_{Y,T}^N)$ according to Equation~\eqref{eq:momentaneous_consensus} with objective $\TE$ from all particles, i.e., with $\widehat{\rho}_{Y,T}^N=\frac{1}{N}\sum_{i=1}^N\delta_{Y_{T}^{i}}$. \label{algorithm:consensus_point_comp_final}
\end{algorithmic}
\label{algorithm:PSOadvanced}
\end{algorithm}
The mini-batch idea is present on two different levels.
In line~\ref{algorithm:objectivebatch}, the objective is defined with respect to a batch of the training data of size $n_\TE$, meaning that only a subsample of the data is considered.
One epoch is completed after each data sample was seen exactly once, i.e., after $M/n_\TE$ steps.
At each step the consensus point~$y_\alpha$ has to be computed, for which~$\TE_{\mathit{batch}}$ needs to be evaluated for $N$ particles.
This still constitutes the most significant computational effort.
However, the mini-batch idea can be leveraged for a second time.
In the \textbf{for} loop in line~\ref{algorithm:partitionN} we partition the particles into sets of size~$n_N$ and perform the updates of line~\ref{algorithm:update} only for the $n_N$~particles in the respective subset.
Since this is embarrassingly parallel, a parallel machine can be deployed to reduce the runtime by up to a factor~$p$ (the number of available processors).
While this is referred to as partial update, alternatively, on a sequential architecture, a full update can be made at every iteration, requiring all $N$~particles to be updated in line~\ref{algorithm:update}.
Apart from lowering the required computing resources tremendously, these mini-batch ideas actually improve the stability of the method and the capability of finding good optima by introducing more stochasticity into the algorithm.

Concerning additional computational complexity due to the usage of memory effects, let us point out that, except for the required storage of the local (historical) best positions and their objective values, the update rule~\eqref{eq:local_best update} in combination with the partial update allows to include such mechanisms with no additional cost by keeping track of the objective values of the local best positions.
In such case, only one function call of each~$\TE_{\mathit{batch}}$ per epoch and per particle is necessary, which is optimal and coincides with PSO without memory effects or CBO.
A different realization of~\eqref{eq:PSO_with_memory_Y} might result in a higher cost.

\noindent
\textbf{Implementational aspects:}
A discretization of the SDE~\eqref{eq:PSO_with_memory} in line~\ref{algorithm:update} can be obtained for instance from a simple Euler-Maruyama or semi-implicit scheme~\cite{platen1999introduction,higham2001algorithmic}, see, e.g., \cite[Equation~(6.3)]{grassi2021mean}.
In our numerical experiments below Equation~\eqref{eq:local_best update} is used for updating the local best position, which corresponds to $\kappa=1/(2\Delta t)$, $\theta=0$, and $\beta=\infty$.
Furthermore, the friction parameter is set according to $\gamma=1-m$, which is a typical choice in the literature.
Let us also remark that a numerically stable computation of the consensus point in lines~\ref{algorithm:consensus_point_comp} and~\ref{algorithm:consensus_point_comp_final} for~$\alpha\gg1$ can be obtained by replacing $\TE_{\mathit{batch}}$ with $\TE_{\mathit{batch}}-\widetilde{\underline{\TE}}$, where $\widetilde{\underline{\TE}}:=\min_{i\in \CP_k^n} \TE_{\mathit{batch}}(Y^i_{k\Delta t})$.

\noindent
\textbf{Cooling and evolutionary strategies:}
The PSO algorithm can be divided into two phases, an exploration phase, where the energy landscape is searched coarsely, and a determination phase, where the final output is identified.
While the former benefits from small $\alpha$ and large diffusion parameters, in the latter, $\alpha\gg1$ guarantees the selection of the best solution.
A cooling strategy inspired from simulated annealing allows to start with moderate $\alpha$ and relatively large diffusion parameters $\sigma_1,\sigma_2$.
After each epoch, $\alpha$ is multiplied by $2$, while the diffusion parameters follow the schedule $\sigma = \sigma / \log(epoch+2)$ for $\sigma\in\{\sigma_1,\sigma_2\}$.
Such strategy was proposed in \cite[Section~4]{carrillo2019consensus} for CBO.
In order to further reduce computational complexity, the provable decay of the variance suggests to decrease the number of agents by discarding particles in accordance with the empirical variance.
A possible schedule for the number of agents proposed in \cite[Section~2.2]{fornasier2020consensus} is to set
$N_{epoch+1} = \big\lceil N_{epoch}\big((1-\mu)+\mu\widetilde\Sigma_{epoch}/\Sigma_{epoch}\big)\big\rceil$
for $\mu\in[0,1]$ and where $\Sigma_{epoch}$ and $\widetilde\Sigma_{epoch}$ denote the empirical variances of the $N_{epoch}$ particles at the beginning and at the end of the current epoch.

\subsection{Numerical Experiments for the Rastrigin Function} \label{sec:numericalexperimentsRastrigin}
Before turning to high-dimensional optimization problems, let us discuss the parameter choices of PSO in moderate dimensions ($d=20$) at the example of the well-known Rastrigin benchmark function~$\TE(v)=\sum_{k=1}^d v_k^2 + \frac{5}{2}(1-\cos(2\pi v_k))$, which meets the requirements of Assumption~\ref{ass:assumptions} despite being highly non-convex with many spurious local optima.
To narrow down the number of tunable parameters, we let $\gamma=1-m$, choose $\alpha=100$, $N=100$, and update the local best position (if present) according to Equation~\eqref{eq:local_best update}, i.e., $\kappa=1/(2\Delta t)$, $\theta=0$, and $\beta=\infty$.
We moreover let $\lambda_2=1$ (or $\lambda=1$ for PSO without memory) and $\Delta t=0.01$, which are such that the algorithm either finds consensus or explodes within the time horizon~$T=100$ in all instances.
For simplicity we assume that $\sigma_1=\lambda_1\sigma_2$.
The algorithm is initialized with positions distributed according to $\mathcal{N}\big((2,\dots,2),4\Id\big)$ and velocities according to  $\mathcal{N}\big((0,\dots,0),\Id\big)$.
\begin{figure}[!ht]
\centering
\subcaptionbox{PSO without memory \label{fig:phasediagram_memory:0_drift1:0}}{\includegraphics[width=0.29\textwidth, trim=54 226 70 238,clip]{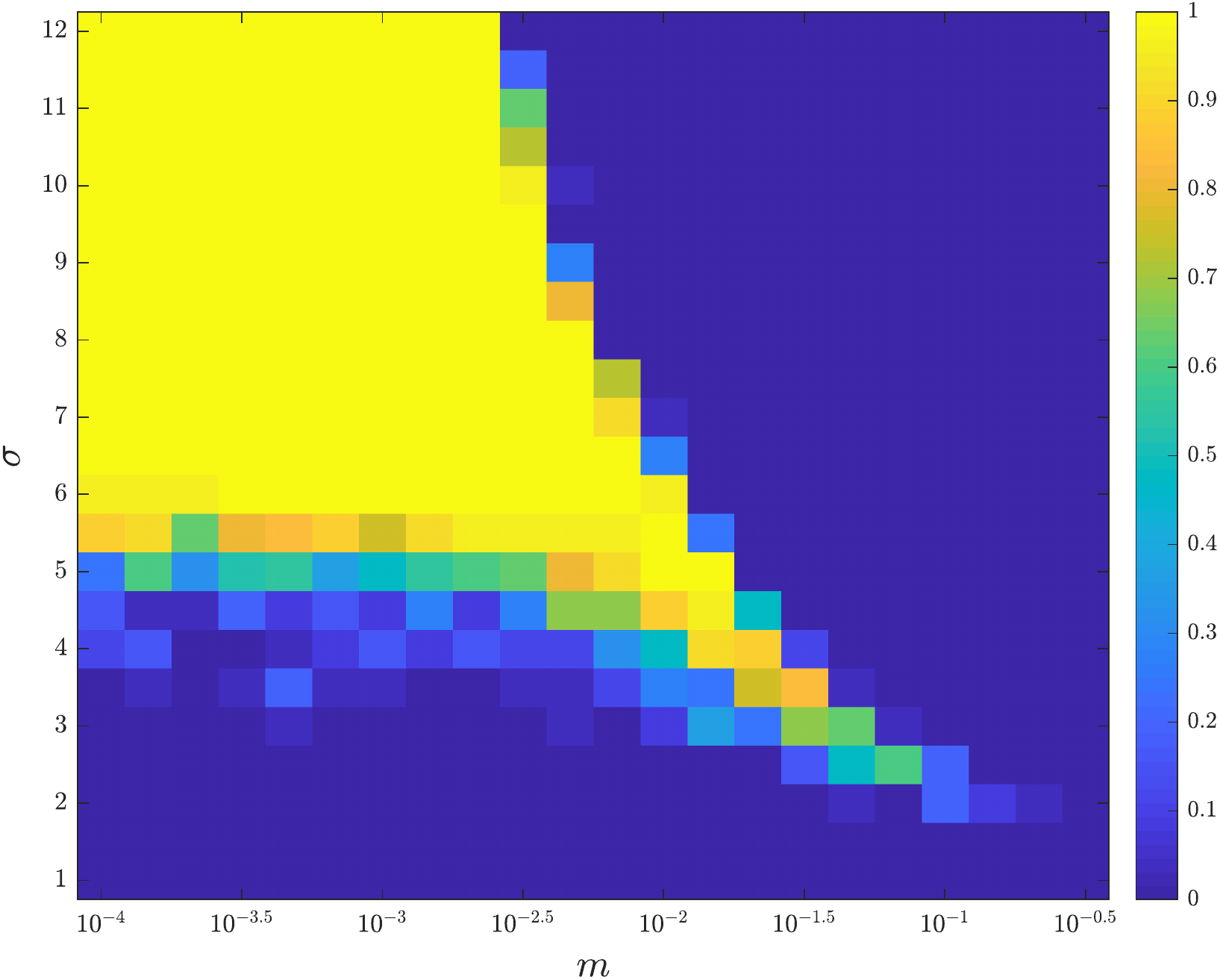}}
\hspace{1.2em}
\subcaptionbox{PSO with memory but no local best drift ($\lambda_1=0$, $\sigma_1=0$)\label{fig:phasediagram_memory:1_drift1:0}}{\includegraphics[width=0.29\textwidth, trim=54 226 70 238,clip]{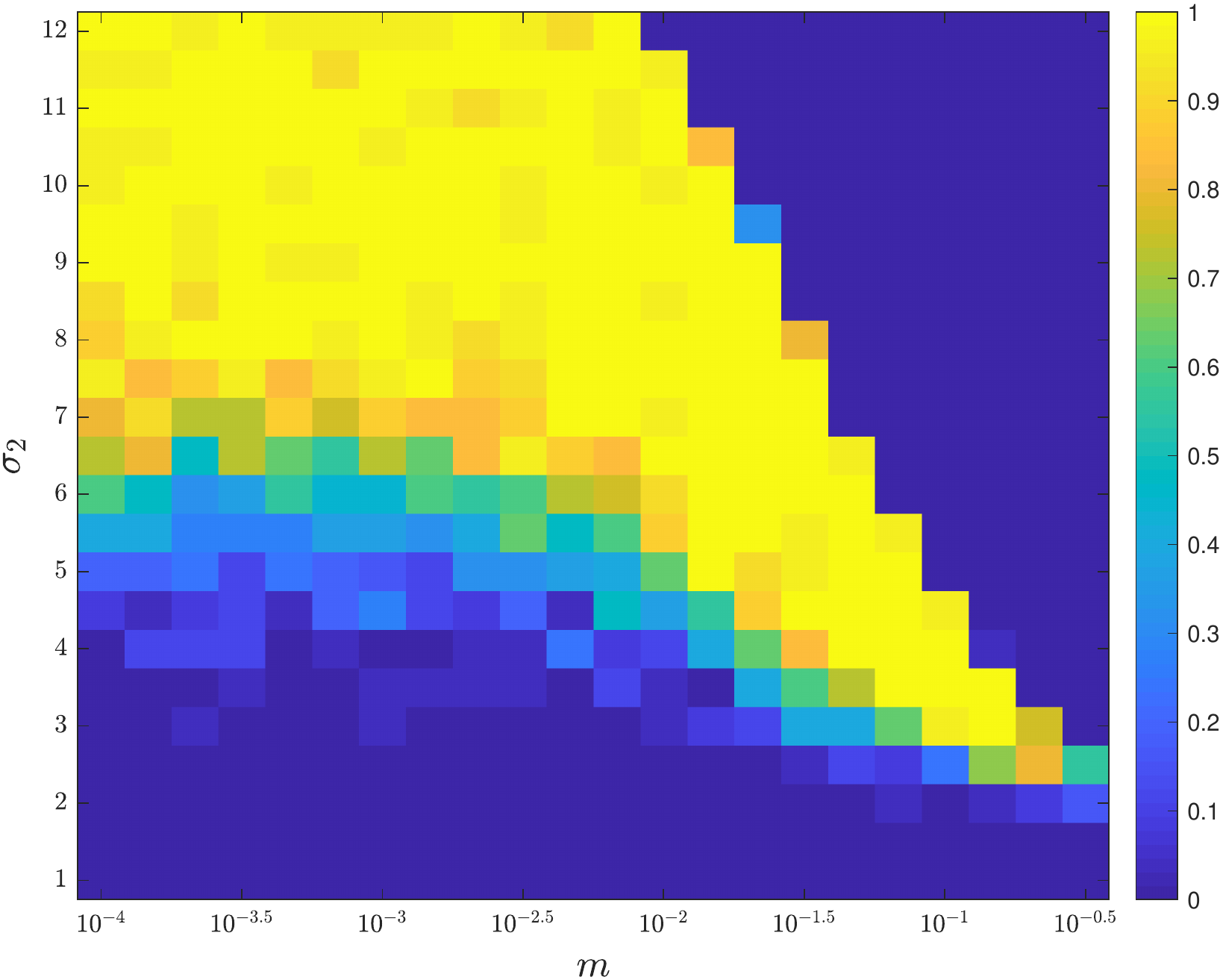}}
\hspace{1.2em}
\subcaptionbox{PSO with memory and local best drift ($\lambda_1=0.4$, $\sigma_1=0.4\sigma_2$)\label{fig:phasediagram_memory:1_drift1:0.25}}{\includegraphics[width=0.29\textwidth, trim=54 226 70 238,clip]{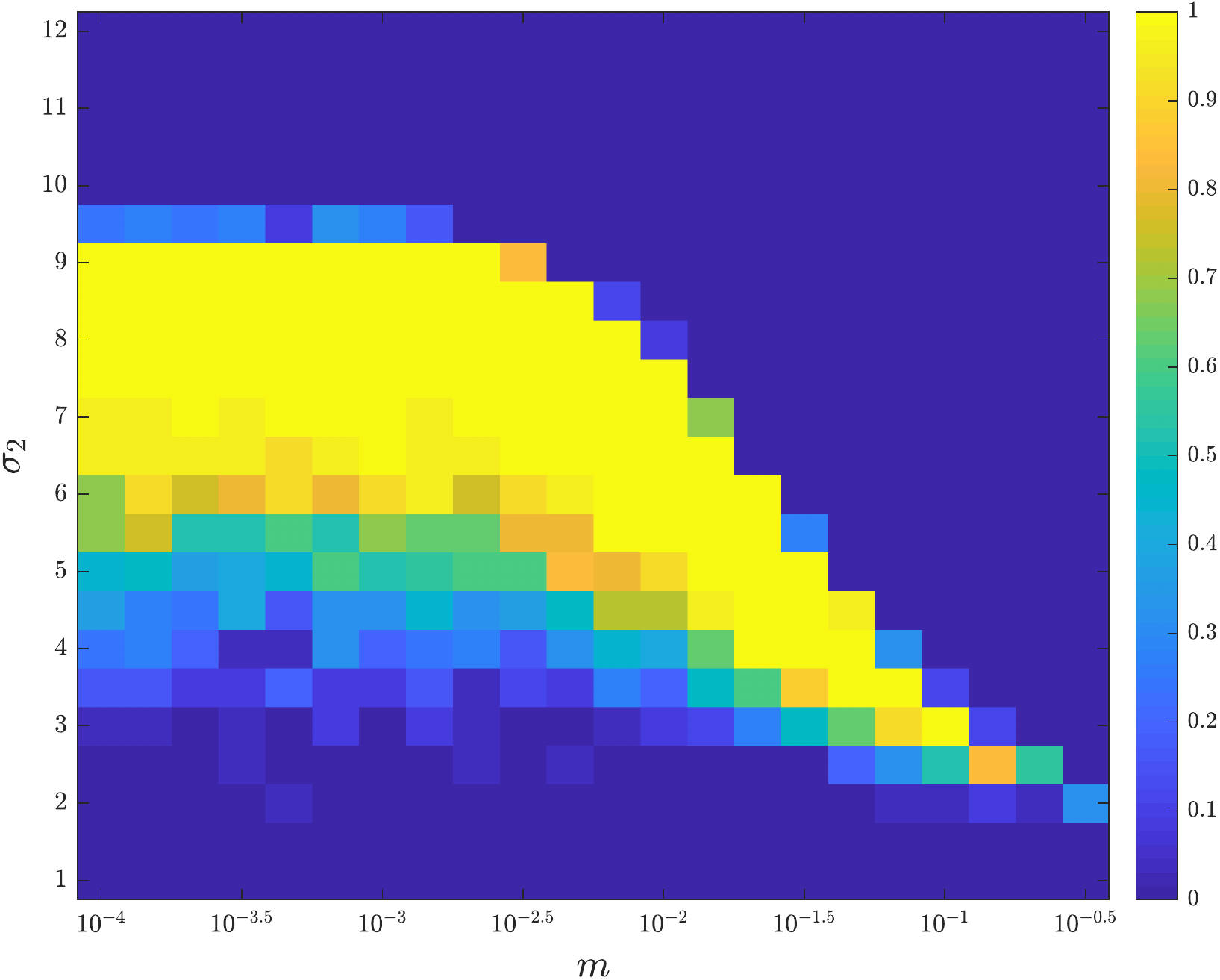}}
\caption{Phase transition diagrams comparing PSO without and with memory effects for different inertia parameters~$m$ and noise coefficients~$\sigma$ (PSO without memory) and~$\sigma_2$ (PSO with memory).
The empirical success probability is computed from $25$ runs and depicted by color from zero (blue) to one (yellow).}
\label{fig:phasediagramsN100}
\end{figure}
In Figure~\ref{fig:phasediagramsN100} we depict the phase diagram comparing the success probability of PSO for different parameter choices of the inertia parameter~$m$ and the diffusion parameter~$\sigma$ or $\sigma_2$, respectively.
We observe that for $m$ fixed there is a noise threshold above which the dynamics explodes. 
While smaller~$m$ permit a larger flexibility in the used noise, they require an individual minimal noise level.
Further numerical experiments suggest however that increasing the number of particles~$N$ allows for a lower minimal noise level.
There are subtle differences between PSO without and with memory, but they are not decisive as in applications also confirmed by the numerical experiments in Section~\ref{sec:MLapplication}, \cite[Section~5.3]{grassi2020particle} as well as the survey paper~\cite[Section~6.3]{grassi2021mean}.

\subsection{A Machine Learning Application} \label{sec:MLapplication}
We now showcase the practicability of PSO as implemented in Algorithm~\ref{algorithm:PSOadvanced} at the example of a very competitive high-dimensional benchmark problem in machine learning, the classification of handwritten digits.
In what follows we train a shallow and a convolutional~NN~(CNN) classifier for the MNIST dataset~\cite{MNIST}.
Let us point out, that it is not our objective to challenge the state of the art by employing the most sophisticated model (deep CNNs achieve near-human performance of more than $99.5\%$ accuracy).
Instead, we want to demonstrate that PSO reaches results comparable to SGD with backpropagation, while at the same time relying exclusively on the evaluation of~$\TE$.

In our experiment we use NNs with architectures as depicted in Figure~\ref{fig:architectures}.
\begin{figure}[!ht]
\centering
\subcaptionbox{\label{fig:shallowNN} Dense shallow NN.}{\vspace{1.6em}\includegraphics[width=0.252\textwidth, trim=0 0 0 0,clip]{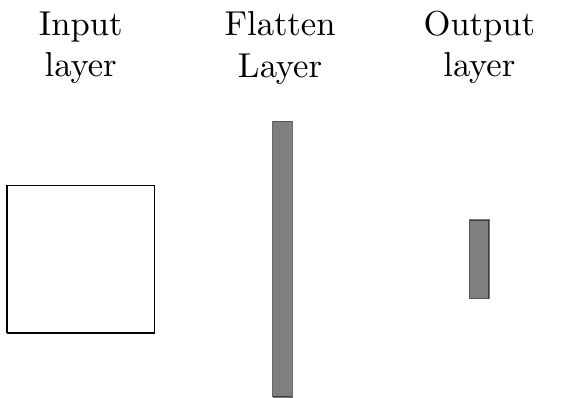}}
\hspace{2em}
\subcaptionbox{\label{fig:CNN} CNN with two convolutional and two pooling layers, and one dense layer.}{\vspace{0.04em}\includegraphics[width=0.66\textwidth, trim=0 0 0 0,clip]{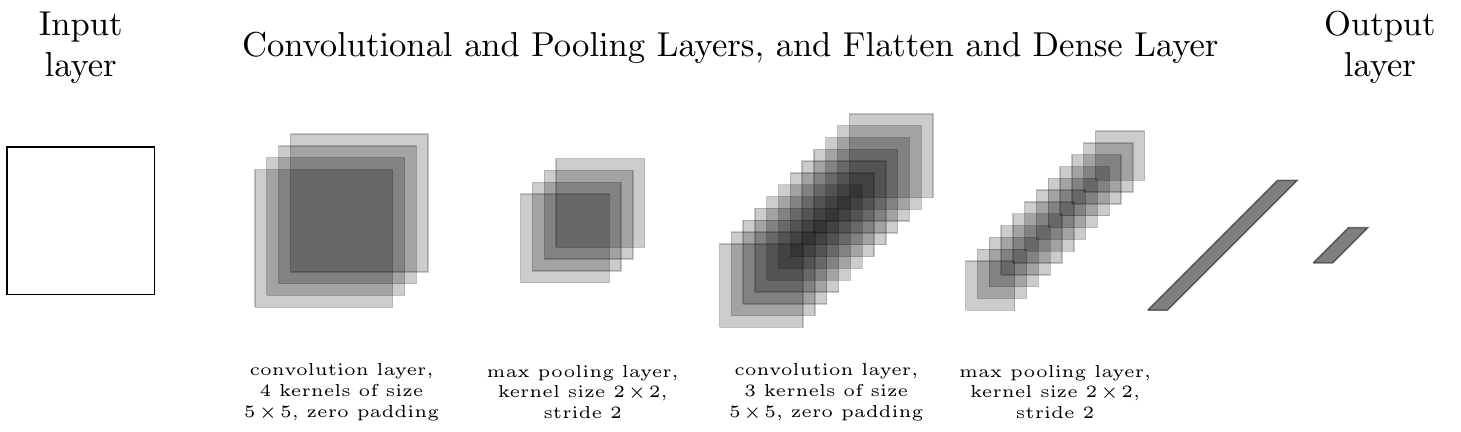}}
\caption{Architectures of the NNs used in the experiments of Section~\ref{sec:MLapplication}, cf.\@~\cite[Section~4]{fornasier2021convergence}.}
\label{fig:architectures}
\end{figure}
The input is a black-and-white image represented by a $(28\times28)$-dimensional matrix with entries between $0$ and $1$.
For the shallow NN (see Figure~\ref{fig:shallowNN}), the flattened image is passed through a dense layer~$\relu(W \cdot+b)$ with trainable weights~$W\in\RR^{10\times728}$ and bias $b\in\RR^{10}$.
Our CNN (see Figure~\ref{fig:CNN}) is similar to LeNet-1, cf.\@~\cite[Section~III.C.7]{lecun1998gradient}.
Each dense or convolution layer has a $\relu$ activation and is followed by a batch normalization layer to speed up the training process.
Eventually, the final layers of both NNs apply a softmax activation function allowing to interpret the $10$-dimensional output vector as a probability distribution over the digits.

We denote by~$\theta$ the trainable parameters of the NNs, which are $7850$ for the shallow NN and $2112$ for the CNN.
They are learned by minimizing~$\TE(\theta) = \frac{1}{M} \sum_{j=1}^M \ell(f_\theta(x^j),y^j)$,
where $f_\theta$ denotes the forward pass of the NN, $(x^j,y^j)$ the $j$th image-label tuple and $\ell$ the categorical crossentropy loss $\ell(\widehat{y},y)=-\sum_{k=0}^9 y_k \log \left(\widehat{y}_k\right)$.
The performance is measured by counting the number of successful predictions on a test set.
We use a train-test split of $60000$ training and $10000$ test images.
For our experiments we choose $\lambda_2=1$, $(\sigma_{2})_{initial}=\sqrt{0.4}$, $\alpha_{initial}=50$, $\Delta t=0.1$ and update the local best position according to Equation~\eqref{eq:local_best update}.
We use $N=100$ agents, which are initialized according to $\mathcal{N}\big((0,\dots,0)^T,\Id\big)$ in position and velocity.
The mini-batch sizes are $n_\TE=60$ and $n_N=100$ (consequently a full update is performed in line~\ref{algorithm:update}) and a cooling strategy is used in line~\ref{algorithm:optionalstrategies}.

Figure~\ref{fig:MNISTresults_differentmemory} reports the performances for different memory settings and fixed $m=0.2$, whereas Figure~\ref{fig:MNISTresults_differentm} depicts the results for different inertia parameters~$m$ in the case of PSO with memory but no memory drift.
\begin{figure}[!ht]
\centering
\subcaptionbox{PSO for three different memory settings: without memory (lightest lines); with memory but without local best drift, i.e., $\lambda_1=0$, $\sigma_1=0$ (line with intermediate opacity); with memory with local best drift $\lambda_1=0.4$, $\sigma_1=\lambda_1\sigma_2$ (darkest lines). \label{fig:MNISTresults_differentmemory}}{\includegraphics[width=0.47\textwidth, trim=28 248 8 266,clip]{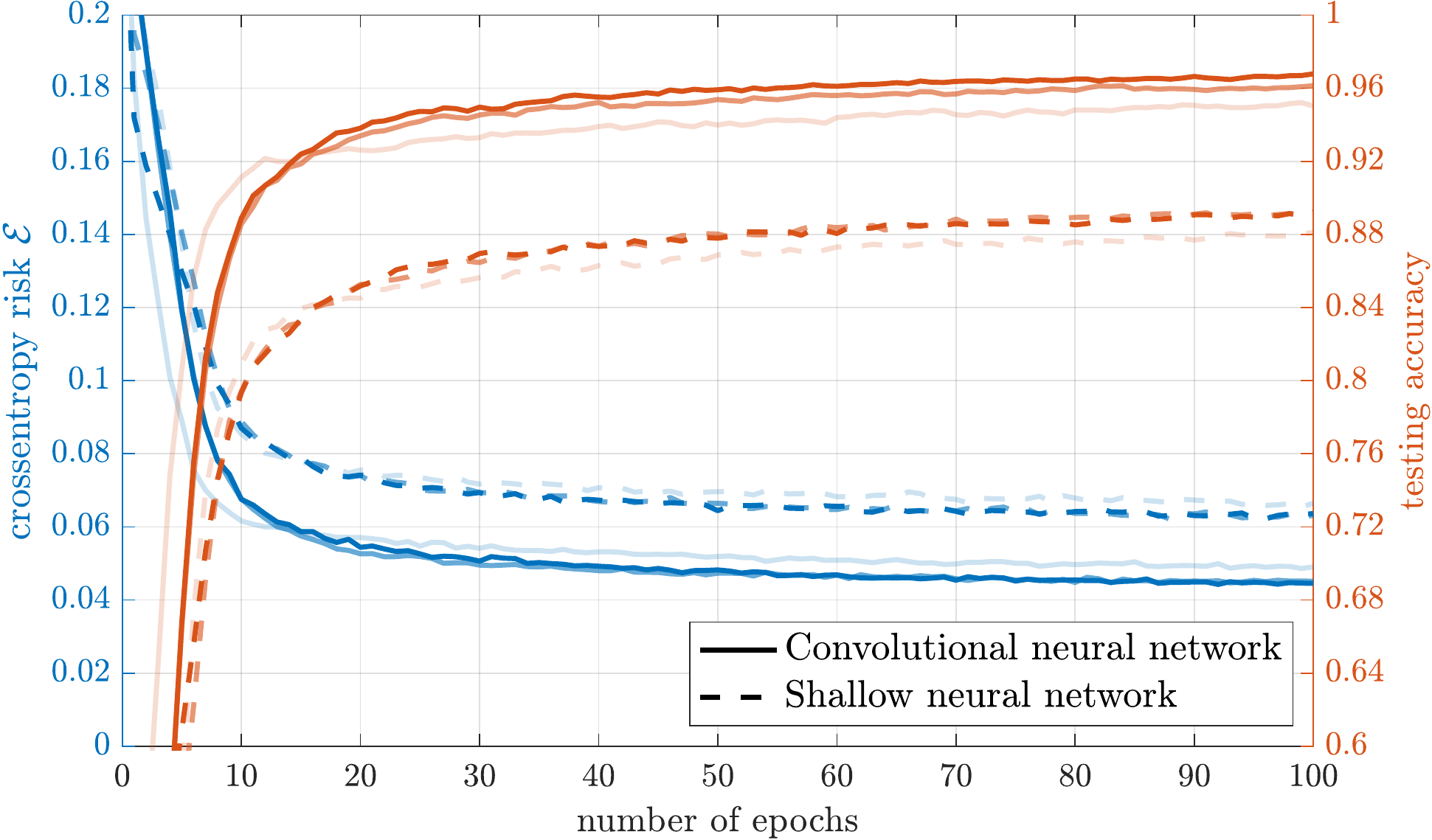}}
\hspace{2em}
\subcaptionbox{PSO with memory but without local best drift for three different inertia parameters $m\in\{0.1,0.2,0.4\}$ (increasing opacity for larger $m$).\\
Note that, for reference, the lines with intermediate opacity coincide with the ones of Figure~\ref{fig:MNISTresults_differentmemory}.\label{fig:MNISTresults_differentm}}{\includegraphics[width=0.47\textwidth, trim=28 248 8 266,clip]{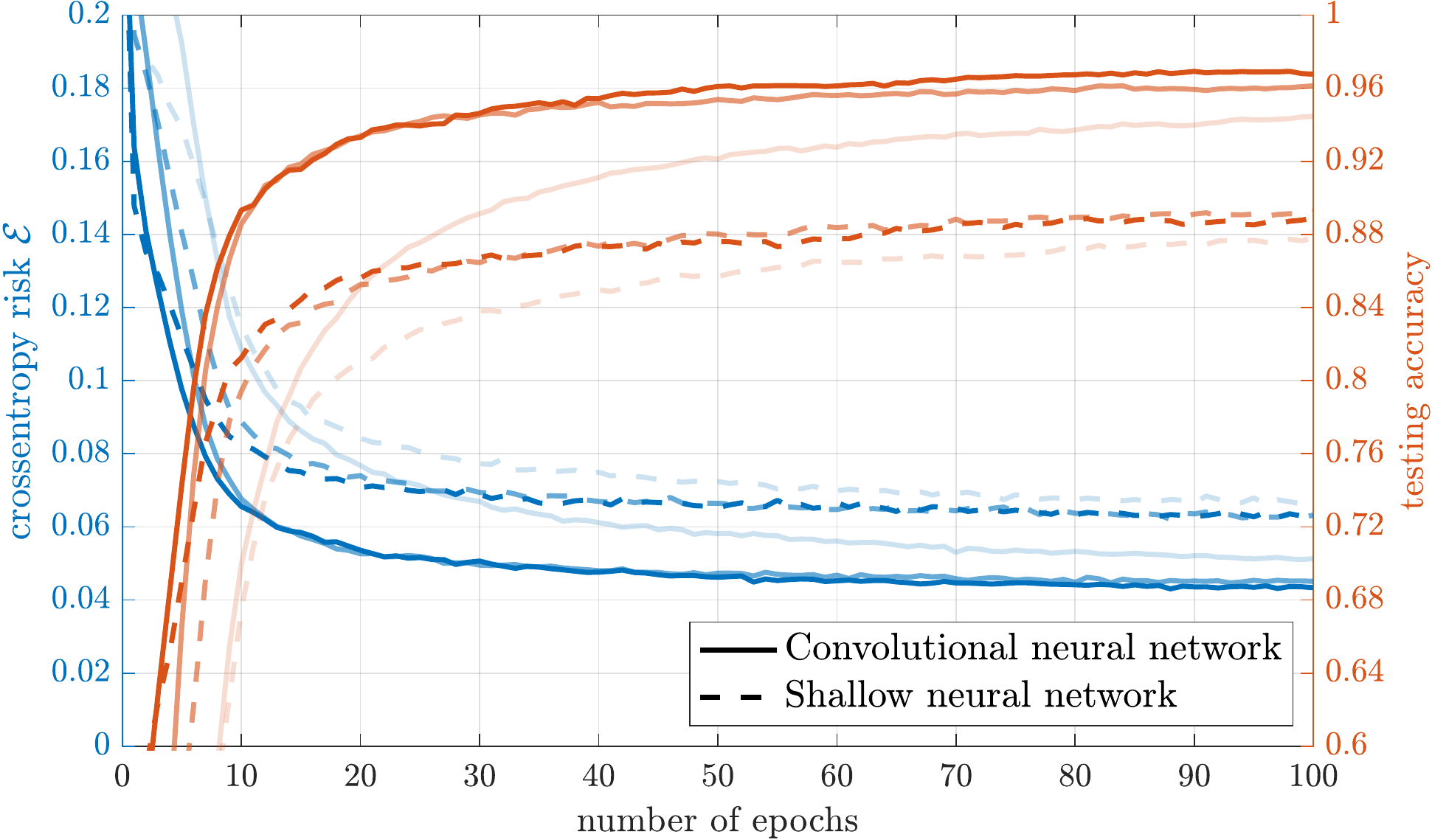}}
\caption{Comparison of the performances of a shallow (dashed lines) and convolutional (solid lines) NN with architectures as described in Figure~\ref{fig:architectures}, when trained with PSO as in Algorithm~\ref{algorithm:PSOadvanced}.
Depicted are the accuracies on a test dataset~(orange lines) and the values of the objective function~$\TE$~(blue lines) evaluated on a random sample of the training set of size $10000$.}
\label{fig:MNISTresults}
\end{figure}
For the shallow NN, we obtain a test accuracy of above $89\%$, whereas the CNN achieves almost $97\%$.
To put those numbers into perspective, when trained with SGD, a similar performance for the shallow NN, see \cite[Figure~7]{carrillo2019consensus}, and a benchmark accuracy of $98.3\%$ for a comparable CNN, cf.\@~\cite[Figure~9]{lecun1998gradient}, are reached.
As can be seen from Figure~\ref{fig:MNISTresults_differentmemory}, the usage of the local best positions when computing the consensus point significantly improves the performance.
The advantage of having an additional drift towards the local best position is less pronounced.
Regarding the inertia parameter~$m$ in Figure~\ref{fig:MNISTresults_differentm}, our numerical results suggest that larger $m$ yield faster convergence.


\section{Conclusions} \label{sec:conclusion}

\noindent
In this paper we prove the convergence of PSO without and with memory effects to a global minimizer of a possibly nonconvex and nonsmooth objective function in the mean-field sense. 
Our analysis holds under a suitable well-preparation condition about the initialization and comprises a rich class of objectives which in particular includes functions with multiple global minimizers.
For PSO without memory effects we furthermore quantify how well the mean-field dynamics approximates the interacting finite particle dynamics, which is implemented for numerical experiments.
Since in particular the latter results does not suffer from the curse of dimensionality, we thereby prove that the numerical PSO method has polynomial complexity.
With this we contribute to the completion of a mathematically rigorous understanding of PSO.
Furthermore, we propose a computationally efficient and parallelizable implementation and showcase its practicability by training a CNN reaching a performance comparable to stochastic gradient descent.

It remains an open problem to extend the mean-field approximation result to the variant of PSO with memory effects or, alternatively, to devise an implementation of such effects compatible with the used proof technique.
Moreover, we also leave a more thorough understanding of the influence of the parameters as well as the influence of memory effects for future, more experimental research.

Finally, we believe that the analysis framework of this and prior works on CBO~\cite{pinnau2017consensus,carrillo2018analytical,fornasier2021consensus} motivates to investigate also other renowned metaheuristic algorithms through the lens of a mean-field limit.

\section*{Acknowledgements}

\noindent
H.H.\@~is partially supported by the Pacific Institute for the Mathematical Sciences~(PIMS) postdoctoral fellowship. J.Q.\@~is partially supported  by the National Science and Engineering Research Council of Canada~(NSERC) and by the start-up funds from the University of Calgary.
K.R.\@~acknowledges the financial support from the Technical University of Munich -- Institute for Ethics in Artificial Intelligence~(IEAI).
The authors gratefully acknowledge the compute and data resources provided by the Leibniz Supercomputing Centre~(LRZ).

\bibliographystyle{abbrv}
\bibliography{conPSO}

\begin{thebibliography}{10}

\bibitem{aarts1989simulated}
E.~Aarts and J.~Korst.
\newblock {\em Simulated annealing and {B}oltzmann machines. A stochastic
  approach to combinatorial optimization and neural computing}.
\newblock Wiley-Interscience Series in Discrete Mathematics and Optimization.
  John Wiley \& Sons, Ltd., Chichester, 1989.

\bibitem{arnold1974stochasticdifferentialequations}
L.~Arnold.
\newblock {\em Stochastic differential equations: {T}heory and applications}.
\newblock Wiley-Interscience [John Wiley \& Sons], New York-London-Sydney,
  1974.
\newblock Translated from the German.

\bibitem{bolley2011stochastic}
F.~Bolley, J.~A. Ca\~{n}izo, and J.~A. Carrillo.
\newblock Stochastic mean-field limit: non-{L}ipschitz forces and swarming.
\newblock {\em Math. Models Methods Appl. Sci.}, 21(11):2179--2210, 2011.

\bibitem{bruned2018weak}
V.~Bruned, A.~Mas, and S.~Wlodarczyk.
\newblock Weak convergence of particle swarm optimization.
\newblock {\em arXiv preprint arXiv:1811.04924}, 2018.

\bibitem{carrillo2018analytical}
J.~A. Carrillo, Y.-P. Choi, C.~Totzeck, and O.~Tse.
\newblock An analytical framework for consensus-based global optimization
  method.
\newblock {\em Math. Models Methods Appl. Sci.}, 28(6):1037--1066, 2018.

\bibitem{carrillo2019consensus}
J.~A. Carrillo, S.~Jin, L.~Li, and Y.~Zhu.
\newblock A consensus-based global optimization method for high dimensional
  machine learning problems.
\newblock {\em ESAIM Control Optim. Calc. Var.}, 27(suppl.):Paper No. S5,
  1--22, 2021.

\bibitem{cipriani2022zero}
C.~Cipriani, H.~Huang, and J.~Qiu.
\newblock Zero-inertia limit: from particle swarm optimization to
  consensus-based optimization.
\newblock {\em SIAM Journal on Mathematical Analysis}, 54(3):3091--3121, 2022.

\bibitem{Clerc2002Theparticleswarm}
M.~Clerc and J.~Kennedy.
\newblock The particle swarm - explosion, stability, and convergence in a
  multidimensional complex space.
\newblock {\em IEEE Transactions on Evolutionary Computation}, 6(1):58--73,
  2002.

\bibitem{Dembo2010}
A.~Dembo and O.~Zeitouni.
\newblock {\em Large deviations techniques and applications}, volume~38 of {\em
  Applications of Mathematics (New York)}.
\newblock Springer-Verlag, New York, second edition, 1998.

\bibitem{ding2021global}
Z.~Ding, S.~Chen, Q.~Li, and S.~Wright.
\newblock On the global convergence of gradient descent for multi-layer resnets
  in the mean-field regime.
\newblock {\em arXiv preprint arXiv:2110.02926}, 2021.

\bibitem{dreo2006metaheuristics}
J.~Dr\'{e}o, A.~P\'{e}trowski, P.~Siarry, and E.~Taillard.
\newblock {\em Metaheuristics for hard optimization}.
\newblock Springer-Verlag, Berlin, 2006.
\newblock Methods and case studies, Translated from the 2003 French original by
  Amitava Chatterjee.

\bibitem{fogel2006evolutionary}
D.~B. Fogel.
\newblock {\em Evolutionary computation. Toward a new philosophy of machine
  intelligence}.
\newblock IEEE Press, Piscataway, NJ, second edition, 2000.

\bibitem{FHPS1}
M.~Fornasier, H.~Huang, L.~Pareschi, and P.~S\"{u}nnen.
\newblock Consensus-based optimization on hypersurfaces: {W}ell-posedness and
  mean-field limit.
\newblock {\em Math. Models Methods Appl. Sci.}, 30(14):2725--2751, 2020.

\bibitem{fornasier2021anisotropic}
M.~Fornasier, H.~Huang, L.~Pareschi, and P.~S{\"u}nnen.
\newblock Anisotropic diffusion in consensus-based optimization on the sphere.
\newblock {\em arXiv preprint arXiv:2104.00420}, 2021.

\bibitem{fornasier2021consensus}
M.~Fornasier, T.~Klock, and K.~Riedl.
\newblock Consensus-based optimization methods converge globally.
\newblock {\em arXiv preprint arXiv:2103.15130}, 2021.

\bibitem{fornasier2021convergence}
M.~Fornasier, T.~Klock, and K.~Riedl.
\newblock Convergence of anisotropic consensus-based optimization in mean-field
  law.
\newblock In J.~L. Jim{\'e}nez~Laredo, J.~I. Hidalgo, and K.~O. Babaagba,
  editors, {\em Applications of Evolutionary Computation}, pages 738--754,
  Cham, 2022. Springer International Publishing.

\bibitem{fornasier2020consensus}
M.~Fornasier, L.~Pareschi, H.~Huang, and P.~Sünnen.
\newblock Consensus-based optimization on the sphere: Convergence to global
  minimizers and machine learning.
\newblock {\em Journal of Machine Learning Research}, 22(237):1--55, 2021.

\bibitem{grassi2021mean}
S.~Grassi, H.~Huang, L.~Pareschi, and J.~Qiu.
\newblock Mean-field particle swarm optimization.
\newblock {\em arXiv preprint arXiv:2108.00393}, 2021.

\bibitem{grassi2020particle}
S.~Grassi and L.~Pareschi.
\newblock From particle swarm optimization to consensus based optimization:
  stochastic modeling and mean-field limit.
\newblock {\em Math. Models Methods Appl. Sci.}, 31(8):1625--1657, 2021.

\bibitem{higham2001algorithmic}
D.~J. Higham.
\newblock An algorithmic introduction to numerical simulation of stochastic
  differential equations.
\newblock {\em SIAM Rev.}, 43(3):525--546, 2001.

\bibitem{holland1992adaptation}
J.~H. Holland.
\newblock {\em Adaptation in natural and artificial systems. An introductory
  analysis with applications to biology, control, and artificial intelligence}.
\newblock University of Michigan Press, Ann Arbor, Mich., 1975.

\bibitem{huang2020mean}
H.~Huang, J.-G. Liu, and P.~Pickl.
\newblock On the mean-field limit for the {V}lasov-{P}oisson-{F}okker-{P}lanck
  system.
\newblock {\em J. Stat. Phys.}, 181(5):1915--1965, 2020.

\bibitem{huang2021mean}
H.~Huang and J.~Qiu.
\newblock On the mean-field limit for the consensus-based optimization.
\newblock {\em Mathematical Methods in the Applied Sciences}, pages 1--18,
  2022.

\bibitem{jabin2017mean}
P.-E. Jabin and Z.~Wang.
\newblock Mean field limit for stochastic particle systems.
\newblock In {\em Active particles. {V}ol. 1. {A}dvances in theory, models, and
  applications}, Model. Simul. Sci. Eng. Technol., pages 379--402.
  Birkh\"{a}user/Springer, Cham, 2017.

\bibitem{jin2020random}
S.~Jin, L.~Li, and J.-G. Liu.
\newblock Random batch methods ({RBM}) for interacting particle systems.
\newblock {\em J. Comput. Phys.}, 400:108877, 1--30, 2020.

\bibitem{kadanoff2009Moreisthesame}
L.~P. Kadanoff.
\newblock More is the same; phase transitions and mean field theories.
\newblock {\em J. Stat. Phys.}, 137(5-6):777--797, 2009.

\bibitem{kennedy1997particle}
J.~Kennedy.
\newblock The particle swarm: social adaptation of knowledge.
\newblock In {\em {Proceedings of 1997 IEEE International Conference on
  Evolutionary Computation}}, pages 303--308. IEEE, 1997.

\bibitem{kennedy1995particle}
J.~{Kennedy} and R.~{Eberhart}.
\newblock Particle swarm optimization.
\newblock In {\em Proceedings of ICNN'95~- International Conference on Neural
  Networks}, volume~4, pages 1942--1948. IEEE, 1995.

\bibitem{lecun1998gradient}
Y.~LeCun, L.~Bottou, Y.~Bengio, and P.~Haffner.
\newblock Gradient-based learning applied to document recognition.
\newblock {\em Proceedings of the IEEE}, 86(11):2278--2324, 1998.

\bibitem{MNIST}
Y.~LeCun, C.~Cortes, and C.~Burges.
\newblock {MNIST} handwritten digit database, 2010.
\newblock \url{http://yann.lecun.com/exdb/mnist/}.

\bibitem{lin2008particle}
S.-W. Lin, K.-C. Ying, S.-C. Chen, and Z.-J. Lee.
\newblock Particle swarm optimization for parameter determination and feature
  selection of support vector machines.
\newblock {\em Expert systems with applications}, 35(4):1817--1824, 2008.

\bibitem{montanare2018Ameanfieldview}
S.~Mei, A.~Montanari, and P.-M. Nguyen.
\newblock A mean field view of the landscape of two-layer neural networks.
\newblock {\em Proc. Natl. Acad. Sci. USA}, 115(33):E7665--E7671, 2018.

\bibitem{miller2006applied}
P.~D. Miller.
\newblock {\em Applied asymptotic analysis}, volume~75 of {\em Graduate Studies
  in Mathematics}.
\newblock American Mathematical Society, Providence, RI, 2006.

\bibitem{zcan1998AnalysisOA}
E.~{\"O}zcan and C.~K. Mohan.
\newblock Analysis of a simple particle swarm optimization system.
\newblock 1998.

\bibitem{panigrahi2011handbook}
B.~K. Panigrahi, Y.~Shi, and M.-H. Lim.
\newblock {\em Handbook of swarm intelligence: concepts, principles and
  applications}, volume~8.
\newblock Springer Science \& Business Media, 2011.

\bibitem{pinnau2017consensus}
R.~Pinnau, C.~Totzeck, O.~Tse, and S.~Martin.
\newblock A consensus-based model for global optimization and its mean-field
  limit.
\newblock {\em Math. Models Methods Appl. Sci.}, 27(1):183--204, 2017.

\bibitem{platen1999introduction}
E.~Platen.
\newblock An introduction to numerical methods for stochastic differential
  equations.
\newblock In {\em Acta numerica}, volume~8 of {\em Acta Numer.}, pages
  197--246. Cambridge Univ. Press, Cambridge, 1999.

\bibitem{poli2009mean}
R.~Poli.
\newblock Mean and variance of the sampling distribution of particle swarm
  optimizers during stagnation.
\newblock {\em IEEE Transactions on Evolutionary Computation}, 13(4):712--721,
  2009.

\bibitem{poli2007particle}
R.~Poli, J.~Kennedy, and T.~Blackwell.
\newblock Particle swarm optimization.
\newblock {\em Swarm intelligence}, 1(1):33--57, 2007.

\bibitem{rastrigin63}
L.~A. Rastrigin.
\newblock The convergence of the random search method in the external control
  of many-parameter system.
\newblock {\em Automation and Remote Control}, 24:1337--1342, 1963.

\bibitem{RevuzYor1999martingales}
D.~Revuz and M.~Yor.
\newblock {\em Continuous martingales and {B}rownian motion}, volume 293 of
  {\em Grundlehren der mathematischen Wissenschaften [Fundamental Principles of
  Mathematical Sciences]}.
\newblock Springer-Verlag, Berlin, third edition, 1999.

\bibitem{randomsearch}
H.~Robbins and S.~Monro.
\newblock A stochastic approximation method.
\newblock {\em Ann. Math. Statistics}, 22:400--407, 1951.

\bibitem{schmitt2015particle}
M.~Schmitt and R.~Wanka.
\newblock Particle swarm optimization almost surely finds local optima.
\newblock {\em Theoret. Comput. Sci.}, 561(part A):57--72, 2015.

\bibitem{sznitman1991topics}
A.-S. Sznitman.
\newblock Topics in propagation of chaos.
\newblock In {\em \'{E}cole d'\'{E}t\'{e} de {P}robabilit\'{e}s de
  {S}aint-{F}lour {XIX} -- 1989}, volume 1464 of {\em Lecture Notes in Math.},
  pages 165--251. Springer, Berlin, 1991.

\bibitem{TW}
C.~Totzeck and M.-T. Wolfram.
\newblock Consensus-based global optimization with personal best.
\newblock {\em Math. Biosci. Eng.}, 17(5):6026--6044, 2020.

\bibitem{van2007analysis}
F.~{v}an~{d}en Bergh.
\newblock {\em An analysis of particle swarm optimizers}.
\newblock PhD thesis, University of Pretoria, 2007.

\bibitem{van2010convergence}
F.~van~den Bergh and A.~P. Engelbrecht.
\newblock A convergence proof for the particle swarm optimiser.
\newblock {\em Fund. Inform.}, 105(4):341--374, 2010.

\bibitem{witt2011theory}
C.~Witt.
\newblock Theory of particle swarm optimization.
\newblock In {\em Theory of randomized search heuristics}, volume~1 of {\em
  Ser. Theor. Comput. Sci.}, pages 197--223. World Sci. Publ., Hackensack, NJ,
  2011.

\bibitem{zhang2015comprehensive}
Y.~Zhang, S.~Wang, and G.~Ji.
\newblock A comprehensive survey on particle swarm optimization algorithm and
  its applications.
\newblock {\em Math. Probl. Eng.}, (931256):1--38, 2015.

\end{thebibliography}


\end{document}